\documentclass[11pt]{amsart}
\usepackage{mathrsfs}
\usepackage{threeparttable}
\usepackage{amsfonts}
\usepackage{amsmath}
\usepackage{amssymb}
\usepackage{enumerate}
\usepackage{latexsym}
\usepackage{color}
\usepackage{enumitem}
\usepackage{enumerate}
\usepackage{makecell}
\usepackage{graphicx} 
\usepackage{hyperref}
\usepackage[nameinlink,noabbrev]{cleveref}  
\usepackage{aliascnt}                       
\usepackage{listings}

\def\a{\alpha}    \def\b{\beta}             
       \def\O{{\rm O}}   \def\Ga{\Gamma}    
\def\Syl{\hbox{\rm Syl}}                
\def\qed{\hfill $\Box$}                    
\def\pf{\noindent{\it Proof.}}

\newcommand\ZZ{\mathrm C}        \newcommand\D{\mathrm{D}}        
     
\newcommand\A{\mathrm{A}}        \newcommand\Sy{\mathrm{S}}       
\newcommand\Aut{\mathrm{Aut}}        
\newcommand\Cay{\mathrm{Cay}}         
\newcommand\K{\mathsf{K}}        \newcommand\soc{\mathrm{soc}}    \newcommand\Core{\mathrm{Core}}

\newcommand\N{\mathbf{N}}                 
    
\newcommand\Om{\mathrm{\Omega}}

    \newcommand\SL{\mathrm{SL}}           \newcommand\AGL{\mathrm{AGL}} 
     
\newcommand\PSL{\mathrm{PSL}}  
                
\newcommand\PSU{\mathrm{PSU}} 
               
\newcommand\Sp{\mathrm{Sp}}    \newcommand\PSp{\mathrm{PSp}}         
            
\newcommand\GammaL{\mathrm{\Gamma L}}           
\newcommand\AGammaL{\mathrm{A\Gamma L}}         
         
\newcommand\POmega{\mathrm{P\Omega}}            
    \newcommand\McL{\mathrm{McL}}   
         
\newcommand\E{\mathrm{E}}      \newcommand\F{\mathrm{F}}       \newcommand\G{\mathrm{G}}
       
\newcommand\M{\mathrm{M}}             \newcommand\Co{\mathrm{Co}}
\newcommand\HS{\mathrm{HS}}

\newtheorem{theorem}{Theorem}[section]
\newtheorem{hypothesis}[theorem]{Hypothesis}

\newtheorem{lemma}[theorem]{Lemma}
\newtheorem{definition}{Definition}[section]

\theoremstyle{definition}

\makeatletter
\def\subsection{\@startsection{subsection}{2}{\z@}%
   {1.5ex\@plus1ex \@minus.2ex}%
   {1.5ex \@plus.2ex}%
   {\normalfont\bfseries}}
\makeatother

\begin{document}
\title[Symmetric Cayley graphs]{Prime-valent Symmetric Cayley Graphs of Characteristically Simple Groups}
\thanks{Corresponding author: Hao Yu}
\thanks{2010 Mathematics Subject Classification. 05C25, 20B25}
\thanks{The project was partially supported by NSF of Guangxi (2025GXNSFAA069013) and the NNSF of China (12571362, 12601653)}
\author[F. Deng, J.J. Li \and H. Yu]{%
Feng Deng, Jing Jian Li and Hao Yu}
\address{}
\address{School of Mathematics \& Guangxi Base, Tianyuan Mathematical Center in Southwest China \& Guangxi Center for Mathematical Research \& Center for Applied Mathematics of Guangxi (GXU), 
  Guangxi University, Nanning, Guangxi 530004, P. R. China. }
\email{dengfeng@st.gxu.edu.cn (F. Deng); lijjhx@gxu.edu.cn (J.J. Li); haoyu@gxu.edu.cn (H. Yu).}

\begin{abstract}
Let $\Ga$ be a connected prime-valent $X$-arc-transitive Cayley graph of a finite characteristically simple group $G\cong T^k$, where $k\geqslant2$. We obtain a precise structural characterization of such graphs and their arc-transitive automorphism groups. In the cubic case, every connected symmetric Cayley graph of $T^k$, where $T$ is a finite nonabelian simple group, is normal.

\vskip 5pt

\noindent {\sc Keywords}. Symmetric graph; Cayley graph; Characteristically simple group

\end{abstract}
\maketitle 

\parskip 5pt

\section{Introduction} \label{sec1}
Throughout this paper, all graphs are finite, simple, undirected and connected. We begin with some background and notation used throughout the paper.

Let $\Ga$ be a graph. Denote by $V(\Ga)$, $E(\Ga)$ and $\Aut(\Ga)$ the vertex set, edge set and full automorphism group of $\Ga$, respectively. For $\a\in V(\Ga)$, let $X_{\a} $ be the stabilizer of $\a$ in $X$ and let $\Ga(\a)$ be the neighborhood of $\a$ in $\Ga$, i.e., the set of all points adjacent to $\a$ in $\Ga$. 
For a positive integer $s$, an \textit{$s$-arc} in $\Ga$ is a sequence of vertices $(\a_0,\a_1,\ldots,\a_s)$ such that $\{\a_{i}, \a_{i+1}\}\in E(\Ga)$ for $0\leqslant i\leqslant s-1$ and $\a_{j-1} \neq \a_{j+1}$ for $1\leqslant j\leqslant s-1$. The graph $\Ga$ is said to be \textit{$(X,s)$-arc-transitive} if $X$ acts transitively on vertices and $s$-arcs of $\Ga$ for $X\leqslant \Aut(\Ga)$, and \textit{$(X,s)$-transitive} if it is $(X,s)$-arc-transitive but not $(X,s+1)$-arc-transitive. In particular, $\Ga$ is simply called \textit{$X$-arc-transitive} or \textit{$X$-symmetric} for $s=1$, and if $X=\Aut(\Ga)$, then $X$ is often omitted.

Let $\Ga$ be an $X$-symmetric graph with $N\unlhd X\leqslant\Aut(\Ga)$. The \textit{normal quotient graph} $\Ga_N$ of $\Ga$ relative to $N$ is defined as the graph with vertices the $N$-orbits in $V(\Ga)$, such that two $N$-orbits $X, Y$ are adjacent (joined by an edge) in $\Ga_N$ if and only if at least one $x\in X$ and at least one $y\in Y$ are adjacent in $\Ga$.
By the above definition, the valency of $\Ga_N$ necessarily divides the valency of the graph $\Ga$, and if the two are equal, then the graph $\Ga$ is called a \textit{normal cover of} $\Ga_N$.

For a finite group $G$ and an inverse-closed subset $S$ of $G$ not containing the identity, i.e., $1\not\in S\subset G$ and $S=S^{-1}:=\{s^{-1}|s\in S\}$, the \textit{Cayley graph} $\Cay(G, S)$ of $G$ is the graph with vertex set $G$ such that two vertices $x$ and $y$ are adjacent if and only if $yx^{-1}\in S$. It is easy to see that $\Cay(G, S)$ is connected if and only if $S$ generates the group $G$. The graph $\Ga=\Cay(G, S)$ is said to be \textit{X-normal} if $G\unlhd X\leqslant\Aut(\Ga)$, in particular, if $X=\Aut(\Ga)$, then $\Ga$ is simply said to be \emph{normal}. Let $\Aut(G, S)=\{x\in\Aut(G)|S^x=S\}$. For $1\in G$, It follows directly from the definition that $\Aut(G, S)\leqslant (\Aut(\Ga))_1$. It is well known that  $\N_{\Aut(\Ga)}(G)=G\rtimes\Aut(G, S)$.

The study of symmetric graphs, which aims to characterize and construct such graphs, has attracted considerable attention in the literature. It traces back to Tutte's work on cubic symmetric graphs in \cite{T1947}, which established that finite 6-arc-transitive graphs do not exist. Further, Weiss extended this result by proving that the only possible values for $s$-transitivity are $s\leqslant 5$ and $s=7$ (\cite{W1981}).
With certain limiting conditions, the classification of symmetric graphs is also well-studied, such as the classification of symmetric graphs of some specific orders: $kp$ (\cite{C1971,C1987,PWX1993,PX1993,WX1993}), where $p\geqslant 3$ is prime and $k$ equals 1 or is a prime distinct from $p$.

A substantial amount of work has been devoted to symmetric Cayley graphs of finite nonabelian simple groups. For small prime valencies, cubic symmetric Cayley graphs were studied in \cite{XFWX2005}, pentavalent symmetric Cayley graphs in \cite{ZF2010}, and heptavalent symmetric Cayley graphs in \cite{LM2021,LMZ2022,ZFYW2025}. The prime-valent problem was considered more generally in \cite{YFZC2021} for solvable vertex stabilizers and in \cite{LLarXiv2021} for nonsolvable vertex stabilizers. Together, these works give rather precise structural characterizations of prime-valent symmetric Cayley graphs of finite nonabelian simple groups.
A natural question arises: What are the properties of symmetric Cayley graphs of characteristically simple groups with prime valency? This paper is devoted to a series of studies on this question. A connected 2-valent graph is a cycle. Thus we assume that $p$ is an odd prime.

For the sake of convenience, we adopt the following conventions in this paper.

\begin{hypothesis}\label{hypothesis}
Let $\Ga=\Cay(G, S)$ be a connected $p$-valent $X$-arc-transitive Cayley graph of finite group $G=T_1\times\cdots\times T_k\cong T^k$, where each $T_i\cong T$ is simple, integer $k\geqslant2$, $p$ is an odd prime and $G<X\leqslant\Aut(\Ga)$. When $X$ is not quasiprimitive on $V(\Ga)$, let $N$ be a maximal intransitive normal subgroup of $X$, $\overline{\a}\in V(\Ga_N)$, $\overline{X}=X/N$, $\overline{G}=GN/N\cong T^\ell$ for some $1\leqslant\ell\leqslant k$ and $\soc(\overline{X})\cong Q^r$, where $Q$ is finite simple group.
\end{hypothesis}

We give a classification of connected prime-valent symmetric Cayley graphs of finite characteristically simple groups, which will be presented in the following theorem.

\begin{theorem} \label{thm:main}
Let $\Ga$ be a graph described in Hypothesis \ref{hypothesis}. If $X$ is quasiprimitive, then either $\Ga\cong\K_{2^k}$, or $G=\soc(X)$ whenever $X$ is type of twisted wreath, and $\Ga$ is isomorphic to a Cayley graph of $\soc(X)\cong\ZZ_2^k$ whenever $X$ is type of affine. Otherwise, $\overline{X}$ is quasiprimitive on $V(\Ga_N)$ and  one of the following holds. 
\begin{enumerate}[leftmargin=1.2em] 
   \item $\Ga_N$ is a Cayley graph of $\overline{G}\cong\ZZ_2^\ell$. 
   \item $\Ga_N$ is a Cayley graph of $\soc(\overline{X})$ with $\overline{X}$ being type of twisted wreath or affine, and for the latter case, $\Ga_N\cong\K_8$. 
   \item $\overline{X}$ is of almost simple type and either $\Ga_N$ is a complete graph, or $\overline{G}=T$ such that either $\overline{G}\unlhd \overline{X}$ or the pair $(\soc(\overline{X}), T)$ is listed in Tables \ref{tab:simple-nonclassical} and \ref{tab:simple-classical-nonsolv}.
   \item $\overline{X}$ is of product action type and either $\overline{G}\unlhd \overline{X}$ or $(Q,\,T,\,r)$ is listed in Table \ref{tab:PA-final}.
\end{enumerate}
\end{theorem}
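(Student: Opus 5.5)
The plan is to split according to whether $X$ is quasiprimitive on $V(\Ga)$, and in each case to run through Praeger's O'Nan--Scott types for quasiprimitive groups.

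\textbf{Quasiprimitive case.} Since $G$ is regular on $V(\Ga)$, we have $|V(\Ga)|=|T|^k$. We go through the eight types in turn.
\begin{itemize}
\item \emph{HS and SD.} Here the socle is $M^l$ with $l\leqslant2$, acting diagonally. A $p$-valent arc-transitive graph has vertex stabilizer of order divisible by $p$ with only small prime divisors, by the Gardiner/Weiss structure of prime-valent stabilizers. Comparing with the diagonal stabilizer $M$ (or $M^{l-1}$), which is too large, should exclude these types. The same stabilizer argument, with the standard results on quasiprimitive arc-transitive graphs (Praeger), excludes HC and CD.
\item \emph{TW.} The socle is regular, and a regular normal subgroup of order $|T|^k$ in a TW group must be isomorphic to $T^k$. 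We then argue that $G=\soc(X)$. Indeed, $G\cap\soc(X)$ is normal in $G$, and a counting/fixed-point argument gives equality, because the point stabilizer of a TW group meets the socle trivially.
\item \emph{HA.} The socle $\ZZ_q^d$ is regular, so $|T|^k$ is a prime power. This is impossible for nonabelian $T$, so $T\cong\ZZ_q$, and $p$-valency forces $q=2$ (by the results on affine $2$-arc structure, or by direct argument). Then $\Ga$ is a Cayley graph of $\soc(X)\cong\ZZ_2^k$. The subcase $\soc(X)\cap G=1$ with both regular, giving $X_\a$ $2$-transitive on $V(\Ga)$, should yield $\K_{2^k}$.
\item \emph{AS and PA.} Here we use the classification of factorizations $X=GX_\a$ with $G\cong T^k$ and $k\geqslant2$. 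The factorizations of almost simple groups (Liebeck--Praeger--Saxl) and of product-action groups should force either a contradiction or $\Ga\cong\K_{2^k}$. In the latter situation $X$ is $2$-transitive of degree $|T|^k$, which by the list of $2$-transitive groups forces $T=\ZZ_2$.
\end{itemize}

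\textbf{Non-quasiprimitive case.} Take $N$ maximal intransitive normal in $X$. By the standard normal-quotient lemma for prime-valent graphs (Lorimer/Praeger), $N$ is semiregular and $\Ga$ is a normal $N$-cover of $\Ga_N$. Hence $\Ga_N$ is $p$-valent and $\overline{X}$-arc-transitive, and $\overline{X}$ is quasiprimitive by maximality of $N$. Because $G$ is regular, $GN/N\cong G/(G\cap N)$ is regular on $V(\Ga_N)$ and is a quotient of $T^k$, so $\overline{G}\cong T^\ell$. We then apply the quasiprimitive analysis above to $(\Ga_N,\overline{X},\overline{G})$, now allowing $\ell=1$.
\begin{itemize}
\item If $\ell\geqslant2$, this gives (1) (affine with $T=\ZZ_2$) or the TW part of (2).
\item If $\ell=1$, then $\overline{G}=T$ is simple and we use the known results on prime-valent symmetric Cayley graphs of simple groups. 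These give either $\overline{G}\unlhd\overline{X}$, or an AS factorization listed in Tables \ref{tab:simple-nonclassical} and \ref{tab:simple-classical-nonsolv} (from \cite{LLarXiv2021,YFZC2021}), or the PA case recorded in Table \ref{tab:PA-final}, or the affine case where $T$ itself is regular. In the affine case $\overline{X}$ is affine and contains a regular nonabelian simple subgroup, and known classification results reduce this to $\Ga_N\cong\K_8$ with $T=\PSL(2,7)$ acting on $8$ points. A complete graph may also arise in the AS case.
\end{itemize}

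\textbf{Main obstacle.} The hardest step is the quasiprimitive AS/PA analysis for $k\geqslant2$, and the PA table when $\ell=1$. This needs the factorization $\overline{X}=\overline{G}\,\overline{X}_{\overline{\a}}$ with a solvable-or-restricted stabilizer of order divisible by $p$. For that step I would first reduce to factorizations of the components $Q$ via the projection of $\overline{G}$ onto coordinates, and then invoke the Liebeck--Praeger--Saxl classification of maximal factorizations.
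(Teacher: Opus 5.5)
\textbf{Main gap: $\overline G$ need not be regular.} In the non-quasiprimitive part you assert that $GN/N\cong G/(G\cap N)$ is regular on $V(\Ga_N)$ because $G$ is regular on $V(\Ga)$. This does not follow. The stabilizer in $G$ of the block $\a^N$ is $G\cap NX_\a$, which has order $|G|/|V(\Ga_N)|=|N|$ since $N$ is semiregular. Only $G\cap N$ acts trivially on the blocks, so $|\overline G_{\overline\a}|=|N:G\cap N|$. Thus $\overline G$ is regular exactly when $N\leqslant G$, and nothing forces that. Your own $\ell=1$ affine conclusion, with $T\cong\PSL_2(7)$ of order $168$ acting on the $8$ vertices of $\K_8$, is incompatible with regularity. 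Likewise, the paper has to \emph{derive} regularity of $T$ in special subcases of Lemma \ref{lem:simple-quotient}. Hence you cannot rerun your quasiprimitive analysis on $(\Ga_N,\overline X,\overline G)$: it uses $|V|=|T|^k$ and exact factorizations $X=GX_\a$ with $G\cap X_\a=1$, whereas here one only has $\overline X=\overline G\,\overline X_{\overline\a}$ with a possibly large intersection.

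This is why your case distribution comes out wrong. For $\ell\geqslant2$, two cases genuinely have to be analysed: the almost simple case, which gives $\Ga_N\cong\K_{p+1}$, and the product action case of Table \ref{tab:PA-final}, where $\ell=r\geqslant2$. For $\ell=1$, product action does not occur at all, since \cite[Theorem 1.2]{L2025} leaves only $\overline X\cong\AGL_3(2)$ or almost simple type. The missing tool is a description of quasiprimitive groups containing a \emph{transitive} (not regular) characteristically simple subgroup. For $\ell\geqslant2$ the paper uses \cite[Theorem 1.3]{DS2020}: the $\SL_3(2)$ argument of Lemma \ref{lem:affine-large}, $\soc(\overline X)\cong\A_m$ in the almost simple case, and the reduction of product action to cases (a)--(c) and then to Lemma \ref{lem:equal-prime-divisors}. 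For $\ell=1$ it uses \cite{L2025} together with the factorization classifications \cite{LX2022,LLarXiv2021}.

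\textbf{The quasiprimitive part lacks its key lemma.} In the twisted wreath case, ``$G\cap\soc(X)\unlhd G$ plus counting'' does not give $G=\soc(X)$. Two regular subgroups of the same order need not coincide, and a priori $G/(G\cap\soc(X))$ could embed in $X/\soc(X)\cong X_\a$. What is needed is Lemma \ref{subgroup} (\cite[Theorem 1.2]{DS2020}): a transitive nonabelian characteristically simple subgroup of a quasiprimitive group with nonabelian socle lies in the socle. This is also the entry point for the almost simple and product action arguments, which your proposal only asserts (``should force''). For simple diagonal type, ``the diagonal stabilizer is too large'' is not an argument, because prime-valent stabilizers can be nonsolvable. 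The paper instead uses that a nonsolvable stabilizer forces $2$-arc-transitivity, and then applies \cite[Theorem 2]{P1993}. For almost simple type with $G\cong\ZZ_2^k$, Liebeck--Praeger--Saxl factorizations do not help. There one needs Li's classification \cite{L2003} of quasiprimitive groups with an abelian regular subgroup to obtain $X\in\{\A_{2^k},\Sy_{2^k}\}$. For nonsolvable $G$, one needs the factorization results \cite{LX2022,LWX2023} to exclude this type.
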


The theorem above provides a structural characterization of prime-valent symmetric Cayley graphs of finite characteristically simple groups. We next consider the cubic case and obtain the following stronger conclusion.

\begin{theorem}\label{cubic}
If $\Ga$ is a connected cubic symmetric Cayley graph of $G\cong T^k$ for $T$ a finite nonabelian simple group and $k\geqslant2$ integer, then $\Ga$ is normal.
\end{theorem}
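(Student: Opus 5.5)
We take $X=\Aut(\Ga)$ and argue by contradiction. So assume $G$ is not normal in $X$. The plan is to apply Theorem \ref{thm:main} with $p=3$ and show that each of its alternatives either forces $G\unlhd X$ or cannot occur.

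\textbf{The quasiprimitive case.} Suppose first that $X$ is quasiprimitive on $V(\Ga)$.
\begin{itemize}
\item Since $T$ is nonabelian, $|V(\Ga)|=|T|^k$ is not a power of $2$. Hence $\Ga\not\cong\K_{2^k}$, and $\Ga$ is not a Cayley graph of $\ZZ_2^k$, so the affine alternative is excluded.
\item In the twisted wreath alternative we have $G=\soc(X)\unlhd X$, which is what we want.
\item Theorem \ref{thm:main} lists only these outcomes for quasiprimitive $X$. So the remaining types (almost simple, product action, and so on) are implicitly ruled out whenever $G\ntrianglelefteq X$.
\end{itemize}

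\textbf{The non-quasiprimitive case: setting up.} Now suppose $X$ is not quasiprimitive, and take $N$ as in Hypothesis \ref{hypothesis}.
\begin{itemize}
\item For cubic arc-transitive graphs, Tutte's theorem gives $|X_\a|\mid 48$, so $X_\a$ is a $\{2,3\}$-group.
\item A normal quotient has valency dividing $3$. Valency $1$ would force $|V(\Ga_N)|\le 2$, contradicting the maximality of $N$ among intransitive normal subgroups. So $\Ga$ is a normal cover of $\Ga_N$, which is cubic and $\overline X$-arc-transitive.
\item By the standard normal cover lemma, $N$ is semiregular and $N=\ker$ of the action on $V(\Ga_N)$.
\item Since $\overline G\cong T^\ell$ is nonabelian, alternative (1) of Theorem \ref{thm:main} is out. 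Alternative (2) in the affine case gives $\Ga_N\cong\K_8$, which is not cubic. A complete cubic quotient would be $\K_4$, of order $4$, but $|V(\Ga_N)|$ is divisible by $|T|/|\overline X_{\overline\a}\cap\cdots|$, which leads to a contradiction in both branches.
\item So we are in the twisted wreath case, or in (3) or (4) with $\overline G=T$ (respectively $\overline G$) normal in $\overline X$, or in one of the listed exceptional pairs.
\end{itemize}

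\textbf{The non-quasiprimitive case: the exceptional tables.} For the entries of Tables \ref{tab:simple-nonclassical}, \ref{tab:simple-classical-nonsolv} and \ref{tab:PA-final}, the vertex stabilizer $\overline X_{\overline\a}$ must be a $\{2,3\}$-group of order dividing $48$. It also satisfies $|\soc(\overline X)|=|T|\cdot|\soc(\overline X)_{\overline\a}|$. Checking the tables, every entry requires a stabilizer of order divisible by a prime at least $5$, or of order exceeding $48$; for example, $(\A_n,\A_{n-1})$ needs a stabilizer of order $n$. Such entries therefore do not occur for $p=3$. This table check is the main computational step. I would do it by consulting the factorization data $\soc(\overline X)=T\cdot \soc(\overline X)_{\overline\a}$, which requires a $\{2,3\}$-group factor of order at most $48$. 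The known list of such factorizations with a nonabelian simple factor (Li–Xia) leaves essentially $\A_n=\A_{n-1}\A_n\cap\ldots$ with $n\mid 48$, together with small exceptions, and these can be eliminated directly.

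\textbf{Lifting normality back to $\Ga$.} We are left with $\overline G\unlhd\overline X$, so $GN\unlhd X$ and $GN=N\rtimes G$ acts regularly-by-semiregularity. This lifting is the main obstacle. We have $|GN|=|G||N|$ and $GN$ is transitive with stabilizer $N_\a\cdot$ of order $|N|$. Since $N$ is semiregular, $GN$ has stabilizers of order $|N|$, so $|N|$ divides $48$.
\begin{itemize}
\item The group $C=\mathbf C_{GN}(N)$ is normal in $X$.
\item Because $\Aut(N)$ is solvable or small, $G\cap N=1$ and $T$ is nonabelian. Hence $G\le C$ up to a solvable piece, so $G=C^{(\infty)}$ is the solvable residual of $C$.
\item As $C\unlhd X$, this makes $G$ characteristic in $C$, and hence $G\unlhd X$.
\end{itemize}
If $N$ is nonsolvable, its order divides $48$, which is impossible. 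This contradicts the assumption that $G$ is not normal in $X$, and shows $\Ga$ is normal.
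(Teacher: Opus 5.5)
There is a genuine gap, and it sits in the one case that makes the theorem nontrivial. \textbf{The table check fails.} Take row 1 of Table \ref{tab:simple-nonclassical} with $n=48$: $\soc(\overline X)=\A_{48}$, $T=\A_{47}$, and $L_{\overline\a}\cong\Sy_4\times\ZZ_2$ acting regularly on the $48$ points. This stabilizer is a $\{2,3\}$-group of order exactly $48$, so it is compatible with Tutte's bound; it is the $5$-arc-regular cubic stabilizer. Moreover $|\A_{48}|=|\A_{47}|\cdot 48$, so $T$ is regular on $V(\Ga_N)$. The case is not hypothetical: these are the two non-normal cubic $5$-arc-regular Cayley graphs of $\A_{47}$ with automorphism group $\A_{48}$, the known exception for simple groups. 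So your claim that the entries with $n\mid 48$ ``can be eliminated directly'' is false for $n=48$. Order arguments cannot touch this case, because it really occurs as a quotient $\Ga_N$ with $\ell=1$. Theorem \ref{thm:main} explicitly allows this configuration, so it cannot by itself yield the statement. The actual content of the theorem is to show that no cubic symmetric Cayley graph of $T^k$, $k\geqslant2$, normally covers such a graph, and your proposal gives no argument for that.

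\textbf{The lifting step is also wrong.} Transitivity of $GN$ gives $|(GN)_\a|=|N:G\cap N|$, so only $|N:G\cap N|$ divides $48$, not $|N|$. Moreover $G\cap N\cong T^{k-\ell}$ is nontrivial whenever $\ell<k$, which always happens when $\ell=1$. So $G\cap N=1$ fails in general and $N$ is usually nonsolvable. Also $G$ does not centralize $N$, since it contains the nonabelian group $G\cap N\leqslant N$, so the solvable-residual argument via $\mathbf C_{GN}(N)$ has nothing to stand on. The same obstruction resurfaces here: nothing in your reasoning excludes a section $\A_{48}$ of $N$ containing a factor $\A_{47}$ of $G\cap N$ with index $48$. (Minor point: a two-vertex quotient is excluded because $G$ has no subgroup of index $2$, not by maximality of $N$.)

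The paper avoids both problems by using $K=\Core_A(G)$ instead of a maximal intransitive normal subgroup. Then normality is literally $K=G$ and no lifting is needed. If $K<G$, Lemma \ref{NQ} makes $\Ga_K$ a core-free cubic $A/K$-symmetric Cayley graph of $G/K$. The Li--Lu classification then forces $G/K\cong\A_{47}$ and $A/K\cong\A_{48}$. Next, $A=K\times\mathbf C_A(K)$ with $\mathbf C_A(K)\cong\A_{48}$ and $|A_\a|=48$. Hence $\Ga$ is $(A,5)$-arc-regular and $A$ is a quotient of the universal group $G_5$. Projecting onto a factor $T_1\cong\A_{47}$ of $K$ (which exists because $k\geqslant2$) gives an epimorphism $G_5\to\A_{47}$, which the Magma computation rules out. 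That final step, killing the $\A_{47}<\A_{48}$ configuration, is the missing idea.
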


After this introductory section, some notation, basic definitions and useful facts will be given in Section \ref{sec2}. The quasiprimitive case will be treated in Section \ref{sec3}, while the non-quasiprimitive case will be considered in Section \ref{sec4}. Theorems \ref{thm:main} and \ref{cubic} will be proved in Section \ref{sec5}.

\section{Preliminaries}\label{sec2}

The notation and terminology used in the paper are standard and can be found in \cite{PS2018}. For example, we use $T^k$ to denote the direct product of $k$ copies of group $T$. For a group $X$ and a prime $r$, denote the largest normal $r$-subgroup of $X$ by $\O_r(X)$. For a positive integer $n$ and a prime $p$, let $v_p(n)$ denote the largest nonnegative integer $a$ such that $p^a\mid n$, and let $\Pi(n)$ denote the set of prime divisors of $n$.
\subsection{groups}
We recall the following from \cite[Definition 2.2]{DS2020} regarding subgroups of direct products:
\begin{definition} \label{defi}
Let $M_1,\,\ldots,\,M_r$ be groups and $N=M_1\times\cdots\times M_r$ and $H\leqslant N$. Consider, for $i\in \{1,\ldots,r\}$, the projections $\pi_i: N\to M_i$ such that $(x_1,\ldots,x_r)\mapsto x_i$ for $x_i\in M_i$ and $1\leqslant i\leqslant r$. Then $H$ is called a {\it strip} of $N$ if $H\neq1$, for each $i$, either the restriction of $\pi_i$ to $H$ is injective or $\pi_i(H)=1$. Set $\mathcal{P}(H)=\{M_i \mid \pi_i(H)\neq1\}$. Two strips $H$ and $K$ are said to be {\it disjoint} if $\mathcal{P}(H)\cap\mathcal{P}(K)=\emptyset$. $H$ is a {\it subdirect subgroup} of $N$ if $\pi_i(H)=M_i$ for each $i$. $H$ is a {\it diagonal subgroup} of $N$ if the restriction of $\pi_i$ to $H$ is injective for each $i$. Further, $H$ is a {\it full diagonal subgroup} of $N$ if $H$ is both a subdirect and a diagonal subgroup of $N$. 
\end{definition}

For a permutation group $X$ acting on $\Om$, it is \textit{quasiprimitive} if its every nontrivial normal subgroup is transitive on $\Om$. The following is a property of quasiprimitive groups.

\begin{lemma}{\cite[Theorem 1.2]{DS2020}}\label{subgroup}
Let $X$ be a finite quasiprimitive permutation group acting on $\Om$ with nonabelian socle and let $G$ be a transitive nonabelian characteristically simple subgroup of $X$. Then $G\leqslant \soc(X)$.
\end{lemma}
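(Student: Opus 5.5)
Since $\soc(X)$ is nonabelian, I would set $M=\soc(X)\cong Q^r$ with $Q$ nonabelian simple, and aim to show $G\cap M=G$. The subgroup $G\cap M$ is normal in $G\cong T^k$. A normal subgroup of a direct power of a nonabelian simple group is a product of some of the simple direct factors. Hence $G\cap M\cong T^{k-j}$ and $GM/M\cong G/(G\cap M)\cong T^j$ for some $0\leqslant j\leqslant k$. The goal is to prove $j=0$, arguing by contradiction from $j>0$.

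\emph{Step 1: locating $GM/M$ inside $X/M$.} Quasiprimitivity forces $C_X(M)=1$, or (types HS, HC, SD twisted) $M$ is the product of two minimal normal subgroups interchanged by $X$. In either case $X$ embeds in $\Aut(M)$ up to this controlled extension, so $X/M$ embeds in $\mathrm{Out}(Q)\wr \Sy_r$, possibly extended by a $\ZZ_2$ swap.
\begin{itemize}
\item By the Schreier conjecture, $\mathrm{Out}(Q)^r$ is solvable. A nonabelian characteristically simple section cannot lie in a solvable group, so $GM/M\cong T^j$ acts faithfully on the $r$ simple factors of $M$.
\item In particular, in the almost simple case ($r=1$) we already get $j=0$.
\end{itemize}

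\emph{Step 2: using transitivity of $G$.} Transitivity gives the factorization $X=GX_\a$, and $M$ is also transitive. I would go through the O'Nan--Scott types HA (excluded), HS, HC, SD, CD, TW and PA. In each type $M_\a$ has a known shape:
\begin{itemize}
\item $M_\a=1$ for TW;
\item a full diagonal subgroup for SD;
\item a product of diagonals for CD and HC;
\item a subdirect-type product $R^r$ for PA.
\end{itemize}
In each case, compare $|\Om|=|G:G_\a|$ with $|M:M_\a|$. Then consider the subgroup $G_1=G\cap M$, i.e. the product of the factors of $G$ lying in $M$. The complementary factors $T^j$ act by permuting the simple factors of $M$ and centralise $G_1$, which forces $G_1$ into the corresponding fixed or diagonal part of $M$. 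The aim is to show that $G\cap M$ is then too small to be transitive on the $M$-blocks. More precisely, the $p$-parts (for a prime $p$ dividing $|Q|$ suitably chosen) of $|G_1G_\a|$ cannot reach $|M:M_\a|$, contradicting $|\Om|=|G:G_\a|$.

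\emph{Main obstacle.} I expect the hard part to be the PA and CD cases. There $M_\a$ can be large and $X/M$ can contain a transitive subgroup of $\Sy_r$ isomorphic to a quotient of $T^j$, so a naive order count does not immediately contradict. For these cases I would first reduce to the component: project onto a block of factors of $M$ stabilised by a point stabiliser in the permutation action, to obtain an almost simple or diagonal quasiprimitive component group. I would then apply the almost simple case (already settled) together with the factorization $X=GX_\a$ restricted to that component. Finally, I would lift the conclusion back via the strip structure of Definition~\ref{defi} to show that each factor of $G$ projects injectively into $M$, so that $G\leqslant M$.
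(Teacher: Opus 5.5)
The paper gives no proof of this statement; it is quoted from \cite[Theorem 1.2]{DS2020}, so you are in effect reproving that theorem from scratch. Your Step 1 is sound and settles the almost simple case $r=1$. The HS/HC caveat is unnecessary: a nontrivial $C_X(M)\unlhd X$ would contain a minimal normal subgroup of $X$, hence lie in $Z(M)=1$. So $X\leqslant\Aut(Q)\wr\Sy_r$ always, and by Schreier the product $G_2$ of the simple factors of $G$ not contained in $M$ embeds in $\Sy_r$. For $r\geqslant2$, however, what you write is a plan rather than a proof, and its decisive step does not work as stated.

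The only constraint you actually extract is $G_1=G\cap M\leqslant C_M(G_2)$. This gives $|G_1|\leqslant|Q|^s$, where $s$ is the number of $G_2$-orbits on the factors of $M$, together with $|G_2|$ dividing $\prod_O|O|!$. Nothing forces $G\cap M$ itself to be transitive, so any contradiction must come from $v_p(|\Om|)\leqslant s\,v_p(|Q|)+v_p(|G_2|)$.

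With $p=2$ this does finish the TW and SD types. There $|\Om|=|Q|^r$, resp. $|Q|^{r-1}$, while $v_2(|G_2|)\leqslant r-s$, $v_2(|Q|)\geqslant2$, and $r-s\geqslant4$ if $G_2\neq1$. But the count is inconclusive precisely in the types you flag:
\begin{itemize}
\item In PA type, $|\Om|=|M:M_\a|$ can have tiny $p$-parts. For example, take $Q=\A_m$ and $M_\a=\A_{m-1}^r$, so $|\Om|=m^r$. Already $v_p(|\A_m|)$ may exceed $r\,v_p(m)$, so $s=1$ is compatible with the count.
\item In CD and HC types, $|\Om|=|Q|^{r-r/t}$ resp. $|Q|^{r/2}$, and the inequality holds as soon as $s\geqslant r-r/t$ resp. $s\geqslant r/2$.
\end{itemize}
Here genuinely structural input is needed. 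In \cite{DS2020} this is the analysis of factorisations of $Q$ and of strips; that is where the exceptional pairs involving $\A_6$, $\M_{12}$, $\POmega_8^+(q)$ and $\Sp_4(2^a)$, quoted in the proof of Lemma \ref{product action}, come from.

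Your proposed remedy, reducing to a component and invoking the almost simple case, does not apply as stated. The part of $G$ acting on the component is $N_G(Q_1)=G_1\times N_{G_2}(Q_1)$. Here $N_{G_2}(Q_1)$ is a point stabiliser in a transitive action of $T^j$, so this group is not characteristically simple in general, and Step 1 does not apply to it. The final ``lift back via strips'' step is also left unspecified. Since the paper invokes this lemma exactly in product action type (Lemmas \ref{product action} and \ref{lem:product-action}), the missing case is the essential one.
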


For normal subgroup of a direct product of nonabelian simple groups, we obtain the following simple result. 
\begin{lemma}\label{obs} \cite[Theorem 4.16]{PS2018}
Let $G$ be a characteristically simple group say $G=T^k$ for some nonabelian simple group $T$ and $k\geqslant2$ integer, and $K\unlhd G$. Then $K=T^l$ for $l\leqslant k$.
\end{lemma}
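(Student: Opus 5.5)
The statement to prove is Lemma~\ref{obs}: if $G=T^k$ with $T$ nonabelian simple and $K\unlhd G$, then $K\cong T^l$ for some $l\leqslant k$. More precisely, I would show that $K=\prod_{i\in I}T_i$ for some subset $I\subseteq\{1,\ldots,k\}$, where $G=T_1\times\cdots\times T_k$. The plan is to compare $K$ with each simple direct factor $T_i$ in turn, and then argue that $K$ is exactly the product of the factors it contains.

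\textbf{Step 1 (each factor lies in $K$ or centralizes it).} For each $i$, the subgroup $[K,T_i]$ is contained in $K\cap T_i$, because both $K$ and $T_i$ are normal in $G$. Since $K\cap T_i$ is normal in $T_i$ and $T_i$ is simple, either $K\cap T_i=T_i$ or $K\cap T_i=1$. In the first case $T_i\leqslant K$. In the second case $[K,T_i]=1$, so $K$ centralizes $T_i$.

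\textbf{Step 2 (a factor centralized by $K$ is projected trivially).} Let $\pi_i$ be the projection onto $T_i$, as in Definition~\ref{defi}. Every element of $T_i$ commutes with $x$ exactly when it commutes with $\pi_i(x)$. Hence if $K$ centralizes $T_i$, then $\pi_i(K)\leqslant Z(T_i)$. Since $T_i$ is nonabelian simple, $Z(T_i)=1$, so $\pi_i(K)=1$.

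\textbf{Step 3 (conclusion).} Let $I=\{i : T_i\leqslant K\}$. By Steps 1 and 2, every element of $K$ has trivial coordinates outside $I$, so $K\leqslant\prod_{i\in I}T_i$. The reverse inclusion $\prod_{i\in I}T_i\leqslant K$ holds by the definition of $I$. Therefore $K=\prod_{i\in I}T_i\cong T^{|I|}$, which gives the lemma with $l=|I|\leqslant k$. The cases $K=1$ (that is, $l=0$) and $K=G$ are included.

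No step is a genuine obstacle. The only point that needs care is Step 2, where one must use that $T$ is nonabelian. If $T$ were abelian, the argument would fail, since for example a diagonal subgroup of $\ZZ_p^2$ is normal but is not a product of factors. This nonabelian hypothesis holds in every application of the lemma in the paper.
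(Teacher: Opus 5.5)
Your argument is correct and complete. The paper gives no proof of its own here, only the citation [PS2018, Theorem 4.16], and your centralizer argument is the standard proof of that result: each $T_i$ either lies in $K$ or is centralized by $K$, and in the latter case it is projected trivially because $Z(T_i)=1$.

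You in fact prove the sharper statement $K=\prod_{i\in I}T_i$ for a subset $I$ of the given factors. This is the form the paper actually uses later. It is what gives $\overline{G}\cong G/(G\cap N)\cong T^{\ell}$ in Lemma~\ref{lem:quotient-reduction}, and what justifies writing $K=T_1\times\cdots\times T_r$ after relabeling in the proof of Theorem~\ref{cubic}. The bare isomorphism $K\cong T^{l}$ in the lemma as stated would not by itself give either of these.
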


Let $q>1$ and $n>1$ be integers. A prime $z$ is called a \emph{primitive prime divisor} of $q^n-1$ if $z\mid q^n-1$ but $z\nmid q^i-1$ for every $1\leqslant i<n$. 
\begin{lemma}\label{lem:ppd} \cite[Theorem IX.8.3(Zsigmondy's theorem)]{HB1982}
$q^n-1$ has a primitive prime divisor except $(q,n)=(2,6)$, or $n=2$ and $q+1$ is a power of $2$. In particular, if $z$ is a primitive prime divisor of $q^n-1$, then $n\mid z-1$. 
\end{lemma}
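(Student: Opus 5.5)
The final assertion is elementary, so I would dispose of it first. If $z$ is a primitive prime divisor of $q^n-1$, then $z\nmid q$ (otherwise $z\nmid q^n-1$), so $q$ defines an element of $(\mathbb{Z}/z\mathbb{Z})^\times$. Its multiplicative order $d$ divides $n$, since $q^n\equiv 1\pmod z$. By primitivity $z\nmid q^i-1$ for $1\leqslant i<n$, so $d=n$. Lagrange's theorem in the cyclic group $(\mathbb{Z}/z\mathbb{Z})^\times$ of order $z-1$ then gives $n\mid z-1$.

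For the existence statement I would follow the standard cyclotomic argument. Let $\Phi_n(x)$ be the $n$-th cyclotomic polynomial, so $q^n-1=\prod_{d\mid n}\Phi_d(q)$. The first step is the arithmetic lemma: if a prime $z$ divides $\Phi_n(q)$ and has $\mathrm{ord}_z(q)=d<n$, then $n=dz^a$ with $a\geqslant 1$, so $z\mid n$. Moreover $z$ must be the largest prime divisor of $n$, because $d\mid z-1$. Lifting-the-exponent then shows $v_z(\Phi_n(q))=1$ whenever $n>2$; the case $z=2$, $n=2$ is genuinely exceptional, since $\Phi_2(q)=q+1$. Consequently, if $q^n-1$ has no primitive prime divisor, then every prime factor of $\Phi_n(q)$ is non-primitive. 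For $n>2$ this forces $\Phi_n(q)\in\{1,z\}$ with $z$ the largest prime divisor of $n$, and in particular $\Phi_n(q)\leqslant n$.

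The second step is a size estimate contradicting this. Writing $\Phi_n(q)=\prod(q-\zeta)$ over the primitive $n$-th roots of unity $\zeta$, each factor satisfies $|q-\zeta|\geqslant q-1$, with strict inequality for $n\geqslant 2$. Hence $\Phi_n(q)>(q-1)^{\varphi(n)}$, and this exceeds $n$ except for small $q$. The case $q=2$ needs a finer bound: pairing conjugate roots gives $|2-\zeta|^2=5-4\cos\theta$. The estimate $\Phi_n(2)\geqslant 2^{\varphi(n)}/\prod_{p\mid n}(\text{small correction})$ leaves only finitely many $n$ to check by hand, and among these only $n=6$ fails, via $\Phi_6(2)=3$.

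Finally, for $n=2$ the condition reads $q+1=\Phi_2(q)$. This has a primitive prime divisor unless every prime divisor of $q+1$ also divides $q-1$, i.e. unless $q+1$ is a power of $2$, which is the stated exception. I expect the main obstacle to be the careful $q=2$ size estimate, including the bookkeeping of small $n$, rather than the arithmetic lemma.
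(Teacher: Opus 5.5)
The paper does not prove this lemma; it only quotes Zsigmondy's theorem from Huppert--Blackburn, and the proof there is the same cyclotomic argument you outline. The following parts of your plan are correct:
\begin{itemize}
\item your proof of $n\mid z-1$;
\item the arithmetic lemma, that a non-primitive prime divisor of $\Phi_n(q)$ must be the largest prime divisor $P(n)$ of $n$, and for $n>2$ divides $\Phi_n(q)$ exactly once;
\item your treatment of the case $n=2$.
\end{itemize}

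The one step that is not yet an argument is the size estimate for $q=2$, which is exactly where the exception $(2,6)$ arises. The pairing $|2-\zeta|^2=5-4\cos\theta$ on its own only gives $|2-\zeta|>1$, and the ``small correction'' is never specified. A clean way to close this is the M\"obius formula:
\[
\Phi_n(q)=\prod_{d\mid n}(q^d-1)^{\mu(n/d)}=q^{\varphi(n)}\prod_{d\mid n}(1-q^{-d})^{\mu(n/d)}>q^{\varphi(n)}\prod_{d\geqslant1}(1-q^{-d}).
\]
To get the inequality, discard the factors with $\mu(n/d)=-1$, each of which exceeds $1$. Then insert the missing factors $1-q^{-d}<1$ for all other $d\geqslant1$. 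For $q=2$ we have $\prod_{d\geqslant1}(1-2^{-d})\geqslant\frac12\bigl(1-\sum_{d\geqslant2}2^{-d}\bigr)=\frac14$, so $\Phi_n(2)>2^{\varphi(n)-2}$.

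It also pays to compare with $P(n)$ rather than with $n$. Since $P(n)-1=\varphi(P(n))$ divides $\varphi(n)$, you get $P(n)\leqslant\varphi(n)+1$.
\begin{itemize}
\item For $q\geqslant3$ this gives $\Phi_n(q)>2^{\varphi(n)}\geqslant2^{P(n)-1}\geqslant P(n)$. So there are no ``small $q$'' exceptions at all. Your comparison with $n$ lets $(q,n)=(3,6)$ slip through, though $\Phi_6(3)=7$ is harmless.
\item For $q=2$ and $n>2$, the inequality $2^{\varphi(n)-2}<\varphi(n)+1$ forces $\varphi(n)\in\{2,4\}$, that is, $n\in\{3,4,5,6,8,10,12\}$. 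The corresponding values of $\Phi_n(2)$ are $7,5,31,3,17,11,13$. Only $\Phi_6(2)=3=P(6)$ fails to exceed $P(n)$.
\end{itemize}

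With this bound inserted, your outline is a complete proof.
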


Using the classification of nonabelian simple groups with the same set of prime divisors, we obtain the following result.

\begin{lemma}\label{lem:equal-prime-divisors}
Let $Q$ and $T$ be finite nonabelian simple groups with $T<Q$ and $\Pi(|T|)=\Pi(|Q|)$. Suppose that there exist an integer $r\geqslant2$ and a nontrivial subgroup $R\leqslant Q^r$ with $R\lesssim Q$ such that $|Q|^r=|T|^r|R|/|T^r\cap R|$ and $|R|<|Q|$. Then $(Q,T,r)$ is listed in Tables \ref{tab:2.1} and \ref{tab:2.2}.
\end{lemma}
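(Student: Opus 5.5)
The plan is to turn the order equation into arithmetic constraints and then use the classification of pairs of nonabelian simple groups with equal prime sets.

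\textbf{Step 1: arithmetic reduction.} Set $D=T^r\cap R$. The hypothesis $|Q|^r=|T|^r|R|/|D|$ rewrites as
\[
  |Q:T|^r=|R:D|.
\]
Since $|R|<|Q|$, we get
\[
  |Q:T|^r\leqslant |R|<|Q|.
\]
Because $r\geqslant 2$, this forces $|Q:T|^2<|Q|$, that is, $|T|>|Q|^{1/2}$, so $T$ is a large subgroup of $Q$. Moreover $|R:D|$ is a perfect $r$-th power and divides $|R|$, which divides $|Q|$ (as $R\lesssim Q$). Hence every prime $z$ dividing $|Q:T|$ satisfies
\[
  r\,v_z(|Q:T|)\leqslant v_z(|Q|).
\]
In particular, for each such prime,
\[
  v_z(|T|)=v_z(|Q|)-v_z(|Q:T|)\geqslant (1-1/r)\,v_z(|Q|)\geqslant v_z(|Q|)/2 .
\]

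\textbf{Step 2: equal prime sets.} Next I will use the condition $\Pi(|T|)=\Pi(|Q|)$ with $T<Q$. By the known classification of nonabelian simple groups with the same prime divisors (Zavarnitsine's list: $\A_n<\A_{n+1}$ when $n+1$ is not prime and has no new prime factor, $\PSL_2(q)$-type coincidences, $\PSp_4(3)\cong\PSU_4(2)$ versus $\A_6$, $\PSU_3(3)$, $\A_8\cong\PSL_4(2)$, $\M_{11}<\M_{12}$, and similar), the pair $(Q,T)$ is restricted to a finite set of families. For the infinite families, such as $Q=\A_m$ with $T=\A_{m-1}$ or $\A_{m-j}$, and classical groups containing a subgroup of the same type over a subfield or of smaller dimension, I will apply the inequality from Step 1 to a primitive prime divisor supplied by Lemma \ref{lem:ppd}. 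The idea is that such a prime divides $|Q|$ but not $|T|$, unless the dimensions match. Together with the requirement that $|Q:T|^r$ divide $|Q|$, this should eliminate all but small cases. For alternating groups, $|Q:T|=m(m-1)\cdots$, and I need $(|Q:T|)^r$ to divide $|\A_m|$ to a suitable power.

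\textbf{Step 3: realizing $R$.} For each surviving pair I will search over $r$ with $|Q:T|^r<|Q|$ for a subgroup $R\leqslant Q^r$ isomorphic to a subgroup of $Q$ with $|R:T^r\cap R|=|Q:T|^r$. Equivalently, $R$ acts on cosets of $T^r$ so that the set $R\,T^r$ is all of $Q^r$, which is a factorization $Q^r=R\,T^r$. Projecting to each coordinate gives factorizations $Q=\pi_i(R)\,T$. I will then use the known maximal factorizations of almost simple groups (Liebeck--Praeger--Saxl) to determine the possible $\pi_i(R)$. A final order check fixes $r$, and the triples that pass are recorded in Tables \ref{tab:2.1} and \ref{tab:2.2}.

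\textbf{Main obstacle.} I expect the hardest part to be the uniform elimination in the infinite families in Step 2, especially $\A_{m-1}<\A_m$ with $m$ composite. There the valuation bounds are weak, so I will rely on the product factorization $Q^r=R\,T^r$ with $R\lesssim Q$. Since $|R|\geqslant |Q:T|^r=m^r$ in that case, the plan is to use the factorizations of $\A_m$ with one factor $\A_{m-1}$: the other factor must then be $m$-transitive-like (transitive of degree $m$). From this I will bound $r$ via $m^r<m!/2$ and settle the remaining cases by direct computation.
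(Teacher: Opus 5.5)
Your Step 1 is the arithmetic the paper uses. From $|Q:T|^r=|R:T^r\cap R|$ and $|R|\mid|Q|$ one gets $r\,v_z(|Q:T|)\leqslant v_z(|Q|)$ for every prime $z\mid|Q:T|$. The paper's entire proof applies this inequality to the list of pairs $T<Q$ of nonabelian simple groups with $\Pi(|T|)=\Pi(|Q|)$ (Liebeck--Praeger--Saxl, Corollary 5 and Table 10.7 of their 2000 paper) and records what survives. The tables are therefore only necessary conditions, so nothing like your Step 3 (realizing $R$) is needed. For instance, $(\PSU_4(2),\A_6,2)$ is listed in Table \ref{tab:2.2}, yet $|Q|/|Q:T|^2=25920/72^2=5$ would force $R$ to have index $5$ in $\PSU_4(2)$, which is impossible.

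What goes wrong is how you plan to execute Step 2 and what you expect it to yield.

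First, the list you sketch is not the relevant one. The pairs that actually occur include the infinite families $\POmega^-_{2m}(q)<\PSp_{2m}(q)$, $\POmega^-_{2m}(q)<\POmega_{2m+1}(q)$, $\POmega_{2m-1}(q)<\POmega^+_{2m}(q)$ and $\PSL_2(q^2)<\PSp_4(q)$ (with the parity conditions of Table \ref{tab:2.1}). They also include $\A_k<\A_c$ whenever $(k,c]$ contains no prime, and isolated cases such as $\A_7<\PSU_3(5)$, $\PSL_2(13)<\G_2(3)$ and $\M_{23}<\Co_3$.

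Second, your primitive-prime-divisor idea is vacuous. Under the hypothesis $\Pi(|T|)=\Pi(|Q|)$, no prime divides $|Q|$ but not $|T|$. The paper uses a primitive prime divisor to compare valuations instead. For $(\PSp_{2m}(q),\POmega^-_{2m}(q))$ one has $|Q:T|=q^m(q^m-1)$. A primitive prime divisor $z$ of $q^m-1$ (when it exists) divides exactly the factors $q^m-1$ and $q^{2m}-1$ of $|Q|$, and not $q^m+1$. Hence $v_z(|Q|)=2v_z(|Q:T|)$, which forces $r=2$ rather than eliminating the family.

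Third, and as a consequence, the infinite families are \emph{not} eliminated: they are rows 1--5 of Table \ref{tab:2.1}. Your ``main obstacle'' is therefore a non-issue. Row 1 records $(\A_c,\A_k,r)$ subject to precisely your Step 1 inequality with $|Q:T|=c!/k!$. Your proposed treatment of $\A_{m-1}<\A_m$ could not work as an elimination anyway: $m^r<m!/2$ only gives $r<\log_m(m!/2)$, which is unbounded in $m$, and no direct computation covers infinitely many $m$.

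As written, your plan aims at eliminations for which it has no valid argument. The fix is to stop after applying the Step 1 inequality to the correct list of equal-prime pairs.
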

\begin{table}[htbp]
\centering
\renewcommand{\arraystretch}{1.3}
\setlength{\tabcolsep}{10pt}
\caption{Infinite cases for triples $(Q,T,r)$ such that $\Pi(|T|)=\Pi(|Q|)$}
\label{tab:2.1}
{\scriptsize
\begin{tabular}{|c|c|c|c|c|}
\hline
Row & $Q$ & $T$ & $r$ & Remark\\
\hline
1 & $\A_c$ & $\A_k$ &   $\leqslant\min_{p\mid n}\lfloor\frac{v_p(|Q|)}{v_p(n)}\rfloor$ & $5\leqslant k<c$, $(k,c]$ 
no prime, $n=\frac{c!}{k!}$\\
\hline
2 & $\PSp_{2m}(q)$ & $\POmega^-_{2m}(q)$ &  2  & $m, q$ are even, $(m,q)\neq(2,2)$\\
\hline
3 & $\POmega_{2m+1}(q)$ & $\POmega^-_{2m}(q)$ &  2 & $m$ is even, $q$ is odd\\
\hline
4 & $\POmega^+_{2m}(q)$ & $\POmega_{2m-1}(q)$&  2  & $m\geqslant4$ is even\\
\hline
5 & $\PSp_4(q)$ & $\PSL_2(q^2)$ &  2 & $q>2$\\
\hline
\end{tabular}}
\end{table}

\begin{table}[htbp]
\centering
\renewcommand{\arraystretch}{1.3}
\setlength{\tabcolsep}{10pt}
\caption{Specific cases for triples $(Q,T,r)$ such that $\Pi(|T|)=\Pi(|Q|)$}
\label{tab:2.2}
{\scriptsize
\begin{tabular}{|c|c|c|c|c|c|}
\hline
$Q$ & $T$ & $r$ & $Q$ & $T$ & $r$ \\
\hline
$\PSL_6(2)$ & $\PSL_5(2)$ & 2 &   $\M_{11}$ & $\PSL_2(11)$ & 2   \\
\hline
$\PSU_4(2)$ & $\A_6$      & 2 &  $\M_{12}$ & $\M_{11}$ & 2,\,3    \\
\hline
$\Sp_6(2)$  & $\A_8$      & 2 &  $\M_{24}$ & $\M_{23}$ & 2,\,3   \\
\hline
$\POmega^+_8(2)$ & $\A_9$ & 2 &  & &\\
\hline
\end{tabular}}
\end{table}

\pf\,\, Assume that $Q$ and $T$ are nonabelian simple groups such that $T<Q$ and $\Pi(|T|)=\Pi(|Q|)$. The possible pairs $(Q,T)$ are given by \cite[Table 10.7]{LPS2000} via \cite[Corollary 5]{LPS2000}.  We now turn to determining these explicitly.

Step I. Restricting the possible pairs $(Q, T)$ using the condition where $\Pi(|T|)=\Pi(|Q|)$ for nonabelian simple groups $Q$ and $T$ such that $T<Q$.

We now determine the pairs $(Q,T)$ satisfying $\Pi(|Q|)=\Pi(|T|)$. By \cite[Table 10.7]{LPS2000}, $T$ is contained in one of the subgroups listed there. For each row, we compare the prime divisors of the orders of $Q$ and the possible nonabelian simple subgroups $T$. If the subgroup containing $T$ is solvable, that row is discarded immediately. Otherwise, the order formulae for finite simple groups, together with the parameter restrictions in \cite[Table 10.7]{LPS2000}, determine when $\Pi(|Q|)=\Pi(|T|)$. This yields precisely the pairs listed in Tables \ref{tab:equal-primes1} and \ref{tab:equal-primes2}. We illustrate the argument with two cases.

Suppose that $Q=\A_c$ and $\A_k\unlhd M\leqslant\Sy_k\times\Sy_{c-k}$ such that $T\leqslant M$ is nonabelian simple, as in Row 1 of \cite[Table 10.7]{LPS2000}. Since the prime divisors of $|\A_n|$ are exactly the primes not exceeding $n$, the equality $\Pi(|\A_k|)=\Pi(|\A_c|)$ holds if and only if there is no prime $r$ with $k<r\leqslant c$. Thus $(Q,T)=(\A_c,\A_k)$, where $5\leqslant k<c$ and $(k,c]$ contains no prime. 

Assume that $Q=\PSL_2(8)$, as in Row 8 of \cite[Table 10.7]{LPS2000}. Then $T$ is contained in either $\ZZ_7.\ZZ_2$ or $P_1$. By \cite[Proposition 4.1.17]{KL1990}, $P_1\cong\ZZ_2^3\rtimes\ZZ_7$. Both groups are solvable, and so neither contains a nonabelian simple subgroup. Thus Row 8 gives no pair $(Q,T)$.

Applying this criterion to the remaining rows of \cite[Table 10.7]{LPS2000} gives Tables \ref{tab:equal-primes1} and \ref{tab:equal-primes2}
\begin{table}[htbp]
\centering
\renewcommand{\arraystretch}{1.3} 
\setlength{\tabcolsep}{10pt} 
\caption{Infinite cases for pairs $(Q,T)$ such $\Pi(|T|)=\Pi(|Q|)$}
\label{tab:equal-primes1}
{\scriptsize\begin{tabular}{|c|c|c|c|}
\hline
$Q$ & $T$ & Remark\\
\hline
$\A_c$ & $\A_k$ & $5\leqslant k<c$, and $(k,c]$ contains no prime\\
\hline
$\PSp_{2m}(q)$ & $\POmega^-_{2m}(q)$ & $m$ and $q$ are even, and $(m,q)\neq(2,2)$\\
\hline
$\POmega_{2m+1}(q)$ & $\POmega^-_{2m}(q)$ & $m$ is even and $q$ is odd\\
\hline
$\POmega^+_{2m}(q)$ & $\POmega_{2m-1}(q)$ & $m\geqslant4$ is even\\
\hline
$\PSp_4(q)$ & $\PSL_2(q^2)$ & $q>2$\\
\hline
\end{tabular}}
\end{table}

\begin{table}[htbp]
\centering
\renewcommand{\arraystretch}{1.3} 
\setlength{\tabcolsep}{10pt} 
\caption{Specific cases for pairs $(Q,T)$ such that $\Pi(|T|)=\Pi(|Q|)$}
\label{tab:equal-primes2}
{\scriptsize\begin{tabular}{|c|c|c|c|}
\hline
 $Q$ & $T$ &  $Q$ & $T$    \\
\hline
$\PSL_6(2)$ & $\PSL_5(2)$ &  $\PSU_5(2)$, $\M_{11}$ & $\PSL_2(11)$ \\
\hline
$\PSU_4(2)$ & $\A_5$, $\A_6$ & $\PSU_6(2)$, $\HS$, $\McL$ & $\M_{22}$  \\
\hline
$\Sp_6(2)$ & $\A_7$, $\A_8$ &  $\G_2(3)$ & $\PSL_2(13)$  \\
\hline
$\POmega^+_8(2)$ & $\A_7$, $\A_8$, $\A_9$ &   ${}^2\F_4(2)'$ & $\PSL_2(25)$  \\
\hline
$\PSU_3(3)$ & $\PSL_2(7)$ &   $\M_{12}$ & $\M_{11}$, $\PSL_2(11)$  \\
\hline
$\PSU_3(5)$, $\PSp_4(7)$ & $\A_7$ &   $\M_{24}$, $\Co_2$, $\Co_3$ & $\M_{23}$  \\
\hline
$\PSU_4(3)$ & $\PSL_3(4)$, $\A_7$ &   &\\
\hline
\end{tabular}}
\end{table}

Step II. Derivation of the candidate pairs $(Q, T)$ based on the condition where $|Q|^r=|T|^r|R|/|T^r\cap R|$.

Suppose that there exists a nontrivial subgroup $R\leqslant Q^r$ with $R\lesssim Q$, $|R|<|Q|$, and $|Q|^r=|T|^r|R|/|T^r\cap R|$ for some integer $r\geqslant2$. Let $n=|Q:T|$. Then $n^r=|R:T^r\cap R|$, and so $n^r\mid|R|$. Since $R\lesssim Q$, we have $v_p(|R|)\leqslant v_p(|Q|)$ for every prime $p$. Thus $r\,v_p(n)\leqslant v_p(|Q|)$ for every $p\mid n$, and so $r\leqslant\min_{p\mid n}\left\lfloor\frac{v_p(|Q|)}{v_p(n)}\right\rfloor$. We apply this criterion to all pairs $(Q,T)$ listed in Tables \ref{tab:equal-primes1} and \ref{tab:equal-primes2}. The resulting cases are recorded in Tables \ref{tab:2.1} and \ref{tab:2.2}. We give one case to illustrate the calculation.

Let $Q=\PSp_{2m}(q)$ and $T=\POmega^-_{2m}(q)$ for $m$ and $q$ even. It follows from $|Q|=q^{m^2}\prod_{i=1}^{m}(q^{2i}-1)$ and $|T|=q^{m(m-1)}(q^m+1)\prod_{i=1}^{m-1}(q^{2i}-1)$ that $n=|Q:T|=q^m(q^m-1)$. Applying the Lemma \ref{lem:ppd}, one yields that there exists a primitive prime divisor $z$ of $q^m-1$, except when $(q,m)=(2,6)$, or when $m=2$ and $q+1$ is a power of $2$. Since $q$ is even, the latter case does not occur. Suppose first that $(q,m)\neq(2,6)$. 
If $z\mid q^{2i}-1$ for $1\leqslant i\leqslant m$, then $m\mid2i$, and so $i=m/2$ or $m$. Derived from $(q^m-1,q^m+1)=1$, we conclude that $z\nmid q^m+1$ and $v_z(|Q|)=2v_z(q^m-1)$, and further, $v_z(|Q|)=2v_z(n)$ as $v_z(n)=v_z(q^m-1)$. It deduced from $n^r\mid|Q|$ for $r\geqslant2$ integer that $r=2$. Now suppose that $(q,m)=(2,6)$. Direct calculation yields that $v_7(|Q|)=2$ and $v_7(n)=1$. Still since $n^r\mid|Q|$, one yields that $r=2$.

For the remaining pairs in Tables \ref{tab:equal-primes1} and \ref{tab:equal-primes2}, the same argument is applied by choosing a suitable prime divisor of $n=|Q:T|$ and comparing its valuations in $n$ and $|Q|$. The resulting cases are listed in Tables \ref{tab:2.1} and \ref{tab:2.2}, proving the lemma.  \qed

\subsection{graphs}

Let $X\leqslant\Aut(\Ga)$, where $\Ga$ is a graph, such that $X$ is transitive on $V(\Ga)$. Then $\Ga$ is said to be \textit{$X$-locally primitive} if $X_\a$ is primitive on $\Ga(\a)$, where $\a\in V(\Ga)$. In particular, $\Ga$ is a \textit{locally primitive graph} if $X=\Aut(\Ga)$. Since every symmetric graph of prime valency is locally primitive, the following result on locally primitive graphs is useful.

\begin{lemma}\cite[Theorem 5]{HP2018}\label{NQ}
Let $\Ga$ be a connected $X$-vertex-transitive and locally-primitive graph, and let $N\unlhd X$ have at least three orbits on $V(\Ga)$. Then the following hold. 
\begin{enumerate}[leftmargin=1.2em]  
\item [(1)] $N$ is semiregular on $V(\Ga)$, $X/N\leqslant\Aut(\Ga_N)$, $\Ga_N$ is $X/N$-locally-primitive, and     $\Ga$ is an $N$-cover of $\Ga_N$; 
\item [(2)] $\Ga$ is $(X,s)$-arc-transitive if and only if $\Ga_N$ is $(X/N,s)$-arc-transitive for $1\leqslant     s\leqslant 5$ or $s=7$;
\item [(3)] $X_\a\cong (X/N)_v$, where $\a\in V(\Ga)$ and $v\in V(\Ga_N)$;
\item [(4)] If $X$ has a normal subgroup $M$ which is contained in $N$, then $\Ga_M$ is an $X/M$-locally-primitive $N/M$-cover of $\Ga_N$.
\end{enumerate}
\end{lemma}

It is known that a graph $\Ga$ is $X$-symmetric if and only if $X$ is vertex-transitive and $X_\a$ is transitive on $\Ga(\a)$ for some $\a\in V(\Ga)$. Denote by $X_\a^{\Ga(\a)}$ the permutation group induced by the action of $X_\a$ on $\Ga(\a)$, and by $X^{[1]}_\a$ the kernel of $X_\a$ acting on $\Ga(\a)$. Then $X^{\Ga(\a)}_\a\cong X_{\a}/X^{[1]}_\a$. At present, the structure of vertex stabilizers of connected symmetric graphs of prime valency has been completely determined (see \cite[Propositions 2--5]{DM1980}, \cite[Theorem 1.1]{GF2012}, \cite[Theorem 1.1]{GLH2016}, \cite[Theorem 2.1]{GHX2015}, \cite[Theorem 1.2]{LLL2018}), which lead to the following conclusion.

\begin{lemma}\label{power}
Let $\Ga$ be a connected $X$-symmetric graph of odd prime valency $p$. Then $v_p(|X_\a|)=1$ for every $\a\in V(\Ga)$. 
\end{lemma}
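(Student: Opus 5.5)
The statement reduces to three facts: $p$ divides $|X_\a|$, $p^2$ does not, and every stabilizer is conjugate to $X_\a$. Since $X$ is vertex-transitive, all point stabilizers are conjugate in $X$, so it suffices to treat one vertex $\a$. Connectedness forces $\Ga(\a)\neq\emptyset$. Arc-transitivity makes $X_\a$ transitive on the $p$ points of $\Ga(\a)$, so $p$ divides $|X_\a^{\Ga(\a)}|$ and hence $p\mid |X_\a|$. It remains to show $p^2\nmid|X_\a|$.

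I would first analyse the local action. The group $X_\a^{\Ga(\a)}$ is a transitive group of prime degree $p$. By Burnside's theorem it is either affine, $\ZZ_p\rtimes\ZZ_m\leqslant\AGL_1(p)$ with $m\mid p-1$, or almost simple and $2$-transitive. Its order divides $p!$, so its Sylow $p$-subgroup has order exactly $p$. The problem therefore reduces to showing that the kernel $X_\a^{[1]}$ has order coprime to $p$. This is the content of the cited classification of stabilizers (\cite{DM1980,GF2012,GLH2016,GHX2015,LLL2018}), which I would invoke in the following form. If $X_\a^{\Ga(\a)}$ is solvable, then $X_\a^{[1]}$ is a $\{2,3\}$-group when $p=3$; in general, a Thompson--Wielandt type argument shows that $X_\a^{[1]}$ has order dividing a power of $m$, and $m\mid p-1$ is coprime to $p$. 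If $X_\a^{\Ga(\a)}$ is nonsolvable, the listed structures of $X_\a$ in \cite{GLH2016,LLL2018} give $X_\a^{[1]}$ as an extension built from point stabilizers $(X_\b^{\Ga(\b)})_\a$ of the local action. Each such point stabilizer has index $p$ in a group whose $p$-part is $p$, so it is a $p'$-group.

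The main obstacle is the nonsolvable local case. There one must know that $X_{\a\b}^{[1]}$, the kernel on $\Ga(\a)\cup\Ga(\b)$, is a $p'$-group, which needs the fine structure from the classification rather than a soft argument. I would argue it directly as follows. Suppose $P$ is a Sylow $p$-subgroup of $X_\a^{[1]}$ with $P\neq1$. By connectedness, choose the arc $(\a,\b)$ so that $P$ fixes $\Ga(\a)$ pointwise but acts nontrivially on $\Ga(\b)$. Then the image of $P$ in $X_\b^{\Ga(\b)}$ is a nontrivial $p$-subgroup of $(X_\b^{\Ga(\b)})_\a$. That image is a $p'$-group, since it is a point stabilizer of a transitive group of degree $p$ with Sylow of order $p$. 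This is a contradiction, so $X_\a^{[1]}$ is a $p'$-group, and therefore $v_p(|X_\a|)=v_p(|X_\a^{\Ga(\a)}|)=1$.
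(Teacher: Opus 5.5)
Your proof is correct, but it takes a different route from the paper, which gives no argument and simply reads the lemma off the complete classification of vertex stabilizers of connected arc-transitive graphs of prime valency cited just before it. Your last paragraph makes that classification unnecessary. It uses only faithfulness, connectedness, and the fact that a $p$-group acting on $p$ points with a fixed point acts trivially (equivalently, a point stabilizer of a transitive group of degree $p$ is a $p'$-group). So it works uniformly for every prime $p$ and every local action, solvable or not, and it is self-contained, whereas the paper relies on several deep case-by-case classifications.

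Consequently your middle paragraph should simply be deleted, since it is superfluous and partly inaccurate. Knowing that $X_\alpha^{[1]}$ is a $\{2,3\}$-group for $p=3$ would not exclude $3\mid |X_\alpha^{[1]}|$ (it is in fact a $2$-group). Your assertion that the nonsolvable case needs the fine structure of the classification is refuted by your own final argument.

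One step needs tightening: the arc cannot in general be taken at the original vertex $\alpha$, because $P$ may act trivially on $\Gamma(\beta)$ for every $\beta\in\Gamma(\alpha)$. Let $\Delta$ be the set of vertices $\gamma$ such that $P$ fixes $\{\gamma\}\cup\Gamma(\gamma)$ pointwise. Then $\alpha\in\Delta$, and $\Delta\neq V(\Gamma)$ by faithfulness. A path leaving $\Delta$ yields an edge $\{\gamma,\delta\}$ with $\gamma\in\Delta$ and $\delta\notin\Delta$, so $P$ fixes $\gamma$ and $\delta$ but acts nontrivially on $\Gamma(\delta)$; your contradiction applies there. The same idea gives an even shorter proof: any element of order $p$ in $X_{\alpha\beta}$ fixes $\Gamma(\alpha)$ pointwise, hence by propagation along paths every vertex. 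So $X_{\alpha\beta}$ is a $p'$-group and $|X_\alpha|=p\,|X_{\alpha\beta}|$.
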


\section{Vertex quasiprimitive case}\label{sec3}

In this section, we always let $\Ga$ be a connected $p$-valent $(X,s)$-transitive Cayley graph of characteristically simple group $G=T_1\times\cdots\times T_k\cong T^k$, where each $T_i\cong T$ is simple, $s$ is a positive integer, $p$ is an odd prime, $k\geqslant 2$ is an integer and $G\leqslant X\leqslant \Aut(\Ga)$. Let $\a\in V(\Ga)$. Suppose that $X$ is quasiprimitive on $V(\Ga)$. Then $X$ is of one of the following five types: affine, almost simple, simple diagonal, product action or twisted wreath (see \cite[Theorem 1]{P1993} for more details). The quasiprimitive case under Hypothesis \ref{hypothesis} is given by the following theorem.

\begin{theorem}\label{thm:quasiprimitive}
Let $\Ga$ be as in Hypothesis \ref{hypothesis}. If $X$ is quasiprimitive, then one of the following holds.
\begin{enumerate}[leftmargin=1.2em]  
\item [(1)] $X$ is of affine type, and $\Ga$ is a Cayley graph of $G\cong\soc(X)\cong\ZZ_2^k$;
\item [(2)] $X$ is of almost simple type, and $\Ga\cong\K_{2^k}$; 
\item [(3)] $X$ is of twisted wreath type, $\Ga=\Cay(\soc(X), S)$, where $S$ consists of involutions. 
\end{enumerate}
\end{theorem}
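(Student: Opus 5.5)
The plan is to go through the five quasiprimitive types of \cite[Theorem 1]{P1993} one at a time. We use two basic facts: $G=T^k$ is regular on $V(\Ga)$, so $|V(\Ga)|=|T|^k$; and $v_p(|X_\a|)=1$ by Lemma \ref{power}. We also split according to whether $T$ is abelian, that is $T\cong\ZZ_q$, or nonabelian simple. In the nonabelian case, Lemma \ref{subgroup} gives $G\leqslant\soc(X)$ whenever $\soc(X)$ is nonabelian. In the abelian case, $G\cong\ZZ_q^k$ and the map $g\mapsto g^{-1}$ is an automorphism of $G$ preserving $S$. It pairs $s$ with $s^{-1}$, and $|S|=p$ is odd, so $S$ contains an involution. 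Hence $q=2$ and $G\cong\ZZ_2^k$.

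\emph{Affine type.} Here $V=\soc(X)\cong\ZZ_q^d$ is regular and $X\leqslant\AGL(d,q)$. By Lemma \ref{obs}, the normal subgroup $G\cap V$ of $G$ is either $1$ or $G$. If $G\cap V=1$ and $T$ is nonabelian, then $G$ embeds in $X/V\leqslant\mathrm{GL}(d,q)$. But $|G|=|V|=q^d$, so $G$ would be a $q$-group, which is impossible. Hence $T$ is abelian, so $q=2$ by the involution argument, and $|G|=|V|=2^k$. Since $V$ is regular, $\Ga$ is also a Cayley graph of $\soc(X)\cong\ZZ_2^k$, which is conclusion (1).

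\emph{Twisted wreath type.} Here $\soc(X)=Q^m$ is nonabelian and regular. By Lemma \ref{subgroup}, $G\leqslant\soc(X)$, and comparing orders gives $G=\soc(X)$. Thus $G\unlhd X$ and $X_\a\leqslant\Aut(G,S)$. To see that $S$ consists of involutions, we use that in twisted wreath type the stabiliser $X_\a$ is a complement that permutes the $m$ simple factors. For $s\in S$, arc-transitivity yields an element of $X_\a$ mapping $s$ to $s^{-1}$. Combining this with the description of $X_\a$ via the twisting homomorphism, we will show that the projections of $s$ and $s^{-1}$ onto the factors force $s^2=1$. This step needs care, and we would follow the standard twisted-wreath analysis.

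\emph{Simple diagonal and product action types.} The goal is to exclude both. In the simple diagonal case, $G\leqslant\soc(X)=Q^m$ by Lemma \ref{subgroup}, and $\soc(X)_\a$ is a full diagonal copy of $Q$. Then $X_\a$ has the nonabelian simple section $Q$, and $|Q|^{m-1}=|T|^k$. By the known structure of vertex stabilisers of prime-valent symmetric graphs (the sources behind Lemma \ref{power}), the only nonabelian composition factors of $X_\a$ are those of $X_\a^{\Ga(\a)}$, a primitive group of degree $p$, together with factors of the kernel $X_\a^{[1]}$, which are very restricted. Moreover, every prime divisor of $|X_\a|$ is at most $p$. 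Since $|Q|$ and $|T|$ share their prime divisors, this forces $p\mid |T|^k$ many times over, and we expect a contradiction with $v_p(|X_\a|)=1$. The product action case is handled the same way, using $G\leqslant Q^m$ and the factorisation $\soc(X)=G\,\soc(X)_\a$, which projects onto each factor.

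\emph{Almost simple type, the main obstacle.} If $T$ is abelian, then $G\cong\ZZ_2^k$ is a regular elementary abelian subgroup of an almost simple quasiprimitive group. We would invoke Li's classification of quasiprimitive groups with an abelian regular subgroup. It shows that $X$ is $2$-transitive, so $\Ga\cong\K_{2^k}$, with $p=2^k-1$ a Mersenne prime. If $T$ is nonabelian, then Lemma \ref{subgroup} gives the factorisation $L=\soc(X)=G\,L_\a$ with $G\cong T^k$, $k\geqslant 2$, and $|L_\a|$ having $p$-part at most $p$. We plan to use the Liebeck--Praeger--Saxl classification of factorisations of almost simple groups with a factor that is not simple, together with the restricted shape of $L_\a$, to exclude every such factorisation. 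We expect this case-by-case exclusion to be the hardest and longest part of the proof.
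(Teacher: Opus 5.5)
Your affine case and the case of abelian $T$ under almost simple type are fine and agree with the paper. The exclusion of the simple diagonal type, however, fails as proposed. That $p^k$ divides $|T|^k=|V(\Ga)|$ says nothing about $v_p(|X_\a|)$, and no counting argument of the kind you describe can give a contradiction. Suppose $\soc(X)=Q^m$ acts in diagonal action with $\soc(X)_\a=D$ the diagonal. Then $Q^{m-1}\times1$ meets $D$ trivially and has order $|Q|^{m-1}$, so it is a regular subgroup $T^k$ with $T=Q$ and $k=m-1$. For instance, $Q=\A_p$ ($p\geqslant5$), $m=3$, $X=\A_p^3\rtimes\ZZ_3$ and $X_\a\cong\A_p\times\ZZ_3$ satisfy every numerical condition you list.

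The missing input is graph-theoretic. $\soc(X)_\a\cong Q$ acts nontrivially on $\Ga(\a)$, since otherwise, by connectivity, it would fix every vertex. As $Q$ is simple, it acts faithfully, so $X_\a^{\Ga(\a)}$ is a nonsolvable transitive group of prime degree and hence $2$-transitive by Burnside's theorem. Thus $\Ga$ is $(X,2)$-arc-transitive, and \cite[Theorem 2]{P1993} says that a $2$-arc-transitive quasiprimitive group is never of simple diagonal type.

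The product action case cannot be done ``the same way'' either. There the $v_p$-count does finish the job, but only after structure your sketch does not supply:
\begin{enumerate}
\item $\soc(X)_\a$ is a full diagonal subgroup of $R_1\times\cdots\times R_m$ with $R_i<Q_i$. Injectivity of the projections needs Lemma \ref{NQ} together with the fact that $G$, being perfect, has no subgroup of index $2$.
\item Components of simple diagonal type are impossible, because $\soc(X)_\a$ is then a direct product of at least two isomorphic groups of order divisible by $p$, forcing $v_p(|X_\a|)\geqslant2$.
\item The trichotomy of \cite[Theorem 1.3(3)]{DS2020} describes how a transitive $T^k$ sits in $Q^m$.
\end{enumerate}
Moreover, your treatment of all the non-affine, non-almost-simple types relies on Lemma \ref{subgroup}, so it assumes $T$ is nonabelian. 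For $G\cong\ZZ_2^k$, Li's classification \cite[Theorem 1.1]{L2003} still allows product action groups with socle $(\A_{2^a})^r$. These must be excluded separately, as the paper does in Lemma \ref{sovable}.

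Two further steps are left unproved. In the twisted wreath case, an automorphism of $G$ sending $s$ to $s^{-1}$ does not force $s^2=1$, so the announced twisting-homomorphism analysis has no clear endpoint. The correct argument is the one you already used for abelian $T$. Since $S=S^{-1}$ and $|S|=p$ is odd, $S$ contains an involution. Also $X_\a\leqslant\Aut(G,S)$ is transitive on $S$ and preserves element orders, so every element of $S$ is an involution.

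In the almost simple case with $T$ nonabelian, you only announce a case-by-case exclusion. The paper first shows that $X_\a$ is solvable via \cite[Theorem 1.2]{LWX2023}. It then eliminates the rows of \cite[Table 1.1]{LX2022} (factorisations with a solvable factor) by order comparisons. A shorter route, which the paper itself uses in Lemma \ref{lem:AS-large}, is \cite[Theorem 1.3(1)]{DS2020}. Since $G\leqslant\soc(X)$ and $G$ is not simple, it forces $X\in\{\A_m,\Sy_m\}$ with $m=|T|^k$, which is $2$-transitive. Then $\Ga\cong\K_m$, and $p=|T|^k-1=(|T|-1)(|T|^{k-1}+\cdots+1)$ is not prime.
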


Next, we prove the above theorem by establishing some technical lemmas.

\begin{lemma}\label{affine}
Suppose that $X$ is of affine type. Then $T\cong\ZZ_2$, $\Ga$ is isomorphic to a Cayley graph of $\soc(X)\cong\ZZ_2^k$. %Moreover, $s\leqslant2$, and $\Ga\cong\K_{2^k}$, $\Box_p$ or $\Ga({\rm C}_{23})$ for $s=2$.
\end{lemma}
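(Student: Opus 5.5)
Since $X$ is of affine type, $\soc(X)=V\cong\ZZ_q^d$ for some prime $q$, and $V$ is elementary abelian and regular on $V(\Ga)$. Hence $|V(\Ga)|=q^d=|G|=|T|^k$. The order of a nonabelian simple group has at least three prime divisors, so $T$ cannot be nonabelian. Therefore $T\cong\ZZ_q$, and $G\cong\ZZ_q^k$ has the same order as $V$, so $d=k$.

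The plan is first to show $G=V$, and then to force $q=2$. Since $X\leqslant\AGL_k(q)$ and $|G|=|V|$, the group $GV$ is a $q$-group in which $V$ is normal. I would argue that $G$ is a Sylow $q$-subgroup of $GV$ only if $G=V$, which needs more than order counting. Instead I would work with $V$ directly: $\Ga$ is a normal Cayley graph of $V$, because $X=V\rtimes X_\a$ with $X_\a\leqslant\GL_k(q)$. Thus $\Ga\cong\Cay(V,S')$ for some $S'\subset V$, $|S'|=p$, with $X_\a\leqslant\Aut(V,S')$ transitive on $S'$. Since $S'$ generates $V$ and $X_\a$ is faithful on $V$, hence on $S'$, we get $X_\a\lesssim\Sy_p$. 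This already gives the Cayley-graph conclusion on $\soc(X)$, and $\soc(X)\cong\ZZ_q^k\cong G$ as abstract groups.

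It remains to show $q=2$, which I expect to be the main step. The set $S'$ is inverse-closed with $|S'|=p$ odd, and $s\mapsto s^{-1}$ pairs off the elements of $S'$ that are not involutions. So $S'$ contains an involution of $V$, which forces $q=2$. In fact, since $-1$ is central in $\GL_k(q)$ and $X_\a$ is transitive on $S'$, either every element of $S'$ is an involution or none is, and the parity argument rules out the latter. Hence $q=2$, $T\cong\ZZ_2$, and $\Ga$ is a Cayley graph of $\soc(X)\cong\ZZ_2^k$.

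The only delicate point is that this paper states the hypothesis for simple $T$ generally. The case of nonabelian $T$ is excluded as soon as the affine socle is identified, because $|V(\Ga)|$ must be a prime power. I would also double-check that quasiprimitive affine type means $V$ is regular; this holds because $V$ is abelian and transitive.
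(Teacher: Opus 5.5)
Your proposal is correct and follows essentially the same route as the paper. The regularity of the elementary abelian socle gives $|T|^k=q^d$, hence $T\cong\ZZ_q$ and $d=k$, and a parity argument then forces $q=2$: the paper uses $p|V(\Ga)|=2|E(\Ga)|$, whereas you use an involution in the odd-size inverse-closed connection set, which amounts to the same observation. Realizing $\Ga$ as $\Cay(\soc(X),S')$ via the regular normal subgroup, without claiming $G=\soc(X)$, matches exactly what the lemma asserts.
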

\pf\, Suppose that $X$ is of affine type. Then $\soc(X)\cong\ZZ_\ell^d$ for some prime $\ell$ and positive integer $d$, and $\soc(X)$ is regular on $V(\Ga)$. Since $G=T^k$ is also regular on $V(\Ga)$, we have $|T|^k=|G|=|\soc(X)|=\ell^d$. Hence $T\cong\ZZ_\ell$ and $d=k$, so $G\cong\ZZ_\ell^k$. Since $\Ga$ has odd prime valency $p$, the equality $p|V(\Ga)|=2|E(\Ga)|$ implies $\ell=2$. Thus $G\cong\soc(X)\cong\ZZ_2^k$ and $\Ga$ is isomorphic to a Cayley graph of $\soc(X)\cong\ZZ_2^k$. Hence Lemma \ref{affine} holds.\qed

\begin{lemma}\label{almost simple}
Suppose that $X$ is of almost simple type. Then $G\cong \ZZ_2^k$ and $\Ga\cong\K_{2^k}$.
\end{lemma}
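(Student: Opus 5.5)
The proof splits according to whether $T$ is abelian.

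\emph{Case $T$ nonabelian.} Suppose first that $T$ is nonabelian simple. Then $X$ is quasiprimitive with nonabelian socle $L=\soc(X)$, which is simple because $X$ is almost simple. Since $G$ is a transitive nonabelian characteristically simple subgroup of $X$, Lemma \ref{subgroup} gives $G\leqslant L$. Thus $L$ is a nonabelian simple group containing the regular subgroup $G\cong T^k$ with $k\geqslant2$, and $L=GL_\a$ is a factorization with $G\cap L_\a=1$. By Lemma \ref{power}, $v_p(|L_\a|)\leqslant1$, and $L_\a$ is the stabilizer in a $p$-valent symmetric graph, so its structure is one of the known types listed in the references before Lemma \ref{power}. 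We would then use the classification of exact factorizations of almost simple groups with a factor that is a direct power $T^k$, $k\geqslant2$: by Liebeck--Praeger--Saxl, the only possibilities have $T^k$ contained in a maximal subgroup such as $\Sy_m\wr\Sy_\ell$ or $\A_{n-1}$. For each candidate we would check that $|L:T^k|=|L_\a|$ is incompatible with $L_\a$ being a prime-valent stabilizer (bounded $p$-part, and order dividing $p\cdot$ a known bound). Alternatively, cite the known result that a nonabelian simple group has no regular subgroup of the form $T^k$ with $k\geqslant2$ in such settings. This rules out the nonabelian case, and it is the main obstacle. I would first try to reduce it to the tables of factorizations in \cite{LPS2000}, checking the rows where one factor is not simple.

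\emph{Case $T$ abelian.} Hence $T\cong\ZZ_\ell$ and $G\cong\ZZ_\ell^k$. Since $p$ is odd, $p|V(\Ga)|=2|E(\Ga)|$ forces $|V(\Ga)|$ to be even, so $\ell=2$ and $G\cong\ZZ_2^k$.

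\emph{Identifying $\Ga$.} Now $X$ is almost simple and contains a regular elementary abelian subgroup of order $2^k$. By Lemma \ref{subgroup}-type arguments, or by the classification of quasiprimitive (indeed primitive) almost simple groups with a regular abelian subgroup (Li's theorem), the socle acts 2-transitively. The candidates are $\A_{2^k}$ or $\Sy_{2^k}$ in natural action, $\PSL_d(q)$ on points with $(q^d-1)/(q-1)=2^k$, which forces $d=2$ and $q=2^k-1$ a Mersenne prime, and sporadic examples such as $\M_{12}$ or $\M_{24}$ are excluded since their degrees are not powers of $2$ with an elementary abelian regular subgroup. In each case $X$ is $2$-transitive on $V(\Ga)$, so $\Ga$ is complete, namely $\Ga\cong\K_{2^k}$. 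Its valency is then $2^k-1=p$, which is consistent.
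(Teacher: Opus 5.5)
\textbf{The abelian case.} This part follows the paper. Parity forces $G\cong\ZZ_2^k$, and Li's classification \cite{L2003} of primitive groups with an abelian regular subgroup makes $X$ $2$-transitive, so $\Ga\cong\K_{2^k}$. Two small repairs are needed. First, you need primitivity, which comes from \cite[Corollary 3.4]{L2003}: a quasiprimitive group with an abelian regular subgroup is primitive. Second, in Li's almost simple list every case other than $\A_n,\Sy_n$ has a \emph{cyclic} regular subgroup. So for $k\geqslant2$ only $X\in\{\A_{2^k},\Sy_{2^k}\}$ with $k\geqslant3$ survives, and your $\mathrm{PSL}_2(q)$ candidate never arises. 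Your remark that every listed group is $2$-transitive suffices for the conclusion anyway.

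\textbf{The gap: $T$ nonabelian.} This case is the substance of the lemma, and your proposal does not settle it; it only lists strategies, one of which is false. Your ``alternative'' claim that a simple group has no regular subgroup $T^k$ with $k\geqslant2$ fails. Since $G=T^k$ is perfect, its right regular representation lies in $\A_{|G|}$. Hence $\A_{|G|}=G\,\A_{|G|-1}$ is an exact factorization with $G$ regular, and $\K_{|G|}$ is an $\A_{|G|}$-arc-transitive Cayley graph of $G$ with almost simple group. No purely group-theoretic argument can exclude this configuration. It dies only because the valency $|T|^k-1$ is divisible by $|T|-1>1$, so it is not prime; your sketch never uses this. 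Your appeal to Liebeck--Praeger--Saxl is also not a usable step. The stated outcome ($T^k$ inside something like $\Sy_m\wr\Sy_\ell$ or $\A_{n-1}$) is not a precise result; in the example above $\A_{|G|-1}$ is the \emph{other} factor. Moreover, $X_\a$ need not be maximal, so reducing to maximal factorizations requires controlling subgroups of the maximal factors, which you do not do, and no row is actually checked.

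\textbf{How the paper closes this case.} It uses two classifications of factorizations.
\begin{enumerate}
\item If $X_\a$ is nonsolvable, \cite[Theorem 1.2, Table 1]{LWX2023} leaves only $X=\A_m.\mathcal{O}$ with $G$ regular of degree $m$ and $X_\a=\A_{m-1}.\mathcal{O}$. This is exactly the example above, and it is killed because $p=|T|^k-1$ is not prime.
\item If $X_\a$, hence $L_\a$, is solvable, then $G\leqslant L$ by Lemma \ref{subgroup}, and $L=GL_\a$ is a factorization with a solvable factor. So \cite[Theorem 1.1, Table 1.1]{LX2022} applies. Each row is excluded either because $G$ would have a normal subgroup not of the form $T^l$, contrary to Lemma \ref{obs}, or because $|G|>|L|$.
\end{enumerate}
Some argument of this strength, including the primality step, is needed to complete your proof.
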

\pf\, Suppose that $G$ is solvable, that is, $G\cong\ZZ_{\ell}^k$ for some prime $\ell$. The equation $p\cdot {\ell}^k=2|E(\Ga)|$ with odd prime $p$ forces $\ell=2$. So $T\cong\ZZ_2$ and $G\cong\ZZ_2^k$, which implies that $X$ contains an abelian regular subgroup $G$. Then it follows from \cite[Corollary 3.4]{L2003} that $X$ is primitive. 
By \cite[Theorem 1.1]{L2003}, $X\cong\A_{2^k}$ or $\Sy_{2^k}$. Since $X$ is of almost simple type, we get that $k\geqslant 3$. Then $X_\a\cong\A_{2^k-1}$ or $\Sy_{2^k-1}$ which is nonsolvable. Combing with \cite[Propositions 2-5]{DM1980}, \cite[Theorem 1.1]{GF2012}, \cite[Theorem 1.1]{GLH2016}, \cite[Theorem 2.1]{GHX2015} and \cite[Theorem 1.2]{LLL2018}, we obtain that $s\geqslant 2$. Thus $|V(\Ga)|=|G|=\ZZ_2^k$ and $X\in\{\A_{2^k}, \Sy_{2^k}\}$ that $X$ is 2-transitive on $V(\Ga)$, and so $\Ga\cong\K_{2^k}$ as $\Ga$ is connected, as desired.

\textbf{Claim:} $G$ is solvable. 

Assume that $G$ is nonsolvable. Then $G$ is a transitive nonabelian characteristically simple subgroup of $X$. Let $L=\soc(X)$. The Lemma \ref{subgroup} shows that $G\leqslant L$. We assert that $G$ is core free in $X$, i.e., $\Core_X(G)=1$. On the contrary, assume that $\Core_X(G)\neq 1$. Since $X$ is quasiprimitive, $G$ is regular on $V(\Ga)$ and $\Core_X(G)$ is a normal subgroup of groups $X$ and $G$, we yield that $\Core_X(G)$ is transitive on $V(\Ga)$ and so $G=\Core_X(G)$ and $G\unlhd X$. We deduce from $X$ is of almost simple type that $L\unlhd G\leqslant X$ and so $L=G$ is a nonabelian simple group, a contradiction arisen as $G=T^k$ for $T$ nonabelian simple and $k\geqslant2$ integer. Hence the assertion holds. 

We asset that $X_\a$ is solvable. On the contrary, we suppose that $X_\a$ is nonsolvable. Applying the \cite[Theorem 1.2]{LWX2023}, one yields that all the candidates of triple $(X, G, X_\a)$ are listed in \cite[Table 1]{LWX2023}. Detail checking all the candidates of $G$, since $G=T^k$ (with $k\geqslant2$) is a nonabelian characteristically simple group, we can exclude the cases listed in lines 2-23 of \cite[Table 1]{LWX2023}. Now that the line 1 holds, that is $(X, G, X_\a)=(\A_m.\mathcal{O}, [m].\mathcal{O}_1, \A_{m-1}.\mathcal{O}_2)$ or $(\A_m.\mathcal{O}, \A_{m-1}.\mathcal{O}_2, [m].\mathcal{O}_1)$, where $[m]$ is a regular permutation group on $m$ points, $|G|=m\geqslant60$, $\mathcal{O}_1, \mathcal{O}_2\leqslant\mathcal{O}\leqslant\ZZ_2$ with $|\mathcal{O}|=|\mathcal{O}_1||\mathcal{O}_2|$. If $G$ is isomorphic to $A_{m-1}.\mathcal{O}_2$ or $[m].2$, then it is not the form of $G$. It follows that $(X, G, X_\a)=(\A_m.\mathcal{O}, [m], \A_{m-1}.\mathcal{O})$ and so $\Ga$ is a complete graph of valency $p=m-1=|G|-1$. Since $|G|=|T|^k$ for $k\geqslant2$ and $T$ nonabelian simple, one yields that $p=m-1=|T|^k-1$, a contradiction arisen as $p$ is prime and so the assertion holds. 

Since $X=G X_\a$ with $G\cap X_\a=1$, $G\leqslant L$, $G=T^k$ for some nonabelian simple group $T$ and integer $k\geqslant 2$, by \cite[Theorem 1.1]{LX2022}, one yields that the candidates for triple $(L, L_\a, G)$ lie in \cite[Table 1.1]{LX2022}. Next, we will eliminate all 9 cases successively.

The first line shows that $L=\PSL_n(q)$, $L_\a=X_\a\cap L\leqslant\frac{q^n-1}{(q-1)(n, q-1)}\rtimes\ZZ_n$ and $\E_q^{n-1}\rtimes\SL_{n-1}(q)\unlhd G$, where $q$ is a prime power and $n$ is a  positive integer. Since $G$ is a characteristic group, applying the Lemma \ref{obs}, we yield that $\E_q^{n-1}\rtimes\SL_{n-1}(q)$ is also a character simple group, a contradiction. Hence the first line of \cite[Table 1.1]{LX2022} is ruled out. 

For line 2 of \cite[Table 1.1]{LX2022}, one yields that $L=\PSL_4(q)$ and $\PSp_4(q)\unlhd G$. If $q=2$, then $G\unrhd\PSp_4(2)\cong\Sy_6$ is not characteristic simple, which contradicts the result of the Lemma \ref{obs}. Hence, $q\geqslant 3$ and $\PSp_4(q)$ is nonabelian simple. Using the Lemma \ref{obs} again, one yields that $T\cong\PSp_4(q)$. Comparing the orders $|L|$ and $|T|^2$, one yields that
\begin{equation*}
\frac{|L|}{|T|^2}=\frac{|\PSL_4(q)|}{|\PSp_4(q)|^2}=\frac{(2, q-1)^2(q^3-1)}{q^2(q^4-1)(q^2-1)(4, q-1)}<1.
\end{equation*}
Hence $|L|<|T|^2\leqslant|T|^k=|G|$ for $k\geqslant2$, a contradiction arisen as $G\leqslant L$.

For each of the remaining rows of \cite[Table 1.1]{LX2022}, a direct comparison of the orders of $G$ and $L$ gives $|G|>|L|$, a contradiction. Thus none of the nine cases can occur. 

This proves the claim and completes the proof of Lemma \ref{almost simple}. \qed

In what follows, we provide a necessary condition for $G$ to be solvable.

\begin{lemma}\label{sovable}
$G$ is solvable if and only if $X$ is of affine type or almost simple type.
\end{lemma}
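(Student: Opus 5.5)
We prove the two implications separately, working throughout in the quasiprimitive setting of this section: $X$ is quasiprimitive on $V(\Ga)$, so by \cite[Theorem 1]{P1993} it is of affine, almost simple, simple diagonal, product action or twisted wreath type.

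For the forward direction, assume $G$ is solvable. Then $T$ is a solvable simple group, so $T\cong\ZZ_\ell$ for a prime $\ell$. Counting edges gives $p|V(\Ga)|=2|E(\Ga)|$ with $p$ odd, so $|V(\Ga)|=\ell^k$ is even and $\ell=2$. Hence $G\cong\ZZ_2^k$ is an abelian regular subgroup of $X$. By \cite[Corollary 3.4]{L2003}, $X$ is primitive, and \cite[Theorem 1.1]{L2003} lists the primitive groups containing an abelian regular subgroup: they are affine, almost simple (as in the proof of Lemma \ref{almost simple}), or of product action type with socle built from factors that are themselves affine or almost simple with abelian regular subgroups.

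The last possibility is the main obstacle. For product action type with socle $L^m$ acting on $\Delta^m$, we need $|V(\Ga)|=|\Delta|^m=2^k$ and a regular subgroup $\ZZ_2^k$. The regular abelian subgroup then projects to abelian regular subgroups of the component primitive groups of degree $2^{k/m}$, so the component groups are $\A_{2^{k/m}}$ or $\Sy_{2^{k/m}}$ by \cite[Theorem 1.1]{L2003}. In that case $X_\a$ contains $\A_{2^{k/m}-1}^m$, which is nonsolvable and has order divisible by a large power of $p$, where $p$ is any prime dividing $|X_\a|$. This should contradict Lemma \ref{power}. If such a contradiction is not available, we note instead that the lemma's equivalence may be intended only modulo these product cases, and we would use the explicit list in \cite[Theorem 1.1]{L2003} to check that it contains no product action case compatible with the requirement that $v_p(|X_\a|)=1$ and $X_\a$ is transitive of prime degree.

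For the converse, first suppose $X$ is of affine type. By Lemma \ref{affine}, $T\cong\ZZ_2$, so $G$ is solvable. Next suppose $X$ is of almost simple type. The Claim proved inside Lemma \ref{almost simple} shows directly that $G$ is solvable; equivalently, the lemma itself gives $G\cong\ZZ_2^k$. This completes both implications.
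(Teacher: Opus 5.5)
\textbf{There is a genuine gap: the step that excludes the product action case does not work as written.} Your converse direction, and your reduction of the forward direction to that case via \cite[Corollary 3.4 and Theorem 1.1]{L2003}, agree with the paper.

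You claim that $\A_{2^a-1}^r\leqslant X_\a$ (where $|V(\Ga)|=2^{ar}$) has order divisible by a large power of ``any prime dividing $|X_\a|$''. That is false in general, and it is exactly the point at issue. The prime in Lemma \ref{power} is the valency $p$, and nothing you say forces $p\mid|\A_{2^a-1}|$, i.e.\ $p\leqslant 2^a-1$. A priori $p>2^a$ is possible. Then $p$ divides only $|X:\soc(X)|$ (a divisor of $2^r r!$), $v_p(|\A_{2^a-1}^r|)=0$, and there is no conflict with $v_p(|X_\a|)=1$.

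Nonsolvability of $X_\a$ does not rescue this. For example, $\K_{p,p}$ is $p$-valent with vertex stabilizer $\Sy_{p-1}\times\Sy_p$, which contains $\A_7^2$ once $p\geqslant11$. Your fallback sentence, that the equivalence ``may be intended only modulo these product cases'', concedes that the forward direction is not proved. Also, nonsolvability of $\A_{2^a-1}^r$ needs $a\geqslant3$; the paper disposes of $a\leqslant2$ separately.

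\textbf{The missing idea is to force $p$ to divide $|\soc(X)_\a|$.} Let $M=\soc(X)\cong\A_{2^a}^r$. It is transitive because $X$ is primitive, and $M_\a=\A_{2^a-1}^r\neq1$. Since $M_\a\unlhd X_\a$ and $X_\a^{\Ga(\a)}$ is transitive of prime degree, $M_\a^{\Ga(\a)}$ is either trivial or transitive. If it were trivial, then $M_\a=M_\b$ for every $\b\in\Ga(\a)$. Connectivity and faithfulness would then give $M_\a=1$; this is the argument in the first paragraph of the proof of Lemma \ref{product action}. Hence $p\mid|M_\a|=|\A_{2^a-1}|^r$, so $p\leqslant 2^a-1$ and $v_p(|X_\a|)\geqslant r\geqslant2$, contradicting Lemma \ref{power}.

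With this step inserted your argument is complete and treats all $s$ uniformly. The paper instead splits on $s$. When $\Ga$ is $(X,2)$-arc-transitive it quotes \cite[Lemma 3.3]{LP2008}. For $s=1$ it uses $X_\a\cong(\ZZ_p\rtimes\ZZ_m)\times\ZZ_n$ and compares $|\soc(X)|/|G|\geqslant((2^a-1)!/2)^r$ with $mn<2^{2k}$.
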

\pf\, Based on the results of the Theorems \ref{affine} and \ref{almost simple}, we find that the sufficiency of this theorem is evident. Now we will prove the necessity of this theorem. 

Since $G$ is solvable, we get that $G\cong\ZZ_{\ell}^k$ for some prime $\ell$. The equation $p\cdot {\ell}^k=2|E(\Ga)|$ with odd prime $p$ forces $\ell=2$. Thus $G\cong\ZZ_2^k$, which implies that $X$ contains an abelian regular subgroup $G$, it follows from \cite[Corollary 3.4]{L2003} that $X$ is primitive. If $\Ga$ is $(X,2)$-arc-transitive, applying the \cite[Lemma 3.3]{LP2008}, one yields that $X$ is of affine type or almost simple type, as desired. Now assume that $s=1$, that is, $\Ga$ is $(X, 1)$-transitive. 

Now let's assume in reverse that $X$ is neither an affine group nor an almost simple group. Then, by \cite[Theorem 1.1]{L2003}, $X=(K_1\times\cdots\times K_r).O.P$ and $G=G_1\times\cdots\times G_r$, where $r\geqslant2$, $|G_i|=2^a$, and each $K_i$ is isomorphic to $\A_{2^a}$ or $\Sy_{2^a}$. 
Moreover, $G_i<K_i$ for every $i$, and $k=ar$. 
If $a=1$, then $G_i\cong\ZZ_2\cong K_i$, a contradiction. 
Suppose that $a=2$. Then $G_i\cong\ZZ_2^2$ acts regularly on four points. In both $\A_4$ and $\Sy_4$, the unique regular Klein four subgroup is the normal subgroup consisting of the identity and the three double transpositions. Hence $G_i=\O_2(K_i)$ for every $i$. Thus $G=\O_2(K_1\times\cdots\times K_r)$ is characteristic in $K_1\times\cdots\times K_r\unlhd X$, so $G\unlhd X$. Since $G$ is regular, $X$ is of affine type, a contradiction. Thus $a\geqslant3$.

Combining \cite[Propositions 2--5]{DM1980}, \cite[Theorem 1.1]{GF2012}, \cite[Theorem 1.1]{GLH2016}, \cite[Theorem 2.1]{GHX2015} and \cite[Theorem 1.2]{LLL2018}, we have $X_\a\cong(\ZZ_p\rtimes\ZZ_m)\times\ZZ_n$, where $m$ is a proper divisor of $p-1$ and $n\mid m$. Since $G$ is regular on $V(\Ga)$, we have $X=GX_\a$ and $G\cap X_\a=1$. Hence $|X|=|G||X_\a|=2^kpmn$. As $m,n<p$, it follows that $v_p(|X|)=1$. 
If $p\mid|K_i|$ for some $i$, then $p\mid|K_j|$ for every $j$, because $\A_{2^a}$ and $\Sy_{2^a}$ have the same set of prime divisors. Hence $v_p(|X|)\geqslant r\geqslant2$, a contradiction. Thus $p\nmid|K_i|$ for every $i$.

Recall that $K_i\cong\A_{2^a}$ or $\Sy_{2^a}$, where $a\geqslant3$. 
Then $|K_i|/2^a\geqslant(2^a-1)!/2$. Thus $|K_1\times\cdots\times K_r|/|G|\geqslant((2^a-1)!/2)^r$. From $|X|=|K_1\times\cdots\times K_r||O||P|=2^kpmn$ and $p\nmid|K_i|$, it follows that $((2^a-1)!/2)^r\leqslant mn$. Note that $n\mid m$ and $m$ is a proper divisor of $p-1$. Then $mn\leqslant m^2< p^2\leqslant(2^k-1)^2<2^{2k}=2^{2ar}$. On the other hand, for $a\geqslant3$, $(2^a-1)!/2>2^{2a}$, and hence $((2^a-1)!/2)^r>2^{2ar}$. This contradiction proves that $X$ is of affine or almost simple type.\qed

Now we will deal with the remaining quasiprimitive types, for which $G$ is nonsolvable by Lemma \ref{sovable}.

\begin{lemma}\label{simple diagonal}
$X$ is not of simple diagonal action type. 
\end{lemma}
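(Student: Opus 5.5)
We argue by contradiction: suppose $X$ is quasiprimitive of simple diagonal type. Then $L:=\soc(X)\cong Q^r$ for some nonabelian simple group $Q$ and $r\geqslant2$. A point stabilizer $L_\a$ is a full diagonal subgroup of $L$, so $L_\a\cong Q$. In particular $|V(\Ga)|=|L:L_\a|=|Q|^{r-1}$, and $X\leqslant Q^r.(\mathrm{Out}(Q)\times \Sy_r)$ with $X_\a$ contained in $\mathrm{Aut}(Q)\times\Sy_r$.

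By Lemma \ref{sovable}, $G$ is nonsolvable, so $T$ is nonabelian simple. Lemma \ref{subgroup} then gives $G\leqslant L$. Since $G$ is regular, $L=GL_\a$ and $|T|^k=|Q|^{r-1}$.

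The main tool is Lemma \ref{power}, which says $v_p(|X_\a|)=1$. Because $L\unlhd X$ is transitive, $L_\a\unlhd X_\a$ and $X_\a/L_\a\cong X/L\lesssim \mathrm{Out}(Q)\times\Sy_r$. On the other hand, $L_\a\cong Q$ is nonabelian simple and normal in $X_\a$, so it is a nonsolvable normal subgroup of the vertex stabilizer.

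Now we use the known structure of vertex stabilizers of connected $p$-valent symmetric graphs, namely the results cited before Lemma \ref{power}. The kernel $X_\a^{[1]}$ is a $p'$-group, and in fact is solvable in all listed cases, while $X_\a^{\Ga(\a)}$ is a transitive group of prime degree $p$. Since $L_\a\cong Q$ is simple, there are two possibilities.
\begin{enumerate}[leftmargin=1.2em]
\item If $L_\a\cap X_\a^{[1]}=1$, then $L_\a$ embeds in $X_\a^{\Ga(\a)}$ as a normal, hence transitive, subgroup of degree $p$.
\item Otherwise $L_\a\leqslant X_\a^{[1]}$. This is impossible when $X_\a^{[1]}$ is solvable. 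If the classification allows nonsolvable kernels, $L_\a$ is still normal in $X_\a$ and lies in the kernel, so it acts trivially on $\Ga(\a)$. Connectivity together with normality of $L_\a$ in $X_\a$ then forces $L_\a$ to fix $\Ga(\a)$ pointwise for every $\a$, so $L_\a=1$, a contradiction.
\end{enumerate}
Hence $Q\cong L_\a$ is a nonabelian simple transitive group of prime degree $p$, and $p\mid|Q|$.

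Then $p\mid |Q|^{r-1}=|V(\Ga)|=|G|$, and also $p\mid |L_\a|\leqslant|X_\a|$. Consequently $v_p(|X|)=v_p(|G|)+v_p(|X_\a|)\geqslant (r-1)v_p(|Q|)+1$. That alone gives no contradiction, so more structure is needed. We use that $L_\a$ acts on the $r$ simple factors of $L$ by conjugation, diagonally, and consider the action of $X_\a$ on $\Ga(\a)$. I expect this step to be the main obstacle: showing that a regular subgroup $G\cong T^k$ of $Q^r$ with $k\geqslant 2$ is impossible.

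For that we compare with factorizations. The equality $L=GL_\a$ with $L_\a$ a full diagonal is a factorization of $Q^r$ with one factor diagonal. Projecting to $Q\times Q$ (the first two coordinates) yields a factorization of $Q$ of the form $Q=\pi_1(G\cap\ldots)\,\pi_2(\ldots)$, and the standard argument (as in \cite{DS2020}) shows $G$ must then contain a full factor. In particular $G$ projects so that $Q=AB$ with $A,B$ images of $G$, forcing $\Pi(|T|)=\Pi(|Q|)$. For $r=2$ we would have $|T|^k=|Q|$ with $T$ embedded in a factorization of $Q$, and the order and prime comparison, together with $p\mid|Q|$ and $Q$ of prime degree (so $Q\in\{\A_p,\PSL_d(q),\PSL_2(11),\M_{11},\M_{23}\}$), excludes all cases. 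For $r\geqslant3$ we use that $T^k$ regular on $Q^{r-1}$ with $G$ a subdirect-type subgroup forces $T\cong Q$ and $k=r-1$. Then $G$ is a complement to the diagonal, and its normalizer structure makes $G$ normal in $X$, contradicting quasiprimitivity of type SD, since a regular normal subgroup would make $X$ of twisted wreath or product action type.
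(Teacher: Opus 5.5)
Your setup is sound: $L_\a\cong Q$, $G\leqslant L$, and your dichotomy correctly shows that $L_\a$ acts faithfully and transitively on $\Ga(\a)$. In case (2), the clean argument is that $L_\a\leqslant L_\b$ with $|L_\a|=|L_\b|$ for $\b\in\Ga(\a)$, followed by connectivity. The gap is the final paragraph. The claims that $G$ ``must contain a full factor'', that $T\cong Q$ and $k=r-1$ when $r\geqslant3$, and that the ``normalizer structure'' makes $G$ normal in $X$ are all asserted without proof.

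The last claim is in fact false. For $r\geqslant3$, the full simple diagonal group $Q^r.(\mathrm{Out}(Q)\times\Sy_r)$ contains the regular subgroup $Q^{r-1}\times1\cong Q^{r-1}$. This is a characteristically simple complement to the diagonal, but it is not normal, since $X$ permutes the $r$ simple factors transitively. So no argument purely about regular characteristically simple subgroups or factorizations of $Q^r$ can exclude this type; the contradiction must come from the graph. For $r=2$ there is a one-line argument you missed: $|T|^k=|Q|$ and $p\mid|Q|$ give $v_p(|Q|)\geqslant k\geqslant2$, contradicting $v_p(|L_\a|)\leqslant v_p(|X_\a|)=1$. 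For $r\geqslant3$, however, $k\,v_p(|T|)=r-1$ is perfectly consistent, so the numerics do not close the case.

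The missing idea is already within reach of what you proved. By Burnside's theorem, a transitive group of prime degree is either $2$-transitive or a subgroup of $\AGL(1,p)$. So the nonabelian simple group $L_\a^{\Ga(\a)}\cong Q$ is $2$-transitive on $\Ga(\a)$. Hence $X_\a^{\Ga(\a)}$ is $2$-transitive, and $\Ga$ is $(X,2)$-arc-transitive. Praeger's theorem \cite[Theorem 2]{P1993} says that a group acting quasiprimitively on the vertices of a connected $2$-arc-transitive graph is not of simple diagonal type, which finishes the proof.

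This is essentially the paper's argument. The paper observes that $X_\a\geqslant L_\a\cong Q$ is nonsolvable and deduces $(X,2)$-arc-transitivity from the classification of vertex stabilizers of prime-valent symmetric graphs. It then applies \cite[Theorem 2]{P1993}. In particular, neither the regular subgroup $G$ nor Lemma~\ref{power} is needed.
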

\pf\, Suppose that $X$ is of simple diagonal action type. Let $\a\in V(\Ga)$ and $\soc(X)=L^n$ for some nonabelian simple group $L$ and positive integer $n$. Then $(\soc(X))_\a\cong L$, so $X_\a$ is insolvable. Combining with \cite[Theorem 1.1]{GF2012}, \cite[Theorem 1.1]{GLH2016} and \cite[Theorem 1.2]{LLL2018}, we conclude that the graph $\Ga$ is $(X,2)$-arc-transitive. However, derived from \cite[Theorem 2]{P1993}, we yield that the group $X$ cannot be of simple diagonal action type in this case, a contradiction. \qed

\begin{lemma}\label{product action}
$X$ is not of product action type.
\end{lemma}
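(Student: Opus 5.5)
Suppose, for a contradiction, that $X$ is of product action type. Then $L=\soc(X)\cong Q^r$ with $Q$ nonabelian simple and $r\geqslant2$. By Lemma \ref{sovable}, $G$ is nonsolvable, so $T$ is nonabelian simple. Lemma \ref{subgroup} then gives $G\leqslant L$. Since $G$ is regular, $L$ is transitive and $L=G L_\a$ with $G\cap L_\a=1$, so that
\[|Q|^r=|G|\,|L_\a|=|T|^k|L_\a|.\]
In product action type, $L_\a$ is a subdirect-type subgroup $R^r$-like object: $L$ preserves a product structure $\Delta^r$, and $L_\a$ projects onto a proper (possibly trivial) subgroup $R$ of $Q$ in each coordinate.

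\textbf{Step 1: the shape of $X_\a$.} First I would rule out nonsolvable $X_\a$. If $X_\a$ were nonsolvable, the results cited in the proof of Lemma \ref{simple diagonal} (\cite{GF2012,GLH2016,LLL2018}) would force $s\geqslant2$. By \cite[Theorem 2]{P1993}, a $2$-arc-transitive quasiprimitive group of product action type has the stabilizer of a very restricted form. I expect that this, combined with regularity of $G$, leads to a contradiction. The main source here is Li–Praeger type results on $2$-arc-transitive product-action groups, together with the order comparison below.

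Thus $X_\a$ is solvable, and the structure lemma used in Lemma \ref{sovable} gives $X_\a\cong(\ZZ_p\rtimes\ZZ_m)\times\ZZ_n$ with $n\mid m\mid p-1$. In particular $|L_\a|\leqslant |X_\a|<p^3$ and $v_p(|X|)=1$ (Lemma \ref{power}).

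\textbf{Step 2: order and prime comparison.} Write $G_i=\pi_i(G)\leqslant Q$ for the coordinate projections. Each $\pi_i(G)$ is a quotient of $T^k$, hence $T^{j}$, and is embedded in $Q$. If some prime $z$ divides $|Q|$ but not $|T|$, then $z^r$ divides $|L_\a|$, since $|G|$ contributes nothing at $z$. Because $|L_\a|$ is bounded by $p\cdot m n$ and $v_p(|L_\a|)\leqslant1$, it is small, but this alone does not give a contradiction. Instead I would use the product action structure: $L_\a$ projects into $Q_\delta$ in each coordinate, and the transitivity of $L$ makes $Q_\delta$ a proper nontrivial subgroup. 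Hence $|L_\a|\geqslant|Q:T'|$-type lower bounds arise.

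The cleaner route is to note that $G$ transitive on $\Delta^r$ forces each $\pi_i(G)$ to be transitive on $\Delta$. Since $\pi_i(G)\cong T^{j_i}$ is a subgroup of $Q$ and $T^2$ rarely embeds factorizing $Q$, one can reduce to $\pi_i(G)\cong T$ with $Q=TQ_\delta$ after arguing as in Lemma \ref{almost simple} via \cite{LX2022}. Then $G\leqslant T^r$ diagonal-wise, so $k\leqslant r$ and $|Q|^r=|T|^k|L_\a|\leqslant|T|^r|L_\a|$. This gives $|Q:T|^r\leqslant|L_\a|$. Since $p$ divides $|L_\a|$ to the first power only and the remaining part is less than $p^2$, every prime dividing $|Q:T|$ appears in $|L_\a|$ with exponent at least $r\geqslant2$. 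Only $p$-free primes below $p$ can do that. Taking $R=L_\a$, which embeds in $X_\a$ and is therefore small, this is the situation of Lemma \ref{lem:equal-prime-divisors}, once one shows $\Pi(|T|)=\Pi(|Q|)$ via the same exponent count.

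\textbf{Step 3: eliminating the tables.} For each triple $(Q,T,r)$ in Tables \ref{tab:2.1} and \ref{tab:2.2}, I would compare the required $|L_\a|=|Q:T|^r\cdot|T|^{r-k}$ with the solvable shape $(\ZZ_p\rtimes\ZZ_m)\times\ZZ_n$, which has order $pmn$ with $v_p=1$ and $mn<p^2$. For instance, $|Q:T|^r$ always has a prime with exponent at least $2$, forcing that prime to be at most $p-1$ and dividing $mn$. Then $|Q:T|^2\leqslant mn<p^2$, while $p$ itself must divide $|L_\a|$, hence $|Q|$. Comparing valuations should yield contradictions row by row; for example, $\M_{12}/\M_{11}$ gives $|L_\a|$ divisible by $12^2$, which cannot equal $pmn$ with a metacyclic shape.

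The main obstacle I expect is Step 2: rigorously reducing $\pi_i(G)$ to a single copy of $T$ and controlling $G$ versus the product structure. That is where I would invoke \cite[Theorem 1.1]{LX2022} on factorizations with a characteristically simple factor, as was done in Lemma \ref{almost simple}.
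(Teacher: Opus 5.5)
There is a genuine gap, and it is larger than the obstacle you flag. Your set-up matches the paper: $G$ is nonsolvable by Lemma \ref{sovable}, $G\leqslant L$ by Lemma \ref{subgroup}, and $|Q|^r=|T|^k|L_\a|$ by regularity. But Step~1 is not an argument. \cite[Theorem 2]{P1993} does not exclude product action type for $2$-arc-transitive quasiprimitive groups, and you give no mechanism by which regularity of $G$ would. Moreover, the shape $(\ZZ_p\rtimes\ZZ_m)\times\ZZ_n$, and with it $|X_\a|<p^3$, fails for $p=3$, where the solvable stabilizers $\Sy_4$ and $\Sy_4\times\ZZ_2$ occur. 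This matters because Step~2 leans on \cite{LX2022}, which treats factorizations with a \emph{solvable} factor, and Step~3 leans on the metacyclic shape. None of it is needed. The paper uses only $v_p(|X_\a|)=1$ (Lemma \ref{power}), valid for every stabilizer, together with $p\mid|L_\a|$. The latter holds because $1\neq L_\a\unlhd X_\a$ cannot act trivially on $\Ga(\a)$: since $L$ is transitive and $\Ga$ connected, $L_\a$ would then fix every vertex.

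The missing idea is in Step~2. Your reduction to ``$\pi_i(G)\cong T$, $G\leqslant T^r$ componentwise'' is asserted without proof, and it silently discards configurations that must be excluded by argument. One is $T\cong Q$ with $G$ containing a factor $Q_i$ of $L$, or with some $T_j$ a diagonal strip across two coordinates. Then $\pi_i(G)=Q_i$, your bound $|Q:T|^r\leqslant|L_\a|$ is vacuous, and Lemma \ref{lem:equal-prime-divisors}, which assumes $T<Q$, is unavailable. Another is that $\pi_i(G)$ need not be simple a priori (compare the case $L\cong\A_m^r$, $L_\a\cong\A_{m-1}^r$ of \cite[Theorem 1.3(3)]{DS2020}).

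The paper handles these with two ingredients you lack. First, $L_\a$ is a \emph{full diagonal} subgroup of $R_1\times\cdots\times R_r$, so $L_\a\cong R<Q$. Indeed, if some projection $L_\a\to R_i$ had nontrivial kernel, then $\prod_{j\neq i}Q_j$ would not be semiregular. By Lemma \ref{NQ} and the non-bipartiteness of $\Ga$ ($T^k$ has no subgroup of index $2$), it would then be transitive, giving $Q_i\cong R_i$. Second, the paper first excludes $Q_i\leqslant G$, which would give $|R|=|Q|^{r-k}$, forcing $r=k+1$ and $|R|=|Q|$. Then \cite[Theorem 1.3(3)]{DS2020} leaves exactly three cases: componentwise with $k=r$, disjoint strips, and $\A_m^r$. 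Each is killed by $p$-adic valuations of the exact identity $|L_\a|=|Q|^r/|T|^k$ together with $|L_\a|$ dividing $|Q|$.

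You also never treat product action type with simple diagonal components. There $L_\a$ is a direct product of at least two isomorphic nontrivial groups, and $v_p(|L_\a|)\geqslant2$ is immediate. Finally, Step~3 is superfluous. In the componentwise case regularity gives $|L_\a|=|Q:T|^r$ exactly, so $p\mid|L_\a|$ already yields $v_p(|L_\a|)\geqslant r\geqslant2$. The tables of Lemma \ref{lem:equal-prime-divisors} are built for the non-quasiprimitive setting, where $\overline{G}$ is merely transitive and $\overline{G}\cap L_{\overline{\a}}$ may be nontrivial.
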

\pf\, 
Suppose that $X$ is of product action type. Then, by \cite[Section 2]{P1993}, $X$ is a subgroup of $H\wr\Sy_n$, where $H$ is a quasiprimitive permutation group that is either of almost simple type or simple diagonal action type. Let $N=\soc(X)=L_1\times \ldots\times L_n\cong L^n$, where $n\geqslant2$ is an integer, each $L_i\cong L$ is nonabelian simple. Then $N$ is a normal subgroup of $X$ and $1\neq N_\a=N\cap X_\a\trianglelefteq X_\a$ for any $\a\in V(\Ga)$ (see \cite[Section 2, Remark 2.1]{P1993} for example). 

Since $\Ga$ is $X$-arc-transitive of prime valency $p$, the induced group $X_\a^{\Ga(\a)}\cong X_\a/X_\a^{[1]}$ is primitive of degree $p$. Hence $N_\a^{\Ga(\a)}\unlhd X_\a^{\Ga(\a)}$ is either trivial or transitive. Suppose that $N_\a^{\Ga(\a)}=1$. Then $N_\a$ fixes every vertex in $\Ga(\a)$. For $\b\in\Ga(\a)$, we have $N_\a\leqslant N_\b$. Since $N$ is transitive on $V(\Ga)$, the stabilizers $N_\a$ and $N_\b$ have the same order, and hence $N_\a=N_\b$. The connectivity of $\Ga$ implies that all vertex stabilizers in $N$ are equal. Thus $N_\a$ is contained in the kernel of the action of $N$ on $V(\Ga)$. Since $N\leqslant X\leqslant\Aut(\Ga)$ acts faithfully, this kernel is trivial, contradicting $N_\a\neq1$. Thus $N_\a^{\Ga(\a)}$ is transitive. Since its degree is the prime $p$, it is primitive. Thus $\Ga$ is $N$-locally primitive and $N$-arc-transitive.

To complete the proof, it suffices to show that $H$ is neither of almost simple type nor of simple diagonal action type. So in what follows, we prove the following two claims.

\textbf{Claim 1.} $H$ is not of almost simple type.

Assume that $H$ is of almost simple type. Since $X\leqslant H \wr\Sy_n$, the argument of \cite[Section 2]{P1993} shows that for $\a\in V(\Ga)$, $N_\a$ is a subdirect subgroup of $R_1\times\cdots\times R_n$ where $1\neq R_i<L_i$ and $R_i\cong R_1$ for all $1\leqslant i\leqslant n$, and so, for every $i$, there exist a projection denoted by \begin{equation*}
\pi_i : R_1\times \cdots\times R_n\to R_i, \quad (x_1, x_2, \ldots, x_n)\mapsto x_i, \text{ where } x_j\in R_j \text{ for } 1\leqslant j\leqslant n,
\end{equation*}
such that $\eta_i:=\pi_i|_{N_\a}$ is a surjective homomorphism. Let $N_i=L_1\times \ldots\times L_{i-1}\times L_{i+1}\times \ldots\times L_{n}$ for $i\in\{1, 2, \ldots, n\}$. Then $N_i$ is a maximal normal subgroup of $N$ and $\ker(\eta_i)=N_i\cap N_\a=(N_i)_\a$.

We assert that each $\eta_i$ is injective, and so $N_\a$ is a full diagonal subgroup of $R_1\times \cdots\times R_n$, i.e., $N_\a\cong R_i$ for all $i$. 
On the contrary, we assume that there exist $i_0\in\{1, 2, \ldots, n\}$ such that $\eta_{i_0}$ is not injective. Then $\eta_{i_0}$ has nontrivial kernel $\ker(\eta_{i_0})=(N_{i_0})_\a$, that is $N_{i_0}$ is not semiregular. 
In view of $G$ is a nonabelian characteristically simple group, one yields that $G$ has no subgroups of index 2 and so $\Ga$ is not bipartite. 
Since $\Ga$ is connected and $N$-locally primitive, Lemma \ref{NQ} implies that $N_{i_0}$ has at most two orbits. If it had exactly two orbits, those orbits would form an $N$-invariant bipartition of $\Ga$, a contradiction. Thus $N_{i_0}$ is transitive. Hence $N=N_{i_0}N_\a$, and
$L_{i_0}\cong N/N_{i_0}\cong N_\a/(N_{i_0})_\a\cong R_{i_0}$, contrary to $R_{i_0}<L_{i_0}$. Thus every $\eta_i$ is injective and $N_\a$ is a full diagonal subgroup of $R_1\times\cdots\times R_n$. In particular, $N_\a\cong R_i$ for every $i$.

By Lemma \ref{subgroup}, $G\leqslant N$. Note the $N_\a\cong R_i$ for $1\leqslant i\leqslant n$. The regularity of $G$ implies that $N=GN_\a$ and $G\cap N_\a=1$, from which it follows that $|N|=|G||N_\a|=|G||R_1|$. Since $N_\a$ is transitive on $\Ga(\a)$ and $|\Ga(\a)|=p$ is a prime, we conclude from Lemma \ref{power} that $v_p(|N_\a|)=1$, and so $v_p(|R_1|)=1$. We assert that $G$ does not contain every minimal normal subgroup of $N$. 
On the contrary, we suppose that $G$ contains a minimal normal subgroup of $N$ say $L_{i_0}$ for some $i_0\in \{1, 2, \ldots, n\}$, that is $L_{i_0}\unlhd G\leqslant N$. 
Further, since $L_{i_0}$ is a nonabelian simple group and $G=\prod_{i=1}^kT_i$ for $T_i\cong T$ nonabelian simple group and $k\geqslant2$ integer, applying the Lemma \ref{obs}, we conclude that $L_{i_0}\cong T$ and then $|R_1|=|N_\a|=|N|/|G|=|L_{i_0}|^n/|T|^k=|L_{i_0}|^{n-k}$, which, together with the fact that $v_p(|R_1|)=1$, implies that  $n-k=1$, and so $|L_{i_0}|=|R_1|$, a contradiction arisen as $R_1$ is a proper subgroup of $L_1\cong L_{i_0}$. Hence the assertion holds.

For $i\in\{1, 2, \ldots, n\}$, let the projection $\varphi_i: N\to L_i$ be defined by $(x_1, x_2, \ldots, x_n)\mapsto x_i$ for $x_j\in L_j$ and $1\leqslant j\leqslant n$. Note that $G=\prod_{i=1}^kT_i$ for $T_i\cong T$ nonabelian simple group and $k\geqslant2$ integer and $N=\prod_{i=1}^nL_i$ for $L_i\cong L$ nonabelian simple group and $n\geqslant2$ integer. Employing the result of \cite[Theorem 1.3 (3)]{DS2020} and the last sentence of the first paragraph and the first sentence of the second paragraph of Case 1 on page 154 of reference \cite{DS2020}, we can draw that one of the following holds. Recall that $T_i\leqslant G\leqslant N$ and the notation $\mathcal{P}(*)$ is given in Definition \ref{defi}.

\begin{itemize}
\item [(a)] $k=n$ and $T_i<L_i$ for every $i$;
\item [(b)] The $T_i$ are pairwise disjoint strips in $N$, $|\mathcal{P}(T_i)|\in\{1, 2\}$ for every $i$, and     $|\mathcal{P}(T_i)|=2$ for at least one $i$;  
\item [(c)] $N\cong\A_m^n$, $N_\a=\A_{m-1}^n$ for $|G|=m^n$ and $m\geqslant 10$. 
\end{itemize}
Assume that case (a) or (c) holds. Derived from $|N|=|G||N_\a|$ that $|N_\a|=|N|/|G|=(|L|/|T|)^n$. Since $\Ga$ is $N$-symmetric, we conclude that $v_p(|N_\a|)=1$, and so $n\,v_p(|L|/|T|)=1$ or $n\,v_p(|\A_{m-1}|)=1$ respectively. It forces that $n=1$, a contradiction as $n\geqslant2$. 

Now that case $(b)$ holds. Noting that $\mathcal{P}(T_i)\neq\emptyset$, one yields that there exists $j$ such that $\varphi_j|_{T_i}(T_i)\neq1$ and $\varphi_j|_{T_i}(T_i)\leqslant L_j$, where $i, j\in\{1, 2, \ldots, n\}$. 
On the other hand, since $T_i$ is nonabelian simple, one yields that the projection $\varphi_j|_{T_i}$ is faithful, that is $T_i\cong\varphi_j|_{T_i}(T_i)\leqslant L_j$ and $|T_i|\leqslant |L_j|$. 
Noting that $|N_\a|=|N|/|G|=|L|^n/|T|^k$ and $v_p(|N_\a|)=1$, one yields that $v_p(|L|^n/|T|^k)=1$ with $n, k\geqslant2$ integers and so $p\mid|L|$ and $p\mid |T|$. 

Let $v_p(|T|)=t_1$ and $v_p(|L|)=t_2$ for positive integers $t_1$ and $t_2$. Then $t_1\leqslant t_2$. By the definition of disjoint of two strips (see Definition \ref{defi}), we yields that $\mathcal{P}(T_i)\cap\mathcal{P}(T_j)=\emptyset$ for all $i\neq j$ with $|\mathcal{P}(T_i)|\in\{1, 2\}$ for all $i$ and $|\mathcal{P}(T_i)|=2$ for at least one $i$, and further, $n>k$ and $nt_2-kt_1=1$ as $v_p(|N_\a|)=1$. 
If $t_1<t_2$, then $nt_2>kt_2>kt_1$, implying $nt_2>kt_1+1$, a contradiction. Thus $t_1=t_2=1$ and $n=k+1$. And further, there exists unique $i_0$ such that $|\mathcal{P}(T_{i_0})|=2$ and $|\mathcal{P}(T_j)|=1$ for all $j\neq i_0$. Since $N_\a\cong R_i<L_i$ for every $1\leqslant i\leqslant n$, one yields $|L|^{k+1}=|T|^k|N_\a|$ divides $|T|^k|L|$ and $|L|^k$ divides $|T|^k$. It follows that $|L|$ divides $|T|$ and $|L|=|T|$. Then $|L|=|N_\a|=|R_i|$, which is impossible as $R_i$ is a proper subgroup of $L_i\cong L$ for every $1\leqslant i\leqslant n$. Hence the Claim 1 holds. 

\textbf{Claim 2.} $H$ is not of simple diagonal action type.

Suppose that $H$ is of simple diagonal action type. The first paragraph of the proof shows that $N_\a^{\Ga(\a)}$ is transitive on $\Ga(\a)$, so $p\mid|N_\a|$. By \cite[Section 3, pp. 303--304]{BP2003}, the group $N_\a$ is a direct product of at least two isomorphic nontrivial groups. Hence $v_p(|N_\a|)\geqslant2$, and therefore $v_p(|X_\a|)\geqslant2$, contrary to Lemma \ref{power}. Hence $H$ is not of simple diagonal action type and so the Claim 2 holds. 

In summary, we have proven the correctness of the Lemma \ref{product action}. \qed

\begin{lemma}\label{twisted wreath action}   
If $X$ is of twisted wreath action type, then $\Ga=\Cay(G, S)$, where $G=\soc(X)$, $s\leqslant2$ and $S$ consists of involutions. 
\end{lemma}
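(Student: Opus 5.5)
Let $N=\soc(X)=L^n$ with $L$ nonabelian simple and $n\geqslant2$. In twisted wreath type $N$ is the unique minimal normal subgroup of $X$ and acts regularly on $V(\Ga)$, so $X=N\rtimes X_\a$ and $|N|=|G|$. By Lemma \ref{sovable}, $G$ is nonsolvable, so $T$ is nonabelian simple.

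The first step is to show $G=N$. Lemma \ref{subgroup} gives $G\leqslant N$. Both groups are regular, so $|G|=|N|$, and therefore $G=N$. Hence $\Ga=\Cay(N,S)$ with $X_\a\leqslant\Aut(N,S)$; in particular $\Ga$ is $X$-normal. From $|T|^k=|L|^n$ and $T$ embedding in a simple factor, one also gets $T\cong L$ and $k=n$, though this is not needed.

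Next, the bound $s\leqslant2$. Since $X$ is quasiprimitive of twisted wreath type, \cite[Theorem 2]{P1993} (Praeger's theorem on $2$-arc-transitive quasiprimitive groups) excludes twisted wreath type when $\Ga$ is $(X,2)$-arc-transitive with $s\geqslant2$... I must be careful here, because Praeger's list actually allows twisted wreath type for $s\geqslant2$. The intended argument is probably via the stabilizer. In twisted wreath type, $X_\a\cong P$ is a transitive subgroup of $\Sy_n$, and $X_\a$ embeds in $\Aut(N)=\Aut(L)\wr\Sy_n$ by conjugation. The known stabilizers of $p$-valent $s$-arc-transitive graphs with $s\geqslant3$ are the groups from \cite{DM1980,GF2012,GLH2016,GHX2015,LLL2018}. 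I would use \cite[Theorem 2]{P1993} only if it forbids twisted wreath type for $s\geqslant 3$. Otherwise, I would rule out $s\geqslant3$ from $v_p(|X_\a|)=1$ (Lemma \ref{power}) together with the fact that $X_\a$ acts on the $n$ simple factors. I expect this step to be the main obstacle, and I would first look for the exact statement in \cite{P1993} or in Li's work on twisted wreath $s$-arc-transitive graphs.

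Finally, I would show that $S$ consists of involutions. Since $\Ga$ is $X$-normal and arc-transitive, $X_\a\leqslant\Aut(N,S)$ is transitive on $S$, so all elements of $S$ have the same order. Now $S=S^{-1}$ and $|S|=p$ is odd, so the inversion map $s\mapsto s^{-1}$ is an involution on a set of odd size and has a fixed point. That fixed point is an element $s$ with $s=s^{-1}$, which is an involution. By transitivity every element of $S$ is then an involution, which completes the statement of Lemma \ref{twisted wreath action}.
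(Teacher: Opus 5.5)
Your arguments that $G=\soc(X)$ and that $S$ consists of involutions are correct and coincide with the paper's. The genuine gap is the bound $s\leqslant2$, which you leave unproved. Neither fallback you mention would supply it. \cite[Theorem 2]{P1993} explicitly allows twisted wreath type for $2$-arc-transitive quasiprimitive groups and gives no bound on $s$ there. Lemma \ref{power} carries no information about $s$: for $p=3$ the stabilizers $\Sy_3\times\ZZ_2$, $\Sy_4$, $\Sy_4\times\ZZ_2$ of $3$-, $4$-, $5$-arc-regular actions all have $v_3(|X_\a|)=1$. So combining it with the action of $X_\a$ on the simple factors gives no evident route to a contradiction.

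The missing idea is that you had already done the essential work. Once $G=\soc(X)\unlhd X$ is regular, $X=G\rtimes X_1$ with $X_1\leqslant\Aut(G,S)$, and an $X$-normal Cayley graph of valency at least $3$ is never $(X,3)$-arc-transitive. The paper cites this as \cite[Proposition 2.3]{L2001}, but it is elementary. Take $x\in S$ and $y\in S\setminus\{x,x^{-1}\}$, which exists since $|S|=p\geqslant3$. Since $\Ga(x)=Sx$, $y\neq x$ and $yx\neq1$, the sequence $(y,1,x,yx)$ is a $3$-arc. The stabilizer in $X$ of the $2$-arc $(y,1,x)$ is $(X_1)_{x,y}$. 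It consists of automorphisms of $G$ fixing $x$ and $y$, so it also fixes $yx$. If $\Ga$ were $(X,3)$-arc-transitive, this stabilizer would be transitive on $\Ga(x)\setminus\{1\}$, a set of size $p-1\geqslant2$, which is a contradiction. Hence $s\leqslant2$, and with this step inserted your proof is complete.
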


\pf\, Suppose that $X$ is of twisted wreath action type. Then $\soc(X)$ is nonabelian and regular on $V(\Ga)$. Since $G$ is a regular nonabelian characteristically simple subgroup of $X$, Lemma \ref{subgroup} gives $G\leqslant\soc(X)$. Both $G$ and $\soc(X)$ are regular on $V(\Ga)$, so $G=\soc(X)$. Hence $\Ga=\Cay(G,S)$ and $G\unlhd X$. Let $1$ be the identity vertex of $G$. Since $G\unlhd X$ and $G$ is regular, we have $X=G\rtimes X_1$ and $X_1\leqslant\Aut(G,S)$. The connectedness of $\Ga$ gives $\langle S\rangle=G$. Hence the action of $X_1$ on $S$ is faithful, because an element of $X_1\leqslant\Aut(G)$ fixing every element of $S$ fixes every element of $G$. Since $\Ga$ is undirected, $S=S^{-1}$. Also, $|S|=p$ is odd. Every element of $S$ which is not an involution occurs together with its distinct inverse, so $S$ contains at least one involution. Let $I=\{x\in S\mid x^2=1\}$. Since $X_1\leqslant\Aut(G,S)$, the set $I$ is $X_1$-invariant. Since $\Ga$ is $X$-arc-transitive, $X_1$ is transitive on $S$. Hence $I\neq\emptyset$ implies $I=S$, and every element of $S$ is an involution. Finally, $G=\soc(X)\unlhd X$ is regular on $V(\Ga)$ and $\Ga$ has valency $p\geqslant3$. By \cite[Proposition 2.3]{L2001}, $\Ga$ is not $(X,3)$-arc-transitive. Since $\Ga$ is $(X,s)$-transitive, we have $s\leqslant2$. This completes the proof. \qed

We now combine the preceding lemmas to prove Theorem \ref{thm:quasiprimitive}.

\begin{proof}[Proof of Theorem \ref{thm:quasiprimitive}]
Let $\Ga=\Cay(G,S)$ be connected and $p$-valent, where $G=T_1\times\cdots\times T_k\cong T^k$, $k\geqslant2$, each $T_i\cong T$ is simple, and $p$ is an odd prime. Suppose that $G<X\leqslant\Aut(\Ga)$, $\Ga$ is $X$-arc-transitive, and $X$ is quasiprimitive on $V(\Ga)$. By \cite[Theorem 1]{P1993}, $X$ is of affine, almost simple, simple diagonal, product action, or twisted wreath action type.

If $X$ is of affine type, then Lemma \ref{affine} gives $G=\soc(X)\cong\ZZ_2^k$, and so Theorem \ref{thm:quasiprimitive} (1) holds. If $X$ is of almost simple type, then Lemma \ref{almost simple} gives $\Ga\cong\K_{2^k}$, and so Theorem \ref{thm:quasiprimitive} (2) holds. The simple diagonal and product action cases are excluded by Lemmas \ref{simple diagonal} and \ref{product action}, respectively. It remains to consider the twisted wreath action case. By Lemma \ref{twisted wreath action}, $\Ga=\Cay(\soc(X),S)$ and every element of $S$ is an involution. Thus Theorem \ref{thm:quasiprimitive} (3) holds, and the proof is complete. 
\end{proof}

We now turn to the non-quasiprimitive case and consider the normal quotient with respect to a maximal intransitive normal subgroup of $X$.

\section{The non-quasiprimitive case}\label{sec4}

In this Section, we always let $\Ga=\Cay(G,S)$ satisfy Hypothesis \ref{hypothesis}, and assume that $X$ is not quasiprimitive on $V(\Ga)$. Thus $\Ga$ is a connected $p$-valent $X$-arc-transitive Cayley graph of $G=T_1\times\cdots\times T_k\cong T^k$, where $k\geqslant2$, each $T_i\cong T$ is simple, $p$ is an odd prime, and $G<X\leqslant\Aut(\Ga)$. Let $N$ be a maximal intransitive normal subgroup of $X$, $\overline{X}=X/N$ and $\overline{G}=GN/N\cong T^\ell$ for some $1\leqslant\ell\leqslant k$. Now let $L=\soc(\overline{X})=Q_1\times\cdots\times Q_r\cong Q^r$ for some $r\geqslant1$, where $Q$ is simple. The non-quasiprimitive case is described by the following theorem.

\begin{theorem}\label{thm:non-quasiprimitive}
Let $\Ga$ be a graph satisfying Hypothesis \ref{hypothesis}, and suppose that $X$ is not quasiprimitive on $V(\Ga)$. Then $\Ga$ is an $N$-cover of $\Ga_N$ if $\Ga_N\ncong\K_2$. Moreover, one of the following holds.
\begin{enumerate}[leftmargin=1.2em]
\item $\Ga_N$ is a Cayley graph of $\overline{G}\cong\ZZ_2^\ell$.
\item $\overline{X}$ is of affine or twisted wreath action type, and $\Ga_N$ is a Cayley graph of $\soc(\overline{X})$. In the affine case, $\Ga_N\cong\K_8$ and $\ell=1$.
\item $\overline{X}$ is of almost simple type, and either $\Ga_N$ is a complete graph, or $\overline{G}=T$ and either $\overline{G}\unlhd\overline{X}$ or the pair $(\soc(\overline{X}), T)$ is listed in Tables \ref{tab:simple-nonclassical} and \ref{tab:simple-classical-nonsolv}.
\item $\overline{X}$ is of product action type, and either $\overline{G}\unlhd\overline{X}$ or $(Q,T,r)$ is listed in Table \ref{tab:PA-final}.
\end{enumerate}
\end{theorem}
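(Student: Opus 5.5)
We begin with the normal quotient. Since $N$ is intransitive and $\Ga$ is $X$-locally primitive (prime valency), Lemma \ref{NQ} applies as soon as $N$ has at least three orbits. If $N$ had exactly two orbits, they would give an $X$-invariant bipartition, so $G$ would have a subgroup of index $2$. For $T$ nonabelian this is impossible, and for $T\cong\ZZ_2$ this is exactly the situation $\Ga_N\cong\K_2$. Thus if $\Ga_N\ncong\K_2$, Lemma \ref{NQ} shows that $N$ is semiregular, $\Ga$ is an $N$-cover of $\Ga_N$, $\Ga_N$ is $\overline{X}$-arc-transitive of valency $p$, and $\overline{X}_{\overline{\a}}\cong X_\a$. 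Maximality of $N$ makes $\overline{X}$ quasiprimitive on $V(\Ga_N)$: any nontrivial normal subgroup of $\overline{X}$ lifts to a normal subgroup of $X$ properly containing $N$, which must be transitive. Moreover $\overline{G}\cong G/(G\cap N)\cong T^\ell$ by Lemma \ref{obs}, and $\overline{G}$ is transitive on the $N$-orbits. It need not be regular, since $|\overline{G}|$ may exceed $|V(\Ga_N)|$; the stabiliser is $\overline{G}_{\overline\a}\cong GN\cap(NX_\a)/N$. We must track this throughout.

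If $T\cong\ZZ_2$, then $\overline{G}\cong\ZZ_2^\ell$. Because $G$ is regular and $N$ is semiregular, $\overline{G}$ acts regularly on the $N$-orbits, so $\Ga_N$ is a Cayley graph of $\ZZ_2^\ell$. This gives (1). From now on $T$ is nonabelian, and we split according to the quasiprimitive type of $\overline{X}$ (\cite[Theorem 1]{P1993}), reusing the arguments of Section \ref{sec3} with the regularity hypothesis weakened to transitivity.

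\textbf{Affine type.} Here $\soc(\overline{X})$ is regular on $V(\Ga_N)$, so $\Ga_N$ is a Cayley graph of $\soc(\overline{X})\cong\ZZ_2^d$ (the parity argument of Lemma \ref{affine}). The nonabelian group $\overline{G}$ then embeds as a transitive subgroup of an affine group $\AGL_d(2)$. Combining $v_p(|\overline X_{\overline\a}|)=1$ (Lemma \ref{power}) with the known stabilisers, we will show that only $d=3$, $\overline{X}=\AGL_3(2)$, $\overline{G}\cong\PSL_2(7)$, $\ell=1$ and $\Ga_N\cong\K_8$ survive.

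\textbf{Twisted wreath type.} Lemma \ref{subgroup} gives $\overline{G}\leqslant\soc(\overline{X})$. Since $\soc(\overline{X})$ is regular, we get (2).

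\textbf{Simple diagonal type.} This is excluded exactly as in Lemma \ref{simple diagonal}: the stabiliser is nonsolvable, so the action is $2$-arc-transitive, contradicting \cite[Theorem 2]{P1993}.

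\textbf{Almost simple type.} Lemma \ref{subgroup} puts $\overline{G}\leqslant\soc(\overline{X})=Q$, so $Q=\overline{G}\,Q_{\overline\a}$ is a factorisation with a characteristically simple factor. Either $\overline{G}\unlhd\overline{X}$, or $\overline{X}$ is $2$-transitive and $\Ga_N$ is complete, or we invoke \cite{LX2022} and \cite{LWX2023} for factorisations of almost simple groups with the stabiliser solvable or nonsolvable respectively. Order comparisons as in Lemma \ref{almost simple} should force $\ell=1$ and leave the pairs recorded in Tables \ref{tab:simple-nonclassical} and \ref{tab:simple-classical-nonsolv}.

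\textbf{Product action type.} Again $\overline{G}\leqslant L=Q^r$, and we follow Claims 1 and 2 of Lemma \ref{product action}. The case where the component $H$ is of simple diagonal type dies by Lemma \ref{power}. When $H$ is almost simple, $L_{\overline\a}$ is a diagonal subgroup $R$ isomorphic to a proper subgroup of $Q$. Using \cite[Theorem 1.3]{DS2020}, either $\overline{G}\unlhd\overline{X}$, or each $T_i$ projects properly into the $Q_j$. In the latter case $|Q|^r=|T|^r|R|/|T^r\cap R|$, and $v_p$ together with the prime sets of the orders force $\Pi(|T|)=\Pi(|Q|)$. Lemma \ref{lem:equal-prime-divisors} then yields the triples in Table \ref{tab:PA-final}.

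The main obstacle is the product action case, and to a lesser extent the almost simple case. In the quasiprimitive setting, regularity of $G$ killed these cases; now $\overline{G}$ is merely transitive, so the counting equation $|L|=|\overline{G}||L_{\overline\a}|/|\overline{G}\cap L_{\overline\a}|$ has an extra unknown. The critical step is extracting the equal-prime-divisor condition from this equation so that Lemma \ref{lem:equal-prime-divisors} applies. I would first try to show that any prime dividing $|Q|$ but not $|T|$ would appear in $|R|$ with multiplicity at least $r\geqslant2$, which contradicts $R\lesssim Q$.
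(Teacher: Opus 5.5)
Your skeleton (Lemma \ref{NQ} for the quotient, quasiprimitivity of $\overline{X}$ from the maximality of $N$, then a split by the O'Nan--Scott type of $\overline{X}$) is the paper's. The reduction, simple diagonal and twisted wreath parts are fine. One small repair: for $T\cong\ZZ_2$, the regularity of $\overline{G}$ on $V(\Ga_N)$ comes from $\overline{G}$ being abelian, transitive and faithful. It does not follow from ``$G$ regular, $N$ semiregular'', which gives regularity exactly when $N\leqslant G$.

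The first genuine gap is the almost simple case. The result of \cite{LWX2023} used in Lemma \ref{almost simple} classifies \emph{exact} factorizations, so it needs $\overline{G}\cap\overline{X}_{\overline{\a}}=1$. As you noted yourself, $\overline{G}$ need not be regular here, so for nonsolvable $\overline{X}_{\overline{\a}}$ your plan has no valid input. Nor will order comparisons force $\ell=1$. The case $\ell\geqslant2$ is settled by \cite[Theorem 1.3(1)]{DS2020}, which forces $\soc(\overline{X})\cong\A_m$ in its natural action; hence $\overline{X}$ is $2$-transitive and $\Ga_N\cong\K_{p+1}$ (Lemma \ref{lem:AS-large}). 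For $\ell=1$ the paper relies on results for a transitive, not necessarily regular, simple subgroup:
\cite[Theorem 1.2]{L2025}, giving almost simple type or $\overline{X}\cong\AGL_3(2)$ with $\Ga_N\cong\K_8$;
\cite{LX2022} together with census and Magma arguments when $L_{\overline{\a}}$ is solvable;
\cite{LLarXiv2021} when $L_{\overline{\a}}$ is nonsolvable.
That is where Tables \ref{tab:simple-nonclassical} and \ref{tab:simple-classical-nonsolv} come from.

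The affine and product action cases are likewise only promised. For affine type with $\ell\geqslant2$, the missing input is \cite[Theorem 1.3]{DS2020}: $\overline{G}\cong\SL_3(2)^\ell$ on $8^\ell$ points with $\overline{G}_{\overline{\a}}=K_1\times\cdots\times K_\ell$ and $|K_i|=21$. Since $K_i=\N_{T_i}(K_i)$, every $\overline{G}_{\overline{\a}}$-orbit other than $\{\overline{\a}\}$ has length divisible by $7$, so $p=7$. Then your idea does finish the job: $v_7(|\overline{X}_{\overline{\a}}|)\geqslant\ell\geqslant2$ contradicts Lemma \ref{power}. (The paper uses a connectivity argument at this point.)

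In the product action case your derivation of $\Pi(|T|)=\Pi(|Q|)$ is right. However, \cite[Theorem 1.3(3)]{DS2020} has three alternatives, not two. Besides (a) $T_i<Q_i$ componentwise, there are (b) disjoint strips with $|\mathcal{P}(T_i)|\leqslant2$ and $(T,Q)$ as in Table \ref{tab:groups}, and (c) $L\cong\A_t^r$ on $t^r$ points. These must be excluded separately: (b) by $\Pi(|Q|)\neq\Pi(|T|)$ for the non-isomorphic pairs and by $|\overline{G}|<|V(\Ga_N)|$ when $T\cong Q$, and (c) by $v_p(|L_{\overline{\a}}|)\geqslant r$.

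More seriously, Lemma \ref{lem:equal-prime-divisors} only puts $(Q,T,r)$ in Tables \ref{tab:2.1} and \ref{tab:2.2}, a strictly larger list than Table \ref{tab:PA-final}. You must still remove $(\PSp_4(q),\PSL_2(q^2))$, $(\PSU_4(2),\A_6)$, $(\Sp_6(2),\A_8)$, $(\M_{11},\PSL_2(11))$, $(\M_{12},\M_{11})$ and the like. You must also restrict $(\M_{24},\M_{23})$ to $r=2$ and $p\in\{5,7\}$ with the listed stabilizers. This needs:
the fact that $L_{\overline{\a}}\cong R<Q$ with $|Q:T|^r\mid|R|$;
ATLAS maximal-subgroup indices;
Lemma \ref{lem:pd} and the pentavalent and heptavalent stabilizer classifications;
for $\M_{12}$, the $2$-transitivity of $\M_{11}$ on $12$ points.
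Without this you do not reach the stated table.
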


We proceed to establish some technical lemmas.

\begin{lemma}\label{lem:quotient-reduction}
Under the hypotheses of Theorem \ref{thm:non-quasiprimitive}, the group $\overline{X}$ is a quasiprimitive permutation group on $V(\Ga_N)$, and $\overline{G}\cong T^\ell$ for some $1\leqslant\ell\leqslant k$. Moreover, $\overline{G}$ is transitive on $V(\Ga_N)$, and one of the following holds.
\begin{enumerate}[label=(\roman*),leftmargin=2em]
\item $G$ is nonabelian. Then $|V(\Ga_N)|\geqslant3$, $N$ is semiregular on $V(\Ga)$, $\Ga$ is an $N$-cover of $\Ga_N$, and $\Ga_N$ is $\overline{X}$-arc-transitive.
\item $G$ is abelian. Then either $\Ga_N\cong\K_2$, or $\overline{G}\cong\ZZ_2^\ell$ for some $\ell\geqslant2$ and $\Ga_N$ is a Cayley graph of $\overline{G}$. In the latter case, $\Ga$ is an $N$-cover of $\Ga_N$.
\end{enumerate}
\end{lemma}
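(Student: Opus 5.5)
We first establish the general facts. Since $N$ is a maximal intransitive normal subgroup of $X$, every normal subgroup $M/N$ of $\overline{X}$ with $M>N$ is transitive on $V(\Ga)$. Hence $M/N$ is transitive on the $N$-orbits, so $\overline{X}$ is quasiprimitive on $V(\Ga_N)$. The image $\overline{G}=GN/N\cong G/(G\cap N)$ is a quotient of $G\cong T^k$. By Lemma \ref{obs} (or directly, when $T$ is abelian and $G\cong\ZZ_\ell^k$ is elementary abelian), $G\cap N\cong T^{k-\ell}$, so $\overline{G}\cong T^\ell$. Since $N$ is intransitive and $G$ is transitive, $\overline{G}\neq1$, which gives $\ell\geqslant1$. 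Transitivity of $\overline{G}$ on $V(\Ga_N)$ is inherited from $G$.

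For case (i), where $T$ is nonabelian, the key step is to exclude $N$ having at most two orbits. $N$ is intransitive, so it has at least two orbits. If it had exactly two, these would form an $X$-invariant bipartition of $\Ga$. The stabiliser in $G$ of one part would then be a subgroup of index $2$ in $G$, which is impossible because $G$ is perfect. So $N$ has at least three orbits, giving $|V(\Ga_N)|\geqslant3$. Since a prime-valent arc-transitive graph is $X$-locally primitive, Lemma \ref{NQ}(1) then yields that $N$ is semiregular, $\Ga$ is an $N$-cover of $\Ga_N$, and $\Ga_N$ is $\overline{X}$-locally primitive. Moreover $\overline{X}$ is arc-transitive on $\Ga_N$, because $X$ is arc-transitive on $\Ga$ and the edges of $\Ga$ map onto the arcs of $\Ga_N$.

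For case (ii), where $G$ is abelian, the parity argument in the proof of Lemma \ref{affine} gives $G\cong\ZZ_2^k$. If $N$ has exactly two orbits, then $\Ga_N\cong\K_2$. Otherwise Lemma \ref{NQ} applies as above: $N$ is semiregular and $\Ga$ is an $N$-cover of $\Ga_N$, so $\Ga_N$ has valency $p$. Now $\overline{G}\cong\ZZ_2^\ell$ is transitive on $V(\Ga_N)$. Being abelian, it acts regularly, so $\Ga_N$ is a Cayley graph of $\overline{G}$. The order is $|V(\Ga_N)|=2^\ell$ and must exceed $p\geqslant3$, so $\ell\geqslant2$.

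The main obstacle is the step excluding two $N$-orbits in the nonabelian case. Here one must check that the bipartition is really preserved by $G$, which holds because the $N$-orbits are blocks for $X$. One must also check that $G$ acting nontrivially on the two parts produces an index-$2$ subgroup, which contradicts the fact that $T^k$ is perfect.
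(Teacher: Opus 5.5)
Your proof is correct and follows essentially the paper's route: maximality of $N$ gives quasiprimitivity, the index-$2$ argument forces at least three $N$-orbits, and then Lemma \ref{NQ} applies. One point you leave implicit is faithfulness of $\overline{X}$ on $V(\Ga_N)$, which you need later to conclude that $\overline{G}$ is regular; it follows at once from your observation, since a nontrivial kernel $K/N$ would be transitive while fixing every $N$-orbit. The only real variation is in case (ii): you obtain $G\cong\ZZ_2^k$ up front from $p|V(\Ga)|=2|E(\Ga)|$ and split on the number of $N$-orbits, whereas the paper splits on $\ell=1$ versus $\ell\geqslant2$ and derives $t=2$ separately in each case, so your ordering is slightly more economical.
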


\pf\, Let $K$ be the kernel of the action of $X$ on the set of $N$-orbits. Then $N\leqslant K\unlhd X$. Since $K$ is intransitive, the maximality of $N$ gives $K=N$. Hence $\overline{X}=X/N$ is a quasiprimitive permutation group on $V(\Ga_N)$. Since $G$ is transitive on $V(\Ga)$, the group $\overline{G}=GN/N\cong G/(G\cap N)$ is transitive on $V(\Ga_N)$. If $T$ is nonabelian, Lemma \ref{obs} gives $G\cap N\cong T^{k-\ell}$ and hence $\overline{G}\cong T^\ell$ for some $1\leqslant\ell\leqslant k$. If $T\cong\ZZ_t$ for a prime $t$, then $G$ is an elementary abelian $t$-group and every quotient of $G$ is elementary abelian, so again $\overline{G}\cong T^\ell$ for some $1\leqslant\ell\leqslant k$.

Suppose first that $G$ is nonabelian. Since $N$ is intransitive, $|V(\Ga_N)|\geqslant2$. If $|V(\Ga_N)|=2$, then the action of $G$ on the two $N$-orbits yields a subgroup of index $2$ in $G$, contrary to the result of Lemma \ref{obs}. Thus $|V(\Ga_N)|\geqslant3$. Since $\Ga$ is $p$-valent and $X$-arc-transitive, it is $X$-vertex-transitive and locally primitive. By Lemma \ref{NQ}, $N$ is semiregular on $V(\Ga)$, $\Ga$ is an $N$-cover of $\Ga_N$, and $\Ga_N$ is $\overline{X}$-arc-transitive. Hence (i) holds.

Now suppose that $G$ is abelian. Then $T\cong\ZZ_t$  and  $\overline{G}\cong\ZZ_t^\ell$ for some prime $t$ and positive integer $\ell$. Since $\overline{G}$ is abelian and transitive on $V(\Ga_N)$, its vertex stabilizers are normal in $\overline{G}$. As $\overline{G}\leqslant\overline{X}$ acts faithfully on $V(\Ga_N)$, these stabilizers are trivial. Hence $\overline{G}$ is regular and $\Ga_N$ is a Cayley graph of $\overline{G}$.

If $\ell=1$, then $\Ga_N$ is a connected Cayley graph of $\ZZ_t$. Its valency is either $1$ or $p$. If its valency is $1$, then $\Ga_N\cong\K_2$. If its valency is $p$, then $t$ cannot be odd because every undirected Cayley graph of a group of odd order has even valency. Hence $t=2$, which again gives $\Ga_N\cong\K_2$.

Assume that $\ell\geqslant2$. Then $|V(\Ga_N)|=t^\ell\geqslant4$. By Lemma \ref{NQ}, $N$ is semiregular on $V(\Ga)$, $\Ga$ is an $N$-cover of $\Ga_N$, and $\Ga_N$ is a $p$-valent $\overline{X}$-arc-transitive Cayley graph of $\overline{G}$. Since $\Ga_N$ is undirected and has odd valency, its connection set contains an involution. Thus $t=2$, and so $\overline{G}\cong\ZZ_2^\ell$. This proves (ii). 

In summary, we have proven the correctness of the Lemma \ref{lem:quotient-reduction}.\qed

The case where $G$ is abelian is now settled. To prove the Theorem , we only need to deal with nonabelian cases. Thus, by Lemma \ref{lem:quotient-reduction}, $\overline{G}\cong T^\ell$ for some $1\leqslant\ell\leqslant k$, $|V(\Ga_N)|\geqslant3$, $\Ga$ is an $N$-cover of $\Ga_N$, and $\Ga_N$ is $\overline{X}$-arc-transitive.

\begin{lemma}\label{lem:simple-quotient}
Suppose that $\overline{G}=T$. Then one of the following holds.
\begin{enumerate}[label=(\roman*),leftmargin=2em]
\item $\Ga_N$ is a complete graph.
\item $\overline{X}$ is of almost simple type, and $\overline{G}=\soc(\overline{X})\unlhd\overline{X}$.
\item $\overline{X}$ is of almost simple type, $T<\soc(\overline{X})$, and if $\Ga_N$ is not complete, then $\Ga_N$ is $\soc(\overline{X})$-arc-transitive, and the pair $(\soc(\overline{X}), T)$ is listed in Tables \ref{tab:simple-nonclassical} and \ref{tab:simple-classical-nonsolv}.
\end{enumerate}
\end{lemma}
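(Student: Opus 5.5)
Throughout, we use the setting fixed after Lemma~\ref{lem:quotient-reduction}: $\overline{X}$ is quasiprimitive on $V(\Ga_N)$, $|V(\Ga_N)|\geqslant3$, $\Ga_N$ is $p$-valent and $\overline{X}$-arc-transitive, and $\overline{G}=T$ is a transitive nonabelian simple subgroup of $\overline{X}$. We first identify the quasiprimitive type of $\overline{X}$, then split according to whether $T=\soc(\overline{X})$.

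\textbf{Step 1: $\overline{X}$ is almost simple, unless $\Ga_N$ is complete.} We go through the types in \cite[Theorem 1]{P1993}.

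\emph{Abelian socle.} If $\soc(\overline{X})$ were abelian, then $|V(\Ga_N)|$ would be a prime power. We then look at $\overline{X}\leqslant\AGL_d(\ell)$ containing a transitive nonabelian simple subgroup $T$. By the classification of transitive simple subgroups of affine groups (Guralnick's prime-power index theorem), this forces $\Ga_N$ to be complete. An example is $\PSL_2(7)<\AGL_3(2)$, acting on $8$ points and giving $\K_8$.

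\emph{Nonabelian socle.} Lemma~\ref{subgroup} gives $T\leqslant\soc(\overline{X})=Q^r$. For simple diagonal type we argue exactly as in Lemma~\ref{simple diagonal}: the stabilizer is nonsolvable, so $\Ga_N$ is $2$-arc-transitive, which is excluded by \cite[Theorem 2]{P1993}. For product action and twisted wreath types with $r\geqslant2$, a simple transitive $T\leqslant Q^r$ must act transitively on the product set $\Delta^r$. Projecting onto the coordinates, $T$ embeds diagonally, and its orbit on $\Delta^r$ has size at most $|T|$. Since $|\Delta|^r>|T|$ in these configurations, this is a contradiction unless $r=1$. Here we would also invoke the factorization arguments of \cite[Theorem 1.3]{DS2020}, as in Claim~1 of Lemma~\ref{product action}. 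Hence $\overline{X}$ is almost simple, with socle $Q$ and $T\leqslant Q$.

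\textbf{Step 2: the case $T=Q$.} If $T=Q$, then $\overline{G}=\soc(\overline{X})\unlhd\overline{X}$, which is (ii).

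\textbf{Step 3: the case $T<Q$.} Assume now $T<Q$. First, $Q\unlhd\overline{X}$ is transitive by quasiprimitivity. Repeating the argument of the second paragraph of Lemma~\ref{product action}, $Q_{\overline{\a}}^{\Ga_N(\overline{\a})}$ is nontrivial, hence transitive of prime degree. Therefore $\Ga_N$ is $Q$-arc-transitive.

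Next, $Q=TQ_{\overline{\a}}$ is a factorization of the simple group $Q$ with a simple factor $T$. In addition, $v_p(|Q_{\overline{\a}}|)=1$ by Lemma~\ref{power}, and $Q_{\overline{\a}}$ has the prime-valent structure from \cite{DM1980,GF2012,GLH2016,GHX2015,LLL2018}.

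We then apply the classification of factorizations of almost simple groups (Liebeck--Praeger--Saxl, together with \cite[Theorem 1.1]{LX2022} and \cite[Theorem 1.2]{LWX2023} when the stabilizer is nonsolvable). Running through the possible $(Q,T,Q_{\overline{\a}})$, and discarding those whose stabilizer is not compatible with a $p$-valent arc-transitive action, produces the lists in Tables~\ref{tab:simple-nonclassical} and~\ref{tab:simple-classical-nonsolv}. When $Q_{\overline{\a}}$ is $2$-transitive on the cosets, the graph is complete, which is case (i).

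\textbf{Main obstacle.} The hardest step is the case-by-case sieve through the factorization tables. For each candidate we must check whether $Q_{\overline{\a}}$ can act primitively of prime degree $p$ with $v_p=1$. The first thing we would try is the stabilizer-structure constraint: either $Q_{\overline{\a}}\cong(\ZZ_p\rtimes\ZZ_m)\times\ZZ_n$ in the solvable case, or $Q_{\overline{\a}}$ has a specific nonsolvable shape such as $\A_p$ or $\PSL_n(q)$ acting on $p$ points. Comparing these shapes with the $p$-part of $|Q:T|$ should prune most rows.
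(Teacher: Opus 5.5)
There is a genuine gap. The substance of this lemma is the determination of the pairs in Tables \ref{tab:simple-nonclassical} and \ref{tab:simple-classical-nonsolv}, and your Step 3 only announces that running through the factorization tables produces them. Nothing in the proposal actually excludes any row, or shows that the survivors are exactly those listed. The pruning device you propose (stabilizer shape plus the $p$-part of $|Q:T|$) does not suffice on its own.

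The paper splits by solvability of $L_{\overline{\a}}$. For solvable stabilizers it works through \cite[Propositions 4.2--4.4 and Theorem 1.1]{LX2022}, which treat factorizations with a solvable factor. Several residual cases there need genuinely different arguments:
\begin{itemize}
\item $(\A_p,\A_{p-2})$ is excluded by $|\N_{\A_p}(\ZZ_p)|=p(p-1)/2$.
\item $(\A_6,\A_5)$ is excluded by the cubic census together with the Magma check that $\A_6\neq\A_5\Sy_3$.
\item $(\M_{12},\M_{11})$ with $p=3$ is excluded by the census and normality of cubic Cayley graphs of $\M_{11}$ \cite{XFWX2005}.
\item $(\M_{24},\M_{23})$ with $L_{\overline{\a}}\cong\Sy_4\times\ZZ_2$ is excluded by a locally $(\M_{24},9)$-arc-transitive subdivision, contradicting \cite{L2009}.
\item $(\M_{23},\M_{22})$ with $L_{\overline{\a}}\cong\ZZ_{23}\rtimes\ZZ_{11}$ is excluded by $|\N_{\M_{23}}(\ZZ_{11})|=55$ against an edge stabilizer of order $22$.
\end{itemize}
For nonsolvable stabilizers the paper uses \cite[Lemma 4.1, Theorems 6.1, 6.2, 8.1--8.7]{LLarXiv2021}. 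Your appeal to \cite{LWX2023} does not fit here. As used in this paper, that result concerns exact factorizations with $G\cap X_\a=1$, i.e.\ a regular factor. In the present lemma $T\cap L_{\overline{\a}}$ need not be trivial, as row 1 of Table \ref{tab:simple-nonclassical} shows.

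Step 1 also has holes. For product action, the inequality $|\Delta|^r>|T|$ is false as a purely group-theoretic claim. For instance, the diagonal $\{(t,t^\sigma)\}\leqslant\A_6^2$, with $\sigma$ an outer automorphism, is transitive on the $36$ points of $\Delta^2$, with stabilizer $\D_{10}$. What rules such configurations out is graph-theoretic input. First dispose of the simple-diagonal-based case via $v_p(|L_{\overline{\a}}|)\geqslant2$. Then, as in the proof of Lemma \ref{lem:product-action}, local primitivity, Lemma \ref{NQ} and the absence of index-$2$ subgroups force $L_{\overline{\a}}$ to be a full diagonal subgroup of $R_1\times\cdots\times R_r$ with $R_i<Q_i$. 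Hence $|V(\Ga_N)|=|Q|^r/|R_1|>|Q|^{r-1}\geqslant|T|$, contradicting the transitivity of $T$.

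Likewise, in the affine case Guralnick's prime-power-index theorem alone does not give completeness. You must still show that $\PSL_2(7)<\AGL_3(2)$ is the only transitive simple subgroup, for example via minimal cross-characteristic degrees. You must also check that $\overline{X}=\AGL_3(2)$ is then $2$-transitive, which gives $\Ga_N\cong\K_8$. The paper obtains all of Step 1 at once from \cite[Theorem 1.2]{L2025}.
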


\pf\, By Lemma \ref{lem:quotient-reduction}, $\overline{X}$ is quasiprimitive on $V(\Ga_N)$, $\overline{G}=T$ is a transitive nonabelian simple subgroup of $\overline{X}$, and $\Ga_N$ is a connected $p$-valent $\overline{X}$-arc-transitive graph. Hence the hypotheses of \cite[Theorem 1.2]{L2025} are satisfied. It follows that either $\overline{X}\cong\AGL_3(2)$ and $\Ga_N\cong\K_8$, or $\overline{X}$ is of almost simple type.
The first possibility is contained in (i). Hence we may assume that $\overline{X}$ is of almost simple type and let $L=\soc(\overline{X})$. By Lemma \ref{subgroup}, we have $T\leqslant L$. If $T=L$, then $\overline{G}=T\unlhd\overline{X}$, and (ii) holds. Thus assume that $T<L$. If $\Ga_N$ is complete, then (i) holds. For the remainder of the proof, we assume that $\Ga_N$ is not complete.

Since $T$ is transitive on $V(\Ga_N)$, we have $L=TL_{\overline{\a}}$, and so $L_{\overline{\a}}\neq1$ as $T<L$. Since $L\unlhd\overline{X}$, we have $L_{\overline{\a}}\unlhd(\overline{X})_{\overline{\a}}$. Hence $L_{\overline{\a}}^{\Ga_N(\overline{\a})}$ is normal in the primitive permutation group $(\overline{X})_{\overline{\a}}^{\Ga_N(\overline{\a})}$.
We claim that $L_{\overline{\a}}^{\Ga_N(\overline{\a})}\neq1$. Otherwise, $L_{\overline{\a}}$ fixes every vertex in $\Ga_N(\overline{\a})$ and so $L_{\overline{\a}}\leqslant L_{\overline{\b}}$ for some $\overline{\b}\in\Ga_N(\overline{\a})$. Since $L$ is vertex-transitive, $|L_{\overline{\a}}|=|L_{\overline{\b}}|$, and so $L_{\overline{\a}}=L_{\overline{\b}}$. By the connectivity of $\Ga_N$, all vertex stabilizers in $L$ are equal. Thus $L_{\overline{\a}}$ lies in the kernel of the action of $L$ on $V(\Ga_N)$. Since $L\leqslant\overline{X}$ and $\overline{X}$ acts faithfully on $V(\Ga_N)$, the action of $L$ is faithful, and so this kernel is trivial. Thus $L_{\overline{\a}}=1$, a contradiction. Then $L_{\overline{\a}}^{\Ga_N(\overline{\a})}$ is transitive on $\Ga_N(\overline{\a})$, and so $\Ga_N$ is $L$-arc-transitive.

Since $T$ and $L$ are nonabelian simple and $T<L$, the subgroup $T$ is core-free in $L$. We now distinguish two cases.

\textbf{Claim 1.} If $L_{\overline{\a}}$ is solvable, then $(L,T,L_{\overline{\a}})$ occurs in rows 1--4 of Table \ref{tab:simple-nonclassical}.

Suppose that $L_{\overline{\a}}$ is solvable. Applying the \cite[Section 2]{DM1980} and \cite[Theorem 2.1]{GHX2015}, we conclude that either $p\geqslant5$ and $L_{\overline{\a}}\leqslant\AGL_1(p)\times\ZZ_{p-1}$, or $p=3$ and $L_{\overline{\a}}\in\{\ZZ_3,\,\Sy_3,\,\Sy_3\times\ZZ_2,\,\Sy_4,\,\Sy_4\times\ZZ_2\}$. By \cite[Proposition 4.2]{LX2022}, $L$ is not an exceptional simple group of Lie type. 

(I). Assume that $L=\A_n$ acting naturally on set $\Omega:=\{1, 2, \ldots, n\}$. Now, we apply \cite[Proposition 4.3]{LX2022} to the factorization $L=TL_{\overline{\a}}$. First, we suppose that \cite[Proposition 4.3 (a)]{LX2022} holds, that is $L_{\overline{\a}}$ is transitive on $\Omega$ and $\A_{n-1}\leqslant T\leqslant\Sy_{n-1}$. Noting that $T$ is nonabelian simple, we have $T=\A_{n-1}$ and $n\geqslant 6$. This gives row 1 of Table \ref{tab:simple-nonclassical}.

\begin{table}[htbp]
\centering
\renewcommand{\arraystretch}{1.3}
\setlength{\tabcolsep}{7pt}
\caption{The non-complete nonclassical cases.}
\label{tab:simple-nonclassical}
{\scriptsize
\begin{tabular}{|c|c|c|c|p{6cm}|}
\hline
row & $L$ & $T$ & $L_{\overline{\a}}$ & remark\\
\hline
1 & $\A_n$ & $\A_{n-1}$ & transitive on $\Omega$ & $|\Omega|=n\geqslant6$\\
\hline
2 & $\M_{23}$ & $\M_{22}$ & $\ZZ_{23}$ & $p=23$, $\Ga_N$ is a Cayley graph of $\M_{22}$\\
\hline
3 & $\PSL_2(p)$ & $\A_5$ & $\leqslant\ZZ_p\rtimes\ZZ_{(p-1)/2}$ & $p\in\{11,19,29\}$\\
\hline
4 & $\M_{24}$ & $\M_{23}$ & $\PSL_3(2)$ & $p=7$, $\Ga_N$ is unique up to isomorphism\\
\hline
\end{tabular}}
\end{table}

Suppose next that \cite[Proposition 4.3(b)]{LX2022} holds, that is $n=r^f$ for some prime $r$, the group $L_{\overline{\a}}$ is $2$-homogeneous on $\Omega$, and $\A_{n-2}\leqslant T\leqslant \Sy_{n-2}\times \Sy_2$. Since $T$ is nonabelian simple, we have $T=\A_{n-2}$ and $n\geqslant7$. Hence $|L:T|=n(n-1)$ divides $|L_{\overline{\a}}|$.
If $p=3$, then $n(n-1)$ divides $|L_{\overline{\a}}|$ which is less than or equal to 48. Due to $n$ being greater than or equal to 7 and $|L_{\overline{\a}}|\neq 42$, a contradiction arises. Hence $p\geqslant5$ and $L_{\overline{\a}}\leqslant \AGL_1(p)\times \ZZ_{p-1}$. Note that $p\mid |L_{\overline{\a}}|$. The group $\AGL_1(p)\times \ZZ_{p-1}$ has a unique Sylow $p$-subgroup $P\cong\ZZ_p$. Hence $P\leqslant L_{\overline{\a}}$ and $P\unlhd L_{\overline{\a}}$. Since $L_{\overline{\a}}$ is $2$-homogeneous on $\Omega$, it is primitive on $\Omega$, and so every nontrivial normal subgroup of $L_{\overline{\a}}$ is transitive on $\Omega$. Thus $P$ is transitive on $\Omega$, and so $n\mid p$. Together with $p\leqslant n$, this gives $n=p$. Hence $L\cong\A_p$ and $T\cong\A_{p-2}$. Since $P\unlhd L_{\overline{\a}}\leqslant L$, we have $L_{\overline{\a}}\leqslant \N_L(P)$. 
The group $\A_p$ contains $(p-1)!$ $p$-cycles, and each Sylow $p$-subgroup contains exactly $p-1$ nonidentity elements. Hence $\A_p$ has $(p-2)!$ Sylow $p$-subgroups. Since the number of Sylow $p$-subgroups is $|\A_p:\N_{\A_p}(P)|$, we obtain $|\N_{\A_p}(P)|=p(p-1)/2$. On the other hand, $p(p-1)=|L:T|$ divides $|L_{\overline{\a}}|$, contradicting $L_{\overline{\a}}\leqslant \N_L(P)$. Hence this case does not occur.

Now consider the case (c) of \cite[Proposition 4.3]{LX2022}, that is, $(L_{\overline{\a}},n)=(\AGL_1(8),8)$, $(\AGammaL_1(8),8)$ or $(\AGammaL_1(32),32)$. Recall that $\Ga_N$ is $p$-valent $L$-arc-transitive. By \cite[Theorem 2.1]{GHX2015}, we get that $p$ is the largest prime divisor of $|L_{\overline{\a}}|$ and $v_p(|L_{\overline{\a}}|)=1$. For $(L_{\overline{\a}},n)=(\AGL_1(8),8)$ or $(\AGammaL_1(8),8)$, we have $|L_{\overline{\a}}|\in\{56,168\}$ and so $p=7$. Due to $L_{\overline{\a}}\leqslant\AGL_1(p)\times\ZZ_{p-1}$, one yields that $|L_{\overline{\a}}|\mid 7\cdot 6^2$, a contradiction arisen as $v_2(|L_{\overline{\a}}|)=3>2=v_2(7\cdot 6^2)$. If $(L_{\overline{\a}},n)=(\AGammaL_1(32),32)$, then $p=31$ and $|L_{\overline{\a}}|\mid 31\cdot30^2$. This is impossible because $v_2(|L_{\overline{\a}}|)=5> 2=v_2(31\cdot30^2)$. Thus case (c) does not occur.

Suppose that (d) and (e) of \cite[Proposition 4.3]{LX2022} hold, that is $(L,T)=(\A_6,\A_5)$ and either $L_{\overline{\a}}\leqslant \Sy_4\times \Sy_2$ or $L_{\overline{\a}}\leqslant \Sy_3\wr\Sy_2$. Recall that $\Ga_N$ is $p$-valent $L$-arc-transitive. By Lemma \ref{power}, we have $v_p(|L_{\overline{\a}}|)=1$. Thus $p=3$. 
Since $T=\A_5$ is transitive on $V(\Ga_N)$, we have $|V(\Ga_N)|$ divides 60, and so \cite{CP2026} gives $\Ga_N\cong\F30A$ or $\F60A$, where $\F30A$ is the bipartite. Since $L\cong\A_6$ is perfect and vertex-transitive on $\Ga_N$, the graph $\Ga_N$ is nonbipartite. Thus $\Ga_N\ncong\F30A$, and so $\Ga_N\cong\F60A$. Hence $L_{\overline{\a}}\cong\Sy_3$ and $T$ is regular on $V(\Ga_N)$. Hence $L\cong\A_6$ would admit a factorization $\A_6=\A_5\Sy_3$. The following Magma computation (Listing \ref{lst:A6-factorization}) shows that no such pair exists. Thus cases (d) and (e) do not occur.

Finally, suppose that \cite[Proposition 4.3(f)]{LX2022} holds, that is, $(L,L_{\overline{\a}},T)$ is one of the triples listed in \cite[Table 4.2]{LX2022}. However, in every row of that table, either $L$ or $T$ is not nonabelian simple. Hence case (f) does not occur.

(II). Suppose now that $L$ is a sporadic simple group. Applying the \cite[Proposition 4.4]{LX2022}, we conclude that one of the following holds.
\begin{enumerate}[leftmargin=1.2em,label=(\arabic*)]
\item $L=\M_{12}$, $T=\M_{11}$, and $L_{\overline{\a}}$ is transitive on $[L:T]$.
\item $L=\M_{24}$, $T=\M_{23}$, and $L_{\overline{\a}}$ is transitive on $[L:T]$.
\item $(L,T,L_{\overline{\a}})$ occurs in \cite[Table 4.3]{LX2022}.
\end{enumerate}

Assume first that (1) holds. Since $\Ga_N$ is $L$-arc-transitive of valency $p$, we have $p\mid |L_{\overline{\a}}|$. As $L_{\overline{\a}}\leqslant L=\M_{12}$ and $p$ is odd, it follows from $|\M_{12}|=2^6\cdot3^3\cdot5\cdot11$ that $p\in\{3,5,11\}$. Since $L=TL_{\overline{\a}}$, we have $12=|L:T|$ divides $|L_{\overline{\a}}|$. Note that $L_{\overline{\a}}\leqslant\AGL_1(p)\times\ZZ_{p-1}$. If $p=5$ or $11$, then $|L_{\overline{\a}}|\mid p(p-1)^2$, but $12\nmid p(p-1)^2$, a contradiction. Hence $p=3$. The transitivity of $T\cong\M_{11}$ on $V(\Ga_N)$ gives $|V(\Ga_N)|\mid|\M_{11}|=7920$. By \cite{CP2026}, the only connected cubic symmetric graph of order dividing $7920$ admitting $\M_{12}$ as an arc-transitive group is $\F7920C$. Thus $\Ga_N\cong\F7920C$ and $|V(\Ga_N)|=7920$. The equality $|V(\Ga_N)|=|T|$, together with the transitivity of $T$ on $V(\Ga_N)$, shows that $T$ is regular on $V(\Ga_N)$. Hence $\Ga_N$ is a connected cubic symmetric Cayley graph of $T\cong\M_{11}$. By \cite[Theorem 1.1]{XFWX2005}, such a graph is normal unless $T\cong\A_{47}$. Thus $T\unlhd\Aut(\Ga_N)$, and in particular $T\unlhd L$. This contradicts the simplicity of $L\cong\M_{12}$ and the proper inclusion $1<T<L$. Hence case (1) does not occur.

Suppose that (2) holds. Since $\Ga_N$ is $L$-arc-transitive of valency $p$, we have $p\mid |L_{\overline{\a}}|$. As $L_{\overline{\a}}\leqslant L=\M_{24}$ and $p$ is odd, it follows from $|\M_{24}|=2^{10}\cdot3^3\cdot5\cdot7\cdot11\cdot23$ that $p\in\{3,5,7,11,23\}$. Moreover, since $L_{\overline{\a}}$ is transitive on $[L:T]$ and $|L:T|=24$, we have $24\mid |L_{\overline{\a}}|$.
If $p\neq3$, then $|L_{\overline{\a}}|\mid p(p-1)^2$, but $24\nmid p(p-1)^2$, a contradiction. Hence $p=3$. Then $L_{\overline{\a}}\cong\Sy_4$ or $\Sy_4\times\ZZ_2$. Assume that $L_{\overline{\a}}\cong\Sy_4$. Then $|T\cap L_{\overline{\a}}|=|T||L_{\overline{\a}}|/|L|=1$ as $T\cong\M_{23}$ and $L\cong\M_{24}$. Hence $T$ is regular on $V(\Ga_N)$, and $\Ga_N$ is a connected cubic Cayley graph of $T$, say $\Ga_N=\Cay(T,U)$ for some $1\not\in U\subseteq T$. By \cite[Theorem 1.1]{XFWX2005}, the graph $\Ga_N$ is a normal Cayley graph as $T\not\cong\A_{47}$ and $\Aut(\Ga_N)=T\rtimes\Aut(T,U)$. It follows that $\Aut(T,U)\leqslant\Sy_3$, a contradiction arisen as $L_{\overline{\a}}\cong\Sy_4\leqslant\Aut(T,U)$. 
Suppose that $L_{\overline{\a}}\cong\Sy_4\times\ZZ_2$. Then $|L_{\overline{\a}}|=48$ and $|T\cap L_{\overline{\a}}|=|T||L_{\overline{\a}}|/|L|=|\M_{23}|48/|\M_{24}|=2$. It follows from \cite[Section 2]{DM1980} that $\Ga_N$ is $(L,5)$-arc-regular. Let $\Sigma$ be obtained from $\Ga_N$ by placing a new vertex at the midpoint of each edge of $\Ga_N$. Then $\Sigma$ is bipartite, with one part consisting of the vertices of valency 3 and the other part consisting of the vertices of valency 2. Then $\Sigma$ is bipartite with valencies 3 and 2 on its two parts. By \cite[p. 884, Section 2.1]{L2009}, the graph $\Sigma$ is locally $(\M_{24},9)$-arc-transitive. This is impossible by \cite[Table 3]{L2009}, which contains no locally $(\M_{24},9)$-arc-transitive graph. Hence (2) does not occur.

Finally, suppose that (3) holds, that is $(L,T,L_{\overline{\a}})$ occurs in \cite[Table 4.3]{LX2022}. In rows 1, 4, 5, 8 and 10--13 of \cite[Table 4.3]{LX2022}, either $L$ or $T$ is not nonabelian simple, so these rows do not occur. In rows 2 and 3, the possible orders of $L_{\overline{\a}}$ are 72, 144 and 432, and for row 9, we have $|L_{\overline{\a}}|=2000$. Hence no odd prime divisor $p$ of $|L_{\overline{\a}}|$ satisfies $v_p(|L_{\overline{\a}}|)=1$, contrary to Lemma \ref{power}. Thus only rows 6 and 7 remain. In row 7, the simplicity of $T$ excludes $\PSL_3(4):\ZZ_2$ and $\ZZ_2^4:\A_7$, and so in both rows we have $L\cong\M_{23}$ and $T\cong\M_{22}$.
In row 6, $L_{\overline{\a}}\cong\ZZ_{23}$. Since $p\mid|L_{\overline{\a}}|$, we have $p=23$. Moreover, since $23=|L:T|=|L_{\overline{\a}}:T\cap L_{\overline{\a}}|$, it follows that $T\cap L_{\overline{\a}}=1$. Thus $T$ is regular on $V(\Ga_N)$, and $\Ga_N$ is a $23$-valent Cayley graph of $\M_{22}$, as in row 2 of Table \ref{tab:simple-nonclassical}.
Now consider row 7, where $L_{\overline{\a}}\cong\ZZ_{23}\rtimes\ZZ_{11}$. By \cite[Theorem 2.1]{GHX2015}, the valency $p$ is the largest prime divisor of $|L_{\overline{\a}}|$. Then $p=23$. Let $\{\overline{\a},\overline{\b}\}$ be an edge of $\Ga_N$. Since $L$ is arc-transitive, $|L_{\overline{\a}}:L_{\overline{\a}\overline{\b}}|=p$ and so $|L_{\overline{\a}\overline{\b}}|=11$. Since $\Ga_N$ is undirected and $L$-arc-transitive, $L_{\overline{\a}\overline{\b}}$ has index $2$ in $L_{\{\overline{\a},\overline{\b}\}}$. Thus $|L_{\{\overline{\a},\overline{\b}\}}|=22$ and $L_{\{\overline{\a},\overline{\b}\}}\leqslant \N_L(L_{\overline{\a}\overline{\b}})$. 
However, for $L\cong\M_{23}$, \cite{Atlas} gives $\N_L(\ZZ_{11})\cong \ZZ_{11}\rtimes\ZZ_5$, of order 55. This is impossible because $L_{\{\overline{\a},\overline{\b}\}}\leqslant \N_L(L_{\overline{\a}\overline{\b}})$ has order $22$, but $22\nmid55$. Hence row 7 does not occur.

(III) Now suppose that $L$ is a classical simple group of Lie type. Then by \cite[Theorem 1.1]{LX2022}, $(L,T,L_{\overline{\a}})$ occurs in \cite[Tables 1.1 and 1.2]{LX2022}. Since $L$ and $T$ are nonabelian simple and $L=TL_{\overline{\a}}$, row 1 of \cite[Table 1.1]{LX2022} and rows 7--10, 17 and 21 of \cite[Table 1.2]{LX2022} are excluded because $T$ is not simple. 
A direct order calculation shows that $|L:T|\nmid|L_{\overline{\a}}|$ in rows 2 and 6 of \cite[Table 1.2]{LX2022}, 
so these rows are excluded. By \cite[Theorem 2.1]{GHX2015}, we get that $p$ is the largest prime divisor of $|L_{\overline{\a}}|$ and $v_p(|L_{\overline{\a}}|)=1$. 
Inspection of the orders in rows 11--16 and 18--20 of \cite[Table 1.2]{LX2022} shows that no prime divisor $p$ satisfies both $p=\max\Pi(|L_{\overline{\a}}|)$ and $v_p(|L_{\overline{\a}}|)=1$, except for $(L,T,L_{\overline{\a}})=(\PSU_4(3),\PSL_3(4),\ZZ_3^4\rtimes\D_{10})$. This is a direct prime-factorization check of the displayed stabilizer orders. 
This exceptional case and rows 22--28 are excluded as $|L_{\overline{\a}}|\notin\{3,6,12,24,48\}$ for $p=3$, and $|L_{\overline{\a}}|\nmid p(p-1)^2$ for $p\geqslant5$. The remaining finite case with $(L,T)=(\PSL_2(p),\A_5)$ gives row 3 of Table \ref{tab:simple-nonclassical} for $p\in\{11,19,29\}$. The case $(L,T)=(\PSL_2(59),\A_5)$ gives $\Ga_N\cong\K_{60}$ and is excluded by our standing assumption that $\Ga_N$ is not complete. 

Assume that one of rows 2--9 of \cite[Table 1.1]{LX2022} occurs. Since $T$ is simple, the nontrivial normal subgroup displayed in the fourth column of \cite[Table 1.1]{LX2022} is equal to $T$. A direct calculation gives $|L:T|>48$ in rows 2--9 of \cite[Table 1.1]{LX2022} whenever both $L$ and $T$ are nonabelian simple. If $p=3$, then the solvable-stabilizer classification gives $|L_{\overline{\a}}|\leqslant48$, whereas $|L:T|\mid|L_{\overline{\a}}|$. This is impossible. Hence $p\geqslant5$ and $L_{\overline{\a}}\cong (\ZZ_p\rtimes \ZZ_a)\times \ZZ_b$, where $b\mid a\mid p-1$ (see \cite[Theorem 2.1]{GHX2015}). 
In particular, $v_p(|L_{\overline{\a}}|)=1$ and $|L_{\overline{\a}}|\mid p(p-1)^2$. Let $q=t^f$. If $t=p$, then $v_p(|L:T|)\geqslant2$ in each of rows 2--9, contrary to $v_p(|L_{\overline{\a}}|)=1$ and $|L:T|\mid|L_{\overline{\a}}|$. Thus $t\neq p$. Since $L_{\overline{\a}}\cong(\ZZ_p\rtimes\ZZ_a)\times\ZZ_b$ and $\ZZ_a$ acts faithfully on $\ZZ_p$, the first factor has no nontrivial normal $t$-subgroup. Hence $\O_t(L_{\overline{\a}})\leqslant\ZZ_b$, and so $\O_t(L_{\overline{\a}})$ is cyclic.

Suppose that row 2 of \cite[Table 1.1]{LX2022} occurs, that is, $L\cong\PSL_4(q)$, $T\cong\PSp_4(q)$ and $L_{\overline{\a}}\leqslant \E_q^3:\ZZ_{(q^3-1)/d}.\ZZ_3$, where $d=(4,q-1)$. The inclusion $L_{\overline{\a}}\cap \E_q^3\leqslant\O_t(L_{\overline{\a}})$, together with the fact that $\E_q^3$ is elementary abelian and $\O_t(L_{\overline{\a}})$ is cyclic, gives $|L_{\overline{\a}}\cap \E_q^3|\leqslant t$. Also, $L_{\overline{\a}}/(L_{\overline{\a}}\cap \E_q^3)$ is isomorphic to a subgroup of $\ZZ_{(q^3-1)/d}.\ZZ_3$. The relation $t\nmid q^3-1$ then gives $v_t(|L_{\overline{\a}}|)\leqslant1+v_t(3)$. Now $v_t(|L:T|)=2f$, and the divisibility $|L:T|\mid|L_{\overline{\a}}|$ gives $2f\leqslant1+v_t(3)$. Thus $t=3$ and $f=1$, so $q=3$. However, $|\PSL_4(3):\PSp_4(3)|=234\nmid1053$, so row 2 does not occur. 
Applying the same $t$-part argument to the stabilizer structures in rows 3, 5, 6, 7 and 8 of \cite[Table 1.1]{LX2022} excludes these rows, while row 9 reduces to $q=2$.
For the remaining case in row 9, $L\cong\POmega_8^+(2)$, $T\cong\Omega_7(2)$ and $L_{\overline{\a}}\leqslant\E_2^6:\ZZ_{15}.\ZZ_4$. Thus $|L_{\overline{\a}}|\mid3840$. Since $p\geqslant5$ and $p\mid|L_{\overline{\a}}|$, we obtain $p=5$. By \cite[Theorem 2.1]{GHX2015}, $|L_{\overline{\a}}|\mid5\cdot4^2=80$, whereas $|L:T|=120\mid|L_{\overline{\a}}|$, which is impossible.

Assume next that row 4 of \cite[Table 1.1]{LX2022} holds, that is, $L\cong\PSp_4(q)$, $T\cong\Sp_2(q^2)$ and $L_{\overline{\a}}\leqslant \E_q^3:\ZZ_{q^2-1}.\ZZ_2$, where $q=2^f$. The subgroup $L_{\overline{\a}}\cap \E_q^3$ is contained in the cyclic group $\O_2(L_{\overline{\a}})$. 
Since $\E_q^3$ is elementary abelian for $q=2^f$, this intersection has order at most 2. Moreover, $L_{\overline{\a}}/(L_{\overline{\a}}\cap \E_q^3)$ is isomorphic to a subgroup of $\ZZ_{q^2-1}.\ZZ_2$. Since $q^2-1$ is odd, $v_2(|L_{\overline{\a}}|)\leqslant2$. Now $v_2(|L:T|)=2f$, and $|L:T|\mid|L_{\overline{\a}}|$ gives $2f\leqslant2$. Hence $q=2$, which is impossible because $\PSp_4(2)$ is not simple.

Thus none of the rows in \cite[Table 1.1]{LX2022} occurs and so Claim 1 follows.

\textbf{Claim 2.} If $L_{\overline{\a}}$ is nonsolvable, then $(L,T)$ is one of the pairs listed in rows 1 and 4 of Table \ref{tab:simple-nonclassical} or in Table \ref{tab:simple-classical-nonsolv}.

Suppose first that $L$ or $T$ is alternating. By \cite[Theorem 6.1]{LLarXiv2021}, the cases for which $\Ga_N\cong\K_{p+1}$ are excluded by the assumption that $\Ga_N$ is not complete and the remaining possibilities are $(L,T,L_{\overline{\a}})=(\A_{12},\A_{11},\PSL_2(11))$ or $L=\A_n$, $T=\A_{n-1}$ with $L_{\overline{\a}}$ being a transitive but not 2-transitive subgroup of $L$. These are included in row 1 of Table \ref{tab:simple-nonclassical}. 
Suppose next that $L$ is a sporadic simple group. By \cite[Theorem 6.2]{LLarXiv2021}, the cases with $L_{\overline{\a}}\cong\M_{11}$ or $\M_{23}$ give $\K_{12}$ or $\K_{24}$ and are excluded. The only remaining possibility is $(L,T,L_{\overline{\a}})=(\M_{24},\M_{23},\PSL_3(2))$ with $p=7$, giving row 4 of Table \ref{tab:simple-nonclassical}. 
By \cite[Lemma 4.1]{LLarXiv2021}, $L$ is not an exceptional simple group of Lie type. 
Finally, if $L$ is a classical simple group, then \cite[Theorems 8.1--8.7]{LLarXiv2021} gives the cases in Table \ref{tab:simple-classical-nonsolv}. This proves Claim 2.

Combining Claims 1 and 2, we obtain Lemma \ref{lem:simple-quotient} (iii), and so the lemma follows. \qed

\begin{table}[htbp]
\centering
\setlength{\tabcolsep}{6pt}
\renewcommand{\arraystretch}{1.3}
\caption{The non-complete classical cases with nonsolvable $L_{\overline{\a}}$.}
\label{tab:simple-classical-nonsolv}
{\scriptsize
\begin{tabular}{|c|c|c|c|}
\hline
$L$ & $T$ & $K\unlhd L_{\overline{\a}}^{\Ga_N(\overline{\a})}$ & remark \\
\hline
$\PSU_6(2)$ & $\PSU_5(2)$ & $\SL_3(2)$ & $p=7$ \\
\hline
$\PSU_{2m}(q)$ & $\PSU_{2m-1}(q)$ & $\SL_2(q^m)$ & $p=q^m+1$, $q=2^f$, $mf$ a power of 2 \\
\hline
$\Omega_{2m+1}(q)$ & $\POmega^-_{2m}(q)$ & $\PSL_d(q^{m/d})$ & $p=\dfrac{q^m-1}{q^{m/d}-1}$, $d$ odd prime, $mf$ a power of $d$ \\
\hline
$\POmega^-_{10}(q)$ & $\PSU_5(q)$ & $\SL_2(q^4)$ & $p=q^4+1$, $q>2$, $f$ a power of 2 \\
\hline
$\POmega^+_8(q)$ & $\PSp_6(q)$ & $\SL_2(q^2)$ & $p=q^2+1$, $q=2^f$, $f$ a power of 2 \\
\hline
$\POmega^+_8(3)$ & $\Omega^+_8(2)$ & $\PSL_3(3)$ & $p=13$, $L_{\overline{\a}}\cong\ZZ_3^6\rtimes\PSL_3(3)$ \\
\hline
$\POmega^+_{12}(2)$ & $\Omega^-_{10}(2),\PSp_{10}(2)$ & & $\SL_5(2)\leqslant L_{\overline{\a}}\leqslant\ZZ_2^{20}\rtimes\SL_5(2)$ \\
\hline
$\POmega^+_{12}(2)$ & $\PSp_{10}(2)$ & $\SL_3(2)$ & \\
\hline
$\POmega^+_{2m}(q)$ & $\PSp_{2m-2}(q)$ & $\SL_2(q^{m/2})$ & $m\geqslant8$, $r=2$, $mf$ a power of $2$ \\
\hline
$\POmega^+_{2m}(q)$ & $\PSp_{2m-2}(q),\Omega^-_{2m-2}(q)$ & $\PSL_d(q^{m/d})$ & $r=2$, $d$ odd prime, $mf$ a power of $d$ \\
\hline
$\POmega^+_{2m}(q)$ & $\Omega_{2m-1}(q)$ & $\PSL_d(q^{m/d})$ & $r,d$ odd, $mf$ a power of $d$ \\
\hline
$\POmega^+_{2m}(q)$ & $\POmega^-_{2m-2}(q)$ & $\PSL_m(q)$ & $m$ odd prime, $f$ a power of $m$ \\
\hline
$\PSp_4(q)$ & $\PSL_2(q^2)$ & $\PSL_2(q^2),\SL_2(q)$ & $q=2^f$, $f>2$ a power of 2 \\
\hline
$\PSp_6(q)$ & $G_2(q),\PSL_2(q^3)$ & $\SL_2(q^2)$ & $q>2$, $f$ a power of 2 \\
\hline
$\PSp_{12}(2)$ & $\Omega^-_{12}(2)$ & $\PSL_3(2),\PSL_5(2)$ & \\
\hline
$\PSp_{2m}(q)$ & $\Omega^-_{2m}(q)$ & $\PSL_d(q^{m/d})$ & $m\geqslant5$, $d$ odd prime, $mf$ a power of $d$ \\
\hline
$\PSp_{2m}(q)$ & $\Omega^-_{2m}(q)$ & $\PSL_2(q^{m/2})$ & $m\geqslant4$, $mf$ a power of 2 \\
\hline
$\PSp_{2m}(q)$ & $\Omega^+_{2m}(q)$ & $\PSL_2(q^m)$ & $m\geqslant4$, $mf$ a power of 2 \\
\hline
$\PSL_n(q)$ & $\PSL_{n-1}(q)$ & $\PSL_d(q^{n/d})$ & $d$ odd prime, $nf$ a power of $d$ \\
\hline
$\PSL_4(4)$ & $\PSp_4(4)$ & $\PSL_3(2)$ & \\
\hline
$\PSL_n(q)$ & $\PSL_m(q^{n/m})$ & $\PSL_{n-1}(q)$ & $m$ a proper divisor of $n$ \\
\hline
$\PSL_n(q)$ & $\PSp_m(q^{n/m})$ & $\PSL_{n-1}(q)$ & $m\geqslant4$ even \\
\hline
$\PSL_n(q)$ & $\POmega^-_m(q^{n/m})$ & $\PSL_{n-1}(q)$ & $4\mid m$, $m\geqslant8$ \\
\hline
$\PSL_n(q)$ & $\G_2(q^{n/6})$ & $\PSL_{n-1}(q)$ & $6\mid n$, $q$ even \\
\hline
$\PSL_6(2)$ & $\PSU_3(3)$ & $\PSL_5(2)$ & \\
\hline
\end{tabular}}
\end{table}

In the above lemma, we described the case of nonabelian simple group, and now we deal with the case of multiple nonabelian simple groups, i.e., $\ell\geqslant2$. Since $\overline{X}$ is quasiprimitive on $V(\Ga_N)$, it is of affine, almost simple, simple diagonal, product action, or twisted wreath action type. We treat these five types separately. In the rest of this Section, we always let $\overline{G}\cong T^\ell$ for some $\ell\geqslant 2$ and $L=\soc(\overline{X})=Q_1\times\cdots\times Q_r\cong Q^r$ for some $r\geqslant 1$.

\begin{lemma}\label{lem:affine-large}
Suppose that $\overline{G}\cong T^\ell$ with $\ell\geqslant2$. Then $\overline{X}$ is not of affine type.
\end{lemma}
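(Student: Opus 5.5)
Suppose, for a contradiction, that $\overline{X}$ is of affine type. Then $L=\soc(\overline{X})\cong\ZZ_t^d$ for some prime $t$ and some $d\geqslant1$, and $L$ is regular on $V(\Ga_N)$. In the setting of this lemma $G$ is nonabelian: if $G$ were abelian, Lemma \ref{lem:quotient-reduction}(ii) would already put us in case (1) of Theorem \ref{thm:non-quasiprimitive}, which is handled separately. So $\overline{G}\cong T^\ell$ with $T$ nonabelian simple and $\ell\geqslant2$. By Lemma \ref{lem:quotient-reduction}(i), $\overline{G}$ is transitive on $V(\Ga_N)$ and $\Ga_N$ is $\overline{X}$-arc-transitive of valency $p$.

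Write $\overline{X}=L\rtimes \overline{X}_{\overline{\a}}$ with $\overline{X}_{\overline{\a}}\leqslant\GL_d(t)$. The plan is to locate $\overline{G}$ inside this structure. Since $\overline{G}$ has no nontrivial solvable normal subgroups, $\overline{G}\cap L=1$, and so $\overline{G}$ embeds in $\overline{X}/L\cong\overline{X}_{\overline{\a}}$. By Lemma \ref{NQ}(3), $\overline{X}_{\overline{\a}}\cong X_\a$ is a vertex stabilizer of a connected $p$-valent symmetric graph. Its structure is therefore known from \cite{DM1980,GF2012,GLH2016,GHX2015,LLL2018}: it is solvable, or it is an extension of a solvable group by a group with a \emph{single} nonabelian composition factor coming from $X_\a^{\Ga(\a)}$, a $2$-transitive group of prime degree $p$. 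In the nonsolvable case the kernel $X_\a^{[1]}$ and the point stabilizer of $X_\a^{\Ga(\a)}$ contribute only solvable sections. I would verify this single-factor property explicitly from the cited classification. Granting it, $X_\a$ has at most one nonabelian composition factor counted with multiplicity. But $\overline{G}\cong T^\ell$ with $\ell\geqslant2$ embeds into $X_\a$ and so contributes $\ell\geqslant2$ nonabelian composition factors. This is a contradiction.

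A cheaper alternative, which I would try first, is a direct $p$-adic count. Because $\overline{G}$ is transitive and $\Ga_N$ is $p$-valent with $p$ odd, the vertex count $t^d=|V(\Ga_N)|$ must be even, so $t=2$. Then $|\overline{X}|=2^d|\overline{X}_{\overline{\a}}|$ and $v_p(|\overline{X}|)=1$ by Lemma \ref{power}. If $p$ divides $|T|$, then $v_p(|\overline{G}|)\geqslant\ell\geqslant2$, which is impossible since $\overline{G}\leqslant\overline{X}$. Hence $p\nmid|T|$. This alone does not finish the argument, so the composition-factor argument of the previous paragraph is the main route.

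The main obstacle is justifying that a prime-valent vertex stabilizer has at most one nonabelian composition factor. From the cited results, $X_\a^{[1]}$ is solvable, since it embeds in a point stabilizer of $X_\a^{\Ga(\a)}$ modulo $p$-groups, as the known lists show. Also, the point stabilizer in a $2$-transitive group of prime degree, such as $\PSL_2(11)$ of degree $11$, $\M_{11}$, $\M_{23}$, $\PSL_n(q)$ or $\A_p$, has at most one nonabelian composition factor. If a stabilizer such as $\A_{p-1}$ contributes a second factor, I would instead compare it with $T^\ell$ directly. In the worst case, where $X_\a\cong\A_p\times\A_{p-1}$ in the $\K_{p+1}$-type situations, I would argue that $T^2$ embedding into it forces $T\cong\A_{p-1}$ inside $\A_p$. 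Then $\ell=2$ and $|\overline{X}:L|$ is divisible by $|T|^2$. Combining this with $2^d=|V(\Ga_N)|$ dividing $|\overline{G}|=|T|^2$ and with $\overline{X}_{\overline{\a}}\leqslant\GL_d(2)$ should yield a contradiction, or else the case $\Ga_N\cong\K_8$ with $\ell=1$ recorded in the theorem, which lies outside the present lemma.
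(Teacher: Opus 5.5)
\textbf{There is a genuine gap in your main route.} Your preliminary reductions are correct: $\overline{G}\cap L=1$, so $\overline{G}\cong\overline{G}L/L$ embeds in $\overline{X}/L\cong\overline{X}_{\overline{\a}}\cong X_\a$. Also $|V(\Ga_N)|=2^d$, and $p\nmid|T|$ because $v_p(|\overline{X}|)=v_p(|X_\a|)=1$ by Lemma~\ref{power}.

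The failing step is ``$T^\ell$ embeds into $X_\a$ and so contributes $\ell\geqslant2$ nonabelian composition factors''. This is invalid, because the composition factors of a subgroup need not be composition factors of the group. For example, $X_\a\cong\A_p$ occurs as a vertex stabilizer: take $\Ga=\K_{p+1}$ and $X=\A_{p+1}$. It has a single nonabelian composition factor, yet for $p\geqslant11$ it contains $\A_5\times\A_5$. Since $p\nmid|\A_5|$, your $p$-adic count excludes nothing here either.

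The single-factor property is also false as stated. For $p\geqslant7$ the odd graph on the $(p-1)$-subsets of a $(2p-1)$-set is $\Sy_{2p-1}$-arc-transitive of valency $p$, with $X_\a\cong\Sy_{p-1}\times\Sy_p$. Its kernel $X_\a^{[1]}\cong\Sy_{p-1}$ is nonsolvable, contrary to your assertion that $X_\a^{[1]}$ is solvable. Your fallback, that $T^2\leqslant\A_p\times\A_{p-1}$ forces $T\cong\A_{p-1}$, fails for the same reason: $\A_p$ alone contains many subgroups isomorphic to $T^2$. So no argument using only the abstract structure of the vertex stabilizer plus $p$-adic counting can succeed. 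You have to use how $\overline{G}$ sits inside the affine group.

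\textbf{The missing ingredient.} The paper uses \cite[Theorem 1.3]{DS2020} on transitive nonabelian characteristically simple subgroups of affine quasiprimitive groups. It forces $T\cong\SL_3(2)$, $|V(\Ga_N)|=8^\ell$, and $\overline{G}_{\overline{\a}}=K_1\times\cdots\times K_\ell$ with $|K_i|=21$. The paper then argues as follows.
\begin{enumerate}
\item It shows $K_i=\N_{T_i}(K_i)$, so $K_i$ has one fixed point and one orbit of length $7$ on the cosets of $K_i$ in $T_i$.
\item Consequently $H=\overline{G}_{\overline{\a}}$ fixes only $\overline{\a}$, and all its other orbits have length divisible by $7$. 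Since $\Ga_N(\overline{\a})$ is $H$-invariant of size $p$, this gives $p=7$.
\item It finishes with a connectivity argument using $M=T_1\times K_2\times\cdots\times K_\ell$. The orbit $\overline{\a}^M$ contains $\Ga_N(\overline{\a})$, hence is all of $V(\Ga_N)$, but it has size $8<8^\ell$.
\end{enumerate}
Your own $p$-adic remark would replace step 3: once $p=7$ is known, $7\mid|\SL_3(2)|$ gives $v_7(|\overline{G}|)=\ell\geqslant2>1=v_7(|\overline{X}|)$. So your observation is a genuine shortcut at the end, but it cannot substitute for identifying $T$ via \cite{DS2020} and the orbit analysis that pins down $p$.
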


\pf\, Assume that $\overline{X}$ is of affine type. The group $\overline{G}\cap\soc(\overline{X})$ is an abelian normal subgroup of $\overline{G}$, so Lemma \ref{obs} gives $\overline{G}\cap\soc(\overline{X})=1$. By \cite[Theorem 1.3]{DS2020} and the first two paragraphs of its proof, $\overline{G}=T_1\times\cdots\times T_\ell$ with $T_i\cong\SL_3(2)$, $|V(\Ga_N)|=8^\ell$ and $\overline{G}_{\overline{\a}}=K_1\times\cdots\times K_\ell$, where $|T_i:K_i|=8$. Thus $|K_i|=21$.

For a finite group $Y$, let $n_7(Y)$ denote the number of Sylow 7-subgroups of $Y$. For each $i\in\{1,\ldots,\ell\}$, let $P_i\in\Syl_7(K_i)$. Noting that $|K_i|=21$ yields $n_7(K_i)\equiv1\pmod7$ and $n_7(K_i)\mid3$. Hence $n_7(K_i)=1$, so $P_i\unlhd K_i$ and $K_i\leqslant\N_{T_i}(P_i)$. On the other hand, $n_7(T_i)\equiv1\pmod7$ and $n_7(T_i)\mid24$. Since $T_i\cong\SL_3(2)$ is simple, $n_7(T_i)\neq1$, and so $n_7(T_i)=8$. Thus $|\N_{T_i}(P_i)|=168/8=21$, giving $K_i=\N_{T_i}(P_i)$. Since $P_i$ is characteristic in $K_i$, we have $\N_{T_i}(K_i)\leqslant\N_{T_i}(P_i)=K_i$, and hence $\N_{T_i}(K_i)=K_i$.
Consider the action of $K_i$ by right multiplication on the right cosets of $K_i$ in $T_i$. For $y_i\in T_i$, the stabilizer of $K_iy_i$ in $K_i$ is $K_i\cap K_i^{y_i}$, so the orbit containing $K_iy_i$ has length $|K_i:K_i\cap K_i^{y_i}|$. This length is $1$ if and only if $K_i=K_i^{y_i}$, or equivalently, $y_i\in\N_{T_i}(K_i)=K_i$. Thus $K_i$ has exactly one fixed point. Every $K_i$-orbit length divides 21, and the remaining seven cosets contain no fixed point. Hence they form a single orbit of length 7.

Let $H=\overline{G}_{\overline{\a}}=K_1\times\cdots\times K_\ell$. For $\overline{\b}\in V(\Ga_N)$, choose $x=(x_1,\ldots,x_\ell)\in\overline{G}$ such that $\overline{\b}=\overline{\a}^x$. Then $\overline{G}_{\overline{\b}}=H^x$, and hence $H_{\overline{\b}}=H\cap H^x$. Since $H^x=K_1^{x_1}\times\cdots\times K_\ell^{x_\ell}$, we have $H\cap H^x=(K_1\cap K_1^{x_1})\times\cdots\times(K_\ell\cap K_\ell^{x_\ell})$. Note that $|\overline{\b}^H|=\prod_{j=1}^{\ell}|K_j:K_j\cap K_j^{x_j}|$. If $\overline{\b}\neq\overline{\a}$, then $x\notin H$, so $x_i\notin K_i$ for some $i$. The preceding paragraph gives $|K_i:K_i\cap K_i^{x_i}|=7$. Hence $7\mid|\overline{\b}^H|$ for every $\overline{\b}\neq\overline{\a}$.

The subgroup $H$ preserves $\Ga_N(\overline{\a})$. Moreover, the preceding paragraph shows that $\overline{\a}$ is the unique fixed point of $H$ on $V(\Ga_N)$. Since $\overline{\a}\notin\Ga_N(\overline{\a})$, every $H$-orbit contained in $\Ga_N(\overline{\a})$ has length divisible by 7. Hence $7\mid p$, and so $p=7$. Thus $\Ga_N(\overline{\a})$ is a single $H$-orbit of length 7.
Let $\overline{\b}=\overline{\a}^g\in\Ga_N(\overline{\a})$, where $g=(g_1,\ldots,g_\ell)\in\overline{G}$. Then $7=|\overline{\b}^H|=\prod_{j=1}^{\ell}|K_j:K_j\cap K_j^{g_j}|$. Each factor belongs to $\{1,7\}$, so, after relabeling if necessary, we may assume that $|K_1:K_1\cap K_1^{g_1}|=7$ and $|K_j:K_j\cap K_j^{g_j}|=1$ for every $j\geqslant2$. Thus $g_j\in\N_{T_j}(K_j)=K_j$ for every $j\geqslant2$. 
Let $M=T_1\times K_2\times\cdots\times K_\ell$. Then $H\leqslant M$ and $g\in M$. The equality $\Ga_N(\overline{\a})=\overline{\b}^H=\overline{\a}^{gH}$ now gives $\Ga_N(\overline{\a})\subseteq\overline{\a}^M$.
For $\overline{u}\in\overline{\a}^M$, let $m\in M$ such that $\overline{u}=\overline{\a}^m$. The $\overline{G}$-invariance of $\Ga_N$ gives $\Ga_N(\overline{u})=\Ga_N(\overline{\a})^m\subseteq (\overline{\a}^M)^m=\overline{\a}^M$. 
Thus no vertex of $\overline{\a}^M$ is adjacent to a vertex outside $\overline{\a}^M$. The connectedness of $\Ga_N$ forces $\overline{\a}^M=V(\Ga_N)$. 
However, $|\overline{\a}^M|=|M:H|=|T_1:K_1|=8$, while $|V(\Ga_N)|=8^\ell>8$ because $\ell\geqslant2$, a contradiction. Hence $\overline{X}$ is not of affine type, completing the proof.\qed

\begin{lemma}\label{lem:AS-large}
Suppose that $\overline{X}$ is of almost simple type. Then $\Ga_N\cong\K_{p+1}$, where $p\geqslant11$.
\end{lemma}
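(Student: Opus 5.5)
Throughout we work in the standing situation of this part of Section \ref{sec4}: $\overline{G}\cong T^\ell$ with $\ell\geqslant2$ and $T$ nonabelian simple. By Lemma \ref{lem:quotient-reduction}, $\overline{X}$ is quasiprimitive on $V(\Ga_N)$, $\overline{G}$ is transitive, and $\Ga_N$ is a connected $p$-valent $\overline{X}$-arc-transitive graph. Let $L=\soc(\overline{X})$, a nonabelian simple group. By Lemma \ref{subgroup}, $\overline{G}\leqslant L$.

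\textbf{Step 1: reduce to a factorization of $L$.} Since $\overline{G}$ is transitive, $L=\overline{G}L_{\overline{\a}}$. Repeating the argument from the proof of Lemma \ref{lem:simple-quotient}, $L_{\overline{\a}}\neq1$ acts transitively on $\Ga_N(\overline{\a})$. Hence $\Ga_N$ is $L$-arc-transitive, and Lemma \ref{power} gives $v_p(|L_{\overline{\a}}|)=1$.

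\textbf{Step 2: dispose of solvable stabilizers.} Suppose $L_{\overline{\a}}$ is solvable. Then $L=\overline{G}L_{\overline{\a}}$ is a factorization of a simple group with a factor isomorphic to $T^\ell$, $\ell\geqslant2$, and the other factor solvable. By \cite[Theorem 1.1]{LX2022} and Propositions 4.2--4.4 there, every such factorization has the nonsolvable factor with a simple (or at least non-characteristically-simple) structure. This is exactly the argument of the Claim in Lemma \ref{almost simple}, so I would invoke those tables directly. Row by row, either the listed factor is not characteristically simple (Lemma \ref{obs}), or the order comparison $|T|^2>|L|$ kills it, as in the $\PSL_4(q)$ computation there.

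\textbf{Step 3: nonsolvable stabilizers.} Here I would use the classification of factorizations $L=HK$ with $H$ a nonabelian characteristically simple non-simple group, \cite[Theorem 1.2]{LWX2023}. Checking its Table 1 exactly as in Lemma \ref{almost simple}, only line 1 survives. In that line $L=\A_m$, $\overline{G}$ is a regular subgroup $[m]\cong T^\ell$ on $m=|T|^\ell$ points, and $L_{\overline{\a}}\cong\A_{m-1}$ is the point stabilizer. Then $L_{\overline{\a}}$ is $2$-transitive on $V(\Ga_N)$ (natural action of $\A_m$ with $m\geqslant60$), so $\Ga_N\cong\K_m$ and $p=m-1$.

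It remains to confirm the valency claim. Since $\Ga_N\cong\K_{p+1}$ with $p+1=|T|^\ell\geqslant 60^2$, we get $p\geqslant11$ automatically. The reverse direction, that $\K_{p+1}$ genuinely arises, needs no argument for this lemma.

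\textbf{Main obstacle.} The delicate point is Step 3: checking that every other line of \cite[Table 1]{LWX2023} fails to give a factor isomorphic to $T^\ell$ with $\ell\geqslant2$. A case I have to treat separately is $L_{\overline{\a}}$ nonsolvable while $\overline{G}$ is not itself a full factor, i.e.\ $\overline{G}\cap L_{\overline{\a}}\neq1$. In that case I would first pass to a minimal factor via \cite{LX2022}, then use $v_p(|L_{\overline{\a}}|)=1$ together with $|L:\overline{G}|\mid|L_{\overline{\a}}|$ to rule out residual cases.
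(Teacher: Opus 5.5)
\textbf{The proposal has a genuine gap in Step 3, and it is exactly where the lemma has content.}

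\cite[Theorem 1.2]{LWX2023} classifies \emph{exact} factorizations. That is how it is used in Lemma \ref{almost simple}, where $G$ is regular, so $X=GX_\a$ with $G\cap X_\a=1$; line 1 of its table has $|G|=m$ and $|X|=|G||X_\a|$. It is not a classification of factorizations with one factor characteristically simple and non-simple.

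Here $\overline{G}=GN/N$ is only transitive on $V(\Ga_N)$. Indeed $|\overline{G}|/|V(\Ga_N)|=|N|/|G\cap N|$, so $\overline{G}$ is regular only if $N\leqslant G$, and nothing forces this. Moreover, in the regular case your line-1 conclusion $p=|T|^\ell-1$ is impossible: $|T|-1>1$ divides $|T|^\ell-1$, which is the contradiction reached in Lemma \ref{almost simple}. So your argument produces $\Ga_N\cong\K_{p+1}$ only in a case that never occurs, and the bound $p\geqslant11$ is never obtained where it matters.

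Every configuration with $\Ga_N\cong\K_{p+1}$ has $\overline{G}_{\overline{\a}}\neq1$ and nonsolvable $L_{\overline{\a}}\cong\A_p$. For example, $\A_5\times\A_5$ acting on $\A_5$ by $t\mapsto x^{-1}ty$ is a transitive subgroup of $\A_{60}$ with point stabilizer the diagonal $\A_5$. This gives $\Ga_N\cong\K_{60}$, $p=59$ and $L_{\overline{\a}}\cong\A_{59}$.

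Your plan for this case cannot work:
\begin{itemize}
\item ``Pass to a minimal factor via \cite{LX2022}'' fails because \cite{LX2022} needs a solvable factor.
\item ``Rule out residual cases'' is the wrong goal: this case must be identified, not excluded.
\item Your arithmetic tests do not touch it, since $v_p(|\A_p|)=1$ and $|L:\overline{G}|=|L_{\overline{\a}}|/|\overline{G}_{\overline{\a}}|$ automatically divides $|L_{\overline{\a}}|$.
\end{itemize}

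\textbf{The missing ingredient} is a classification of transitive nonabelian characteristically simple, non-simple subgroups of almost simple quasiprimitive groups, with no regularity assumption. The paper uses \cite[Theorem 1.3(1)]{DS2020}. Since $\overline{G}\leqslant L$ by Lemma \ref{subgroup}, and $\overline{G}\neq L$ because $\ell\geqslant2$, this theorem gives $L\cong\A_m$ in its natural action with $m=|V(\Ga_N)|>10$. Hence $\overline{X}\in\{\A_m,\Sy_m\}$ is $2$-transitive, so $\Ga_N\cong\K_m$ and $p=m-1\geqslant11$.

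That single citation replaces your Steps 2 and 3. Your Step 2 via \cite{LX2022} is fine in principle, since that classification needs no exactness, but it becomes unnecessary.
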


\pf\, Since $\overline{X}$ is of almost simple type, by Lemma \ref{subgroup}, we have $\overline{G}\leqslant L$. Since $\ell\geqslant2$, the group $\overline{G}\cong T^\ell$ is not simple, and so $\overline{G}\neq L$. Therefore, \cite[Theorem 1.3 (1)]{DS2020} applies, leading to that $L\cong\A_m$ and hence $\overline{X}\in\{\A_m,\Sy_m\}$ for $m=|V(\Ga_N)|>10$. 
It follows that $\overline{X}$ is $2$-transitive on $V(\Ga_N)$, and so $\Ga_N\cong\K_m$ and $p=m-1$. Since $p$ is an odd prime and $m>10$, we have $p\geqslant11$ and $m=p+1\geqslant12$. This proves the lemma.\qed

\begin{lemma}\label{lem:simple-diagonal-large}
$\overline{X}$ is not of simple diagonal type.
\end{lemma}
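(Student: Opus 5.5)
We argue by contradiction, following the model of Lemma \ref{simple diagonal}. Suppose $\overline{X}$ is of simple diagonal type on $V(\Ga_N)$, so $L=\soc(\overline{X})\cong Q^r$ with $r\geqslant2$, and $L_{\overline{\a}}$ is a full diagonal subgroup of $L$ isomorphic to $Q$. By Lemma \ref{lem:quotient-reduction}(i), $\Ga_N$ is a connected $p$-valent $\overline{X}$-arc-transitive graph with at least three vertices. Hence $\overline{X}_{\overline{\a}}$ contains the nonabelian simple group $L_{\overline{\a}}\cong Q$, and so $\overline{X}_{\overline{\a}}$ is nonsolvable.

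The first step converts this into a high transitivity statement. Our first attempt uses the classification of prime-valent stabilizers quoted before Lemma \ref{power}: \cite[Theorem 1.1]{GF2012}, \cite[Theorem 1.1]{GLH2016} and \cite[Theorem 1.2]{LLL2018}. Since $\overline{X}_{\overline{\a}}$ is nonsolvable, these give that $\Ga_N$ is $(\overline{X},2)$-arc-transitive. Then \cite[Theorem 2]{P1993} says a quasiprimitive $2$-arc-transitive group cannot be of simple diagonal type. This gives the contradiction.

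In case this citation needs an independent check, we also run a counting argument. As in the proof of Lemma \ref{product action}, the normal subgroup $L_{\overline{\a}}\unlhd\overline{X}_{\overline{\a}}$ acts nontrivially on $\Ga_N(\overline{\a})$, so $\Ga_N$ is $L$-arc-transitive and $p\mid|Q|$. Lemma \ref{subgroup} gives $\overline{G}\leqslant L$, and regularity-type counting gives $|L|=|\overline{G}||L_{\overline{\a}}:\overline{G}_{\overline{\a}}|$. Then $|Q|^{r-1}$ divides $|\overline{G}|=|T|^\ell$, and the diagonal structure (via \cite[Theorem 1.3]{DS2020}) forces $T$ to embed in $Q$, since the projections of the $T_i$ to the factors $Q_j$ are injective or trivial.

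The main obstacle lies in this alternative route: showing that the local action $L_{\overline{\a}}^{\Ga_N(\overline{\a})}$, with $L_{\overline{\a}}\cong Q$ simple, is incompatible with simple diagonal type. We would resolve it with the same $2$-arc-transitivity reduction and \cite[Theorem 2]{P1993}. The conclusion is that $\overline{X}$ is not of simple diagonal type.
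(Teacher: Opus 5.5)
Your main argument is exactly the paper's: $L_{\overline{\a}}\cong Q$ makes $(\overline{X})_{\overline{\a}}$ nonsolvable, the prime-valent stabilizer classification then forces $\Ga_N$ to be $(\overline{X},2)$-arc-transitive, and \cite[Theorem 2]{P1993} rules out simple diagonal type. The supplementary counting route is unfinished and can be dropped, since it only falls back on the same reduction.
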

\pf\, Suppose that $\overline{X}$ is of simple diagonal type. Then $(\overline{X})_{\overline{\a}}$ contains $L_{\overline{\a}}\cong Q$, and is therefore nonsolvable. Applying \cite[Propositions 2--5]{DM1980}, \cite[Theorem 1.1]{GF2012}, \cite[Theorem 1.1]{GLH2016}, \cite[Theorem 2.1]{GHX2015}, and \cite[Theorem 1.2]{LLL2018}, the graph $\Ga_N$ is $(\overline{X},2)$-arc-transitive. On the other hand, by \cite[Theorem 2]{P1993}, a $2$-arc-transitive quasiprimitive group cannot be of simple diagonal type, a contradiction. Hence the lemma follows.\qed

The following elementary lemma will be used in the proof of the case where $\overline{X}$ is of product action type.

\begin{lemma}\label{lem:pd}
Let $\Ga_N$ be a connected $p$-valent $L$-arc-transitive graph, where $p$ is an odd prime, and let $\overline{\a}\in V(\Ga_N)$. Suppose that $v_p(|L_{\overline{\a}}|)=1$. Let $r\geqslant2$ and $d$ be positive integers satisfying $d^r\mid |L_{\overline{\a}}|$. Then $L_{\overline{\a}}$ has a subgroup of index $p$ and $p\nmid d$.
\end{lemma}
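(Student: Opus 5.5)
Both conclusions come straight from arc-transitivity and the valuation hypothesis; I will prove them separately.

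\emph{Subgroup of index $p$.} Let $\overline{\b}\in\Ga_N(\overline{\a})$. Since $\Ga_N$ is $L$-arc-transitive of valency $p$, the stabilizer $L_{\overline{\a}}$ is transitive on the $p$ neighbours $\Ga_N(\overline{\a})$. By the orbit--stabilizer theorem, the arc stabilizer $L_{\overline{\a}\overline{\b}}$ has index exactly $p$ in $L_{\overline{\a}}$. This gives the first claim.

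\emph{$p\nmid d$.} Suppose to the contrary that $p\mid d$. Then $p^r\mid d^r$, and $d^r\mid|L_{\overline{\a}}|$ by hypothesis, so
\[
v_p(|L_{\overline{\a}}|)\geqslant v_p(d^r)=r\,v_p(d)\geqslant r\geqslant 2 .
\]
This contradicts the hypothesis $v_p(|L_{\overline{\a}}|)=1$, so $p\nmid d$.

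There is no real obstacle here. The only point to watch is that the first part needs arc-transitivity of $L$ itself, not just of $\overline{X}$. This is assumed in the statement, and in the later application it follows as in the proof of Lemma \ref{product action}. Note also that the valuation hypothesis $v_p(|L_{\overline{\a}}|)=1$ is exactly what Lemma \ref{power} provides for the full stabilizer, and $L_{\overline{\a}}$ is divisible by $p$ by transitivity on the neighbourhood.
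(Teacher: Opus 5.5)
Your proposal is correct and follows the paper's proof essentially verbatim: the orbit--stabilizer theorem applied to the transitive action of $L_{\overline{\a}}$ on the $p$ neighbours gives the subgroup $L_{\overline{\a}\overline{\b}}$ of index $p$, and assuming $p\mid d$ yields $v_p(|L_{\overline{\a}}|)\geqslant r\,v_p(d)\geqslant 2$, contradicting $v_p(|L_{\overline{\a}}|)=1$.
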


\begin{proof}
Let $\overline{\b}\in\Ga_N(\overline{\a})$. The $L$-arc-transitivity of $\Ga_N$ implies that $L_{\overline{\a}}$ is transitive on $\Ga_N(\overline{\a})$. The $p$-valency of $\Ga_N$ gives $|\Ga_N(\overline{\a})|=p$. The stabilizer of $\overline{\b}$ in $L_{\overline{\a}}$ is $L_{\overline{\a}\overline{\b}}$, and hence $|L_{\overline{\a}}:L_{\overline{\a}\overline{\b}}|=|\Ga_N(\overline{\a})|=p$. Thus $L_{\overline{\a}}$ contains a subgroup of index $p$, and $p\mid |L_{\overline{\a}}|$. Assume that $p\mid d$. Then $v_p(d)\geqslant1$. The divisibility $d^r\mid |L_{\overline{\a}}|$ gives $v_p(|L_{\overline{\a}}|)\geqslant v_p(d^r)=rv_p(d)$. Together with $r\geqslant2$, this gives $v_p(|L_{\overline{\a}}|)\geqslant2$, contrary to $v_p(|L_{\overline{\a}}|)=1$. Hence $p\nmid d$.
\end{proof}

\begin{lemma}\label{lem:product-action}
Suppose that $\overline{X}$ is of product action type. Then either $\overline{G}=\soc(\overline{X})\unlhd \overline{X}$, or $r=\ell$, $T<Q$, and $(Q,T,r)$ is listed in Table \ref{tab:PA-final}.
\end{lemma}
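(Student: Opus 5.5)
The plan is to mirror the argument of Lemma \ref{product action}, now applied to $\overline{X}$ acting on $V(\Ga_N)$. Three facts are already in hand. By Lemma \ref{lem:quotient-reduction}, $\Ga_N$ is a connected $p$-valent $\overline{X}$-arc-transitive graph on which $\overline{G}\cong T^\ell$ (with $\ell\geqslant2$) is transitive. By Lemma \ref{subgroup}, $\overline{G}\leqslant L=Q_1\times\cdots\times Q_r$. By Lemma \ref{NQ}, $\Ga$ is an $N$-cover of $\Ga_N$, so $\Ga_N$ is not bipartite, since $G$ has no subgroup of index $2$.

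\textbf{Step 1: $\Ga_N$ is $L$-arc-transitive and $L_{\overline{\a}}$ is a proper diagonal subgroup.} Exactly as in the first paragraph of the proof of Lemma \ref{product action}, $L_{\overline{\a}}^{\Ga_N(\overline{\a})}$ is nontrivial, hence transitive, so $\Ga_N$ is $L$-arc-transitive. Lemma \ref{power} then gives $v_p(|L_{\overline{\a}}|)=1$. If $\overline{G}\cap L_{\overline{\a}}$ were trivial with $\overline{G}=L$, we would be in the first alternative, so assume $\overline{G}\neq L$.

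Next take the maximal normal subgroups $L^{(i)}=\prod_{j\neq i}Q_j$. Repeating the Claim 1 argument (at most two orbits by Lemma \ref{NQ}, and not exactly two since $\Ga_N$ is not bipartite) shows that $L_{\overline{\a}}$ is a full diagonal subgroup of $R_1\times\cdots\times R_r$, with $R_i<Q_i$ and $R_i\cong R$. Hence $L_{\overline{\a}}\cong R\lesssim Q$ and $|R|<|Q|$. The simple-diagonal component case is excluded exactly as in Claim 2, because $v_p(|L_{\overline{\a}}|)\geqslant2$ would follow.

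\textbf{Step 2: $r=\ell$ and $T<Q$.} Apply \cite[Theorem 1.3]{DS2020} to the transitive characteristically simple subgroup $\overline{G}\leqslant L$. This yields cases (a)--(c) as before. Case (c), $L\cong\A_m^r$ with $L_{\overline{\a}}\cong\A_{m-1}^r$, is excluded since $v_p\geqslant r\geqslant2$. Case (b), with strips of which at least one has support of size $2$, is killed by the same valuation count $rt_2-\ell t_1=1$ as in Claim 1. That count forces $r=\ell+1$ and $|Q|=|T|$, contradicting $|R|<|Q|$. So case (a) holds, giving $r=\ell$ and $T_i\leqslant Q_i$. If some $T_i=Q_i$, then by symmetry under the transitive action of $\overline{X}$ on the factors together with the order equation, all $T_i=Q_i$ and $\overline{G}=L\unlhd\overline{X}$. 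Otherwise $T<Q$.

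\textbf{Step 3: arithmetic, which I expect to be the main obstacle.} From $L=\overline{G}L_{\overline{\a}}$ we obtain
\[
|Q|^r=|T|^r\,|L_{\overline{\a}}|/|T^r\cap L_{\overline{\a}}|,
\]
where $R:=L_{\overline{\a}}\lesssim Q$ and $|R|<|Q|$. To invoke Lemma \ref{lem:equal-prime-divisors} I still need $\Pi(|T|)=\Pi(|Q|)$. For this I would use Lemma \ref{lem:pd} with $d=|Q:T|$: since $d^r$ divides $|L_{\overline{\a}}|$, we get $p\nmid d$. To conclude that every prime divisor of $|Q|$ divides $|T|$, argue as follows. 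If a prime $z$ divides $|Q|$ but not $|T|$, then $v_z(|R|)\geqslant r\,v_z(|Q|)\geqslant 2v_z(|Q|)$, which contradicts $R\lesssim Q$.

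With $\Pi(|T|)=\Pi(|Q|)$ established, Lemma \ref{lem:equal-prime-divisors} applies and places $(Q,T,r)$ in Tables \ref{tab:2.1} and \ref{tab:2.2}. Finally, prune those entries using the facts that $L_{\overline{\a}}$ is a $p$-valent arc-transitive stabilizer, namely $v_p=1$ and $L_{\overline{\a}}$ has a subgroup of index $p$, together with the stabilizer structure results. The surviving entries form Table \ref{tab:PA-final}. The delicate part is this pruning, especially row 1 (the alternating family) and the $r=3$ Mathieu cases, where one must check that no admissible stabilizer with an index-$p$ subgroup exists.
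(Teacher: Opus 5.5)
\textbf{The gap is Step 3.} The lemma asserts that $(Q,T,r)$ lies in Table \ref{tab:PA-final}, which is strictly smaller than Tables \ref{tab:2.1}--\ref{tab:2.2}. Removing the extra entries is most of the paper's proof, and you only announce it. You also aim at the wrong targets. Row 1 (the alternating family) passes into Table \ref{tab:PA-final} unchanged. What must actually be excluded is:
\begin{itemize}
\item row 5 of Table \ref{tab:2.1} ($\PSp_4(q)$ over $\PSL_2(q^2)$);
\item rows 2--3 with $m=2$, which reduce to row 5;
\item every entry of Table \ref{tab:2.2} except $(\M_{24},\M_{23},2)$. For that entry one must still rule out $r=3$ and $p\in\{11,23\}$ and pin down $L_{\overline{\a}}$.
\end{itemize}

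The toolkit you name ($v_p=1$, an index-$p$ subgroup, the stabilizer classifications) cannot do this. Two cases pass every such local test: $L_{\overline{\a}}\cong\M_{11}$ with $p=11$ for $(\M_{12},\M_{11},2)$, and $L_{\overline{\a}}\cong\M_{23}$ with $p=23$ for $(\M_{24},\M_{23},2)$. Indeed, these are the vertex stabilizers of $\K_{12}$ and $\K_{24}$ under $\M_{12}$ and $\M_{24}$. The paper kills them with global consequences of $L=\overline{G}L_{\overline{\a}}$:
\begin{itemize}
\item Projecting to a factor gives $Q_i=T_iR_i$, so $\M_{23}$ would need a subgroup of index $24$, which it does not have.
\item For $\M_{12}$, set $H_1=L_{\overline{\a}}\cap(T_1\times Q_2)$ and $H_2=L_{\overline{\a}}\cap(Q_1\times T_2)$. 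Both have index $12$ in $L_{\overline{\a}}\cong\M_{11}$, and $|H_1\cap H_2|=|L_{\overline{\a}}\cap\overline{G}|=7920/144=55$. But $2$-transitivity of $\M_{11}$ on $12$ points forces $|H_1\cap H_2|=60$.
\end{itemize}
The remaining entries need a different observation. Since $d^r\mid|R|$, the index $|Q:R|$ divides the small number $|Q|/d^r$. This is then compared with minimal subgroup indices: Kleidman--Liebeck for $\PSp_4(q)$, where $|Q|/d^2=(q^2+1)/(2,q-1)$, and the Atlas for the small cases. The pentavalent stabilizer list is also needed for $\Sp_6(2)$, $\M_{11}$ and $\M_{12}$.

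Steps 1--2 and your derivation of $\Pi(|T|)=\Pi(|Q|)$ are essentially sound, but need two repairs.
\begin{itemize}
\item The case $Q_i\leqslant\overline{G}$ for some $i$ must be split off before invoking the trichotomy (a)--(c), as the paper does. There the order equation alone gives $r=\ell$ and $\overline{G}=L$. Your appeal to symmetry under $\overline{X}$ is invalid, because $\overline{G}$ is not $\overline{X}$-invariant.
\item $\overline{G}$ need not be regular on $V(\Ga_N)$, so the count borrowed from Claim 1 only gives $v_p(|L|/|\overline{G}|)\leqslant1$. This still excludes case (b). In fact the plain estimate $|Q|^r\leqslant|T|^\ell|R|<|Q|^{\ell+1}$ already contradicts $r\geqslant\ell+1$. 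That is shorter than the paper's prime-by-prime check of the pairs in Table \ref{tab:groups}.
\end{itemize}
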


\pf\, Since $1\neq L=\soc(\overline{X})\unlhd\overline{X}$ and $\overline{X}$ is quasiprimitive on $V(\Ga_N)$, the group $L$ is transitive on $V(\Ga_N)$. Moreover, since $\overline{X}$ is of product action type, \cite[Section 2, Remark 2.1]{P1993} implies that $ L_{\overline{\a}}$ is a nontrivial normal subgroup of $(\overline{X})_{\overline{\a}}$. As $\Ga_N$ is $p$-valent and $\overline{X}$-arc-transitive, $(\overline{X})_{\overline{\a}}^{\Ga_N(\overline{\a})}$ is primitive of degree $p$, and hence $L_{\overline{\a}}^{\Ga_N(\overline{\a})}$ is either trivial or transitive.

Suppose that $L_{\overline{\a}}^{\Ga_N(\overline{\a})}=1$. Then $L_{\overline{\a}}$ fixes every neighbor of $\overline{\a}$. For $\overline{\b}\in\Ga_N(\overline{\a})$, we have $L_{\overline{\a}}\leqslant L_{\overline{\b}}$. Since $L$ is vertex-transitive, $|L_{\overline{\a}}|=|L_{\overline{\b}}|$, and so $L_{\overline{\a}}=L_{\overline{\b}}$. By the connectivity of $\Ga_N$, the subgroup $L_{\overline{\a}}$ fixes every vertex of $\Ga_N$. Since $\overline{X}$, and so $L$, acts faithfully on $V(\Ga_N)$, we obtain $L_{\overline{\a}}=1$, a contradiction. Then $L_{\overline{\a}}^{\Ga_N(\overline{\a})}$ is transitive on $\Ga_N(\overline{\a})$. Thus $\Ga_N$ is $L$-arc-transitive and $L$-locally primitive.

Suppose first that $\overline{X}$ is of product action type arising from simple diagonal action. By \cite[Section 3, pp. 303--304]{BP2003}, the group $L_{\overline{\a}}$ is a direct product of at least two isomorphic nontrivial groups. Since $L_{\overline{\a}}^{\Ga_N(\overline{\a})}$ is transitive, we have $p\mid|L_{\overline{\a}}|$. It follows that $v_p(|L_{\overline{\a}}|)\geqslant2$, and so $v_p(|(\overline{X})_{\overline{\a}}|)\geqslant2$, contrary to the result of Lemma \ref{power}. Thus $\overline{X}$ is of product action type arising from almost simple action.

By \cite[Section 2]{P1993}, the group $L_{\overline{\a}}$ is a subdirect subgroup of $R_1\times\cdots\times R_r$, where $1\neq R_i<Q_i$ and $R_i\cong R_1$ for every $i$. Let $\pi_i:R_1\times\cdots\times R_r\to R_i$ be the natural projection and $\eta_i=\pi_i|_{L_{\overline{\a}}}$. Then each $\eta_i$ is surjective. Let $L_i=\prod_{j\neq i}Q_j$. Then $\ker(\eta_i)=L_i\cap L_{\overline{\a}}=(L_i)_{\overline{\a}}$.

We claim that every $\eta_i$ is injective. On the contrary, we suppose that $\ker(\eta_i)=(L_i)_{\overline{\a}}\neq1$ for some $i$. Then $L_i$ is not semiregular on $V(\Ga_N)$. Since $\Ga_N$ is connected and $L$-locally primitive, Lemma \ref{NQ} (1) implies that $L_i$ has at most two orbits. If $L_i$ had exactly two orbits, then, since $L_i\unlhd L$, these two orbits would form an $L$-invariant partition. As $L$ is transitive, the induced action of $L$ on the two parts would be transitive and so $L$ contains a normal subgroup with index 2, contrary to the fact that $L\cong Q^r$. Hence $L_i$ is transitive. Thus $L=L_iL_{\overline{\a}}$, and so $Q_i\cong L/L_i=L_iL_{\overline{\a}}/L_i\cong L_{\overline{\a}}/(L_i)_{\overline{\a}}=L_{\overline{\a}}/\ker(\eta_i)=L_{\overline{\a}}/\ker(\pi_i|_{L_{\overline{\a}}})\cong R_i$ as $\eta_i=\pi_i|_{L_{\overline{\a}}}$ is surjective, contradicting that assumption that $R_i<Q_i$. Thus every $\eta_i$ is injective, as claimed above. 

Based on the previous discussion, we know that each $\eta_i$ is bijection. Hence $L_{\overline{\a}}$ is a full diagonal subgroup of $R_1\times\cdots\times R_r$, i.e., $L_{\overline{\a}}\cong R_i$ for every $i$ (see Definition \ref{defi} for example). Fix $i$, noting that $R_i\cong L_{\overline{\a}}$, one yields that $1<|R_i|<|Q|$ and $|V(\Ga_N)|=|L:L_{\overline{\a}}|=|Q|^r/|R_i|$. Derived from $\overline{G}$ is transitive on $V(\Ga_N)$, we yield that $|Q|^r/|R_i|$ divides $|T|^\ell$. Pick up arbitrary element $z$ from $\Pi(|Q|)$. It deduce from $R_i$ is isomorphic to a subgroup of $Q$ and $r\geqslant2$ that $v_z(|Q|^r/|R_i|)\geqslant(r-1)v_z(|Q|)>0$. Hence $z\in\Pi(|T|)$ and  $\Pi(|Q|)\subseteq\Pi(|T|)$. 
Lemma \ref{subgroup} gives $\overline{G}\leqslant L\cong Q^r$, which implies $\Pi(|T|)\subseteq\Pi(|Q|)$. Thus  $\Pi(|Q|)=\Pi(|T|)$.

Suppose that $\overline{G}$ contains a minimal normal subgroup $Q_i$ of $L$ for some $i\in\{1,2,\cdots,r\}$. Since $Q_i\unlhd L$ and $Q_i\leqslant\overline{G}\leqslant L$, we have $Q_i\unlhd\overline{G}$. As $Q_i$ is simple and $\overline{G}\cong T^\ell$, it follows that $Q_i\cong T$, and so $Q\cong T$. Since $\overline{G}$ is transitive, $L=\overline{G}L_{\overline{\a}}$ and so $|T|^r=|T|^\ell|L_{\overline{\a}}|/|\overline{G}\cap L_{\overline{\a}}|$. As $|L_{\overline{\a}}|=|R_i|<|T|$, we obtain $r\leqslant\ell$. On the other hand, $|\overline{G}|=|T|^\ell$ divides $|L|=|T|^r$, and so $\ell\leqslant r$. Thus $r=\ell$ and $|\overline{G}|=|L|$, so $\overline{G}=L=\soc(\overline{X})\unlhd\overline{X}$. Thus the first conclusion of the lemma follows.

Assume now that $\overline{G}$ contains no minimal normal subgroup of $L$. By \cite[Theorem 1.3 (3)]{DS2020}, one of the following holds:
\begin{enumerate}[leftmargin=1.2em,label=(\alph*)]
\item $\ell=r$, $\overline{G}=T_1\times\cdots\times T_r$ with $T_i<Q_i$ for every $i\in\{1,2,\cdots,r\}$.
\item The $T_i$ are pairwise disjoint strips in $L$, $|\mathcal{P}(T_i)|\in\{1,2\}$ for every $i$, $|\mathcal{P}(T_i)|=2$ for at least one $i$, and $(T,Q)$ is listed in Table \ref{tab:groups}.
\item $L\cong(\A_t)^r$, $|V(\Ga_N)|=t^r$, and $t\geqslant10$.
\end{enumerate}

\begin{table}[htbp]
\centering
\renewcommand{\arraystretch}{1.3}
\setlength{\tabcolsep}{10pt}
\caption{The pairs $(T,Q)$ arising in case (b).}
\label{tab:groups}
{\scriptsize
\begin{tabular}{|c|c|c|}
\hline
$T$ & $Q$ & remark \\
\hline
$\A_6$    & $\A_6$ & \\
\hline
$\M_{12}$ & $\M_{12}$, $\A_{12}$ & \\
\hline
$\POmega_8^+(q)$ & $\POmega_8^+(q)$ & $q\geqslant2$ \\
& $\A_n$           & $n=|\POmega_8^+(q):\Omega_7(q)|$ \\
& $\Sp_8(2)$       & $q=2$ \\
\hline
$\Sp_4(2^a)$     & $\Sp_{4b}(2^{a/b})$ & $b\mid a$, $a\geqslant2$ \\
& $\A_n$              & $n=|\Sp_4(2^a):\Sp_2(2^{2a}).\ZZ_2|$, $a\geqslant2$ \\
\hline
\end{tabular}}
\end{table}

We first exclude (b). Since the supports $\mathcal{P}(T_i)$ are pairwise disjoint and $|\mathcal{P}(T_i)|=2$ for at least one $i$, we have $r\geqslant\sum_{i=1}^{\ell}|\mathcal{P}(T_i)|\geqslant\ell+1$, and so $\ell\leqslant r-1$. If $T\cong Q$, then $|\overline{G}|=|Q|^\ell\leqslant|Q|^{r-1}<|Q|^r/|R_i|=|V(\Ga_N)|$, contrary to the transitivity of $\overline{G}$. Thus $T\not\cong Q$. It remains to consider the non-isomorphic pairs in Table \ref{tab:groups}.

For $(Q,T)=(\A_{12},\M_{12})$, we have $7\in\Pi(|Q|)\setminus\Pi(|T|)$. For $(Q,T)=(\Sp_8(2),\POmega_8^+(2))$, we get that $17\in\Pi(|Q|)\setminus\Pi(|T|)$. Let $(Q,T)=(\A_n,\POmega_8^+(q))$, where $n=|\POmega_8^+(q):\Omega_7(q)|=\frac{(2,q-1)}{(4,q^4-1)}q^3(q^4-1)$. Every prime divisor of $|T|$ is at most $q^6-1$. If $q=2$, then $n=120$ and $11\in\Pi(|\A_{120}|)\setminus\Pi(|\POmega_8^+(2)|)$. If $q=3$, then $n=1080$ and $11\in\Pi(|\A_{1080}|)\setminus\Pi(|\POmega_8^+(3)|)$. Assume that $q\geqslant4$. Then $n=q^3(q^4-1)>2(q^6-1)$ whenever $q$ is even and $n=q^3(q^4-1)/2>2(q^6-1)$ whenever $q$ is odd. By \cite[Theorem 418]{HW1960}, there is a prime $z$ such that $q^6-1<z<2(q^6-1)<n$. Hence $z\in\Pi(|\A_n|)\setminus\Pi(|T|)$.

Suppose that $(Q,T)=(\Sp_{4b}(2^{a/b}),\Sp_4(2^a))$, where $b\mid a$ and $a,b\geqslant2$, and put $q_0=2^{a/b}$. Then $|Q|=q_0^{4b^2}\prod_{i=1}^{2b}(q_0^{2i}-1)$ and $|T|=q_0^{4b}(q_0^{2b}-1)(q_0^{4b}-1)$. If $(b,q_0)=(2,2)$, then $7\in\Pi(|Q|)\setminus\Pi(|T|)$. Suppose that $b=2$ and $q_0>2$. By Lemma \ref{lem:ppd}, there is a primitive prime divisor $z$ of $q_0^6-1$. Then $z\mid|Q|$, but $z\nmid q_0^4-1$ and $z\nmid q_0^8-1$, so $z\notin\Pi(|T|)$. Suppose now that $b\geqslant3$. By \cite[Theorem 418]{HW1960}, there is a prime $w$ with $b<w<2b$. By Lemma \ref{lem:ppd}, there is a primitive prime divisor $z$ of $q_0^{2w}-1$ and $z\mid|Q|$. It follows from $2w\nmid2b$ and $2w\nmid4b$ that $z\nmid|T|$, and so $\Pi(|Q|)\neq\Pi(|T|)$. Assume that $(Q,T)=(\A_n,\Sp_4(2^a))$ for $q=2^a\geqslant4$ and $n=|\Sp_4(q):\Sp_2(q^2).\ZZ_2|$. Since $|\Sp_4(q)|=q^4(q^2-1)(q^4-1)$ and $|\Sp_2(q^2).\ZZ_2|=2q^2(q^4-1)$, we have $n=q^2(q^2-1)/2$. Every prime divisor of $|T|$ is at most $q^2+1$, and $n>2(q^2+1)$. By \cite[Theorem 418]{HW1960}, there is a prime $z$ such that $q^2+1<z<2(q^2+1)<n$ and so $z\in\Pi(|\A_n|)\setminus\Pi(|T|)$.

The preceding two paragraphs show that  $\Pi(|Q|)\neq\Pi(|T|)$ in every remaining possibility, a contradiction arisen as $\Pi(|Q|)=\Pi(|T|)$. Hence the case (b) does not occur.

We assert that the case (c) is impossible. On the contrary, we assume that the case (c) holds, that is $L\cong(\A_t)^r$, $|V(\Ga_N)|=t^r$ and $t\geqslant10$. Derived from $\Gamma_N$ is $p$-valent $L$-arc-transitive, we deduce that $p\mid|L_{\overline{\a}}|$. And because $|L_{\overline{\a}}|=|L|/|V(\Ga_N)|=|(\A_t)^r|/t^r=((t-1)!/2)^r$ and $r\geqslant 2$, one yields that $v_p(|L_{\overline{\a}}|)\geqslant r\geqslant 2$, which contrary to the fact that $v_p(|L_{\overline{\a}}|)=1$. Hence the assertion holds.

Now that case (a) holds, that is, $\ell=r$ and $\overline{G}=T_1\times\cdots\times T_r$, where $T_i<Q_i$ for every $i\in\{1,2,\ldots,r\}$. Let $d=|Q:T|$. The relations $L\cong Q^r$ and $\overline{G}\cong T^r$ give $|L|/|\overline{G}|=|Q|^r/|T|^r=d^r$. 
Moreover, $\overline{G}\leqslant L$ is transitive on $V(\Ga_N)$, yielding $L=\overline{G}L_{\overline{\a}}$ and so $|L|/|\overline{G}|=|\overline{G}L_{\overline{\a}}|/|\overline{G}|= |\overline{G}||L_{\overline{\a}}|/(|\overline{G}\cap L_{\overline{\a}}||\overline{G}|) =|L_{\overline{\a}}|/|L_{\overline{\a}}\cap \overline{G}|$. 
Thus $d^r=|L_{\overline{\a}}|/|L_{\overline{\a}}\cap \overline{G}|$, which, together with $L_{\overline{\a}}\cong R_i$, gives $d^r\mid |R_i|$. 
Recalling that $R_i<Q_i$ and $r\geqslant2$, the divisibilities $d^r\mid |R_i|$ and $|R_i|\mid |Q_i|$ imply $d^r\mid |Q_i|$. It follows that $2\leqslant r\leqslant rv_z(d)\leqslant v_z(|Q_i|)$ for every prime $z\mid d$. Together with $\Pi(|T|)=\Pi(|Q|)$, all the hypotheses of Lemma \ref{lem:equal-prime-divisors} are satisfied. Hence $(Q,T,r)$ occurs in Tables \ref{tab:2.1} and \ref{tab:2.2}.

We first consider the candidates listed in Table \ref{tab:2.1}. For the last row of Table \ref{tab:2.1}, $(Q_i,T_i)=(\PSp_4(q),\PSL_2(q^2))$ and $q>2$. The group orders are $|Q_i|=q^4(q^2-1)(q^4-1)/(2,q-1)$ and $|T_i|=q^2(q^4-1)/(2,q^2-1)$. 
The equality $(2,q^2-1)=(2,q-1)$ gives $d=|Q_i:T_i|=q^2(q^2-1)$. Hence $|Q_i|/d^2=(q^4(q^2-1)(q^4-1)/(2,q-1))/(q^4(q^2-1)^2)=(q^4-1)/((2,q-1)(q^2-1))=(q^2+1)/(2,q-1)$. Suppose first that $q>3$. By \cite[Theorem 5.2.2 and Table 5.2. A]{KL1990}, every proper subgroup of $\PSp_4(q)$ has index at least $(q^4-1)/(q-1)=(q+1)(q^2+1)$. In particular, $R_i<Q_i\cong\PSp_4(q)$ gives $|Q_i:R_i|\geqslant(q+1)(q^2+1)$. 
Together with $d^2\mid |R_i|$, this gives $d^2(q+1)(q^2+1)\leqslant |R_i||Q_i:R_i|=|Q_i|$,  $(q+1)(q^2+1)\leqslant |Q_i|/d^2=(q^2+1)/(2,q-1)$, i.e., $(q+1)\leqslant 1/(2,q-1)$, which is impossible. For $q=3$, every proper subgroup of $\PSp_4(3)$ has index at least 27 by \cite{Atlas}. Hence $|Q_i:R_i|\geqslant27$, and $d^2\mid |R_i|$ gives $|Q_i|/d^2\geqslant|Q_i:R_i|\geqslant 27$. 
However, $|Q_i|/d^2=(q^2+1)/(2,q-1)=(3^2+1)/(2,3-1)=5$, which is impossible. Thus the last row of Table \ref{tab:2.1} does not occur. 
For rows 2 and 3 of Table \ref{tab:2.1}, the case $m=2$ gives $\POmega_5(q)\cong\PSp_4(q)$ and $\POmega^-_4(q)\cong\PSL_2(q^2)$, respectively, which reduces to the last row considered above. 
Thus $m\geqslant4$ in rows 2 and 3. The possibilities arising from Table \ref{tab:2.1} are precisely those listed in rows 1--4 of Table \ref{tab:PA-final}.

Now we consider the candidates listed in Table \ref{tab:2.2}. Suppose first that $(Q_i,T_i,r)=(\PSU_4(2),\A_6,2)$. Then $d=|Q_i|/|T_i|=25920/360=72$, giving $|Q_i|/d^2=25920/72^2=5$. Let $M$ be a maximal subgroup of $Q_i$ containing $R_i$. The inclusion $R_i\leqslant M<Q_i$ gives $|Q_i:M|\mid |Q_i:R_i|$, while $d^2\mid |R_i|$ gives $|Q_i:R_i|\mid |Q_i|/d^2=5$. Hence $|Q_i:M|=|Q_i:R_i|=5$, but \cite{Atlas} shows that $\PSU_4(2)$ has no maximal subgroup of index $5$. The same argument applies to $(Q_i,T_i,r)=(\PSL_6(2),\PSL_5(2),2)$ and $(\POmega_8^+(2),\A_9,2)$. In these two cases, $|Q_i|/d^2=4960$ and $189$, respectively, whereas \cite{Atlas} shows that no maximal subgroup of the corresponding group $Q_i$ has index dividing these numbers. Hence neither case occurs.

Assume that $(Q_i,T_i,r)=(\Sp_6(2),\A_8,2)$, as listed in Table \ref{tab:2.2}. The group orders $|Q_i|=1451520$ and $|T_i|=20160$ give $d=|Q_i:T_i|=72$ and $|Q_i|/d^2=280$. Choose a maximal subgroup $M$ of $Q_i$ containing $R_i$. The relations $R_i\leqslant M<Q_i$ and $d^2\mid |R_i|$ give $|Q_i:M|\mid |Q_i:R_i|$ and $|Q_i:R_i|\mid280$. Thus $|Q_i:M|\mid280$. By \cite{Atlas}, the only maximal subgroup of $\Sp_6(2)$ whose index divides 280 is $M\cong\PSU_4(2).\ZZ_2$, with $|Q_i:M|=28$ and $|M|=51840=2\cdot5\cdot72^2$. The isomorphism $L_{\overline{\a}}\cong R_i$ and the divisibility $d^2\mid |R_i|$ allow Lemma \ref{lem:pd} to be applied with $r=2$. Thus $R_i$ contains a subgroup of index $p$, where $p\mid |R_i|$ and $p\nmid d$. The inclusion $R_i\leqslant M$ gives $p\mid |M|$. The prime divisors of $|M|$ are 2, 3 and 5, while $p\nmid72$, so $p=5$. The relations $72^2\mid |R_i|$ and $5\mid |R_i|$, together with $(5,72)=1$, give $5\cdot72^2=25920\mid |R_i|$. The graph $\Ga_N$ is connected pentavalent $L$-arc-transitive. By \cite[Theorem 1.1]{GF2012}, its vertex stabilizer satisfies $|L_{\overline{\a}}|\leqslant23040$. This contradicts $L_{\overline{\a}}\cong R_i$ and $25920\mid |R_i|$. Thus $(Q_i,T_i,r)=(\Sp_6(2),\A_8,2)$ does not occur.

Assume next that $(Q_i,T_i,r)=(\M_{11},\PSL_2(11),2)$. Here $d=|Q_i:T_i|=12$ and $|Q_i|/d^2=7920/12^2=55$. Choose a maximal subgroup $M$ of $Q_i$ containing $R_i$. The relations $R_i\leqslant M<Q_i$ and $12^2\mid |R_i|$ give $|Q_i:M|\mid |Q_i:R_i|$ and $|Q_i:R_i|\mid55$. Thus $|Q_i:M|\mid55$. By \cite{Atlas}, the maximal-subgroup indices of $\M_{11}$ dividing 55 are 11 and 55.
Suppose first that $|Q_i:M|=55$. Then $|M|=144$. The divisibility $12^2\mid |R_i|$ and the inclusion $R_i\leqslant M$ force $|R_i|=144$. Lemma \ref{lem:pd} gives $p\mid |R_i|$ and $p\nmid12$. This is impossible because $|R_i|=144=2^4\cdot3^2$. Thus $|Q_i:M|=11$. By \cite{Atlas}, $M\cong\M_{10}$ and $|M|=720$. The relations $p\mid |R_i|$, $R_i\leqslant M$ and $p\nmid12$ force $p=5$. Together with $12^2\mid |R_i|$ and $(5,12)=1$, this gives $5\cdot12^2=720\mid |R_i|$. The equality $|M|=720$ forces $R_i=M\cong\M_{10}$.
The isomorphism $L_{\overline{\a}}\cong R_i$ gives $|L_{\overline{\a}}|=720$ and $L_{\overline{\a}}\cong\M_{10}$. By \cite[Theorem 1.1]{GF2012}, a vertex stabilizer of order $720$ in a connected pentavalent arc-transitive graph is isomorphic to $\A_4\times\A_5$. This contradicts $\M_{10}\not\cong\A_4\times\A_5$. Thus $(Q_i,T_i,r)=(\M_{11},\PSL_2(11),2)$ does not occur.

We now consider the case $(Q_i,T_i)=(\M_{12},\M_{11})$ listed in Table \ref{tab:2.2}. Then $d=|Q_i:T_i|=12$ and $r\in\{2,3\}$. The isomorphism $L_{\overline{\a}}\cong R_i$ and the divisibility $d^r\mid |R_i|$ allow Lemma \ref{lem:pd} to be applied. Thus $R_i$ contains a subgroup of index $p$, $p\mid |R_i|$ and $p\nmid d=12$. 
Suppose first that $r=3$. Let $M$ be a maximal subgroup of $Q_i$ containing $R_i$. The divisibility $12^3\mid |R_i|$ gives $|Q_i:R_i|\mid |Q_i|/12^3=55$, and hence $|Q_i:M|\mid55$. Thus $|Q_i:M|\in\{5,11,55\}$. However, by \cite{Atlas}, $\M_{12}$ has no maximal subgroup of any of these indices. Hence $r=2$. In this case, $|Q_i:M|\mid |Q_i|/12^2=660$, and \cite{Atlas} gives $M\cong\M_{11}$, $\A_6.\ZZ_2^2$, or $\ZZ_3^2.\ZZ_2.\Sy_4$. The relation $p\nmid12$ excludes $p=2$ and $p=3$, while $p\mid |R_i|$ and $R_i\leqslant M$ imply $p\mid |M|$. If $M\cong\ZZ_3^2.\ZZ_2.\Sy_4$, then $\Pi(|M|)=\{2,3\}$, a contradiction. Thus $M\cong\ZZ_3^2.\ZZ_2.\Sy_4$ is impossible. 

Suppose that $M\cong\A_6.\ZZ_2^2$. Then $\Pi(|M|)=\{2,3,5\}$, and the conditions $p\mid |M|$ and $p\nmid12$ give $p=5$. Moreover, $12^2\mid |R_i|$ and $5\mid |R_i|$. As $(5,12)=1$, it follows that $5\cdot12^2=720\mid |R_i|$. The inclusion $R_i\leqslant M$ and $|M|=1440$ then give $|R_i|\in\{720,1440\}$. Moreover, $|M:R_i|\leqslant2$ implies $\soc(M)\cong\A_6\leqslant R_i$. However, by \cite[Theorem 1.1]{GF2012}, a pentavalent vertex stabilizer of order 720 or 1440 is isomorphic to $\A_4\times\A_5$ or $(\A_4\times\A_5)\rtimes\ZZ_2$, respectively, neither of which has a composition factor isomorphic to $\A_6$. Hence this case does not occur.

It remains to consider $M\cong\M_{11}$. Then $R_i\leqslant M\cong\M_{11}$. The relations $p\mid |R_i|$ and $p\nmid12$, together with $|\M_{11}|=2^4\cdot3^2\cdot5\cdot11$, give $p\in\{5,11\}$. Suppose first that $p=5$. Then $5\mid |R_i|$, while $d^2=12^2\mid |R_i|$. As $(5,12)=1$, we obtain $5\cdot12^2=720\mid |R_i|$. 
By \cite[Theorem 1.1]{GF2012}, the pentavalent vertex stabilizer $L_{\overline{\a}}\cong R_i$ is one of the groups listed there. Among these candidates, the conditions $R_i\leqslant\M_{11}$ and $720\mid |R_i|$ leave only $R_i\cong\A_4\times\A_5$, which has order 720. Hence $|\M_{11}:R_i|=7920/720=11$. 
However, \cite{Atlas} shows that a subgroup of index 11 in $\M_{11}$ is isomorphic to $\M_{10}\cong\A_6.\ZZ_2$, a contradiction. Thus $p\neq 5$. It follows that $p=11$. In this case, $11\mid |R_i|$ and $12^2\mid |R_i|$. 
Since $(11,12)=1$, we obtain $11\cdot12^2=1584\mid |R_i|$. The inclusion $R_i\leqslant\M_{11}$ gives $|R_i|\mid7920$, and $7920=5\cdot1584$. Thus $|\M_{11}:R_i|=7920/|R_i|$ divides 5. By \cite{Atlas}, $\M_{11}$ has no proper subgroup of index 5, so $R_i=\M_{11}$. Thus $p=11$ and $L_{\overline{\a}}\cong R_i\cong\M_{11}$.

The preceding discussion shows that $L=Q_1\times Q_2$, $\overline{G}=T_1\times T_2$ and $L_{\overline{\a}}\cong\M_{11}$, where $Q_i\cong\M_{12}$ and $T_i\cong\M_{11}$ for $i=1,2$. 
Set $H_1=L_{\overline{\a}}\cap(T_1\times Q_2)$ and $H_2=L_{\overline{\a}}\cap(Q_1\times T_2)$. The inclusion $\overline{G}\leqslant T_1\times Q_2$, together with $L=\overline{G}L_{\overline{\a}}$, implies $L=(T_1\times Q_2)L_{\overline{\a}}$. 
The product formula gives $|L|=|T_1\times Q_2||L_{\overline{\a}}|/|(T_1\times Q_2)\cap L_{\overline{\a}}|$. Dividing by $|T_1\times Q_2|$ yields $|L:T_1\times Q_2|=|L_{\overline{\a}}:(T_1\times Q_2)\cap L_{\overline{\a}}|=|L_{\overline{\a}}:H_1|$. Also, $|L:T_1\times Q_2|=|Q_1\times Q_2:T_1\times Q_2|=|Q_1:T_1|=12$. Hence $|L_{\overline{\a}}:H_1|=12$. Reversing the roles of the two direct factors gives $|L_{\overline{\a}}:H_2|=12$. Note that $H_1\cap H_2=L_{\overline{\a}}\cap(T_1\times Q_2)\cap(Q_1\times T_2)=L_{\overline{\a}}\cap(T_1\times T_2)=L_{\overline{\a}}\cap\overline{G}$. 
The factorization $L=\overline{G}L_{\overline{\a}}$ gives $|L|=|\overline{G}||L_{\overline{\a}}|/|\overline{G}\cap L_{\overline{\a}}|$. Dividing by $|\overline{G}|$ yields $|L:\overline{G}|=|L_{\overline{\a}}:L_{\overline{\a}}\cap\overline{G}|=|L_{\overline{\a}}:H_1\cap H_2|$. On the other hand, $|L:\overline{G}|=|Q_1\times Q_2:T_1\times T_2|=|Q_1:T_1||Q_2:T_2|=12^2$. Hence $|L_{\overline{\a}}:H_1\cap H_2|=12^2$.
Together with $|L_{\overline{\a}}|=|\M_{11}|=7920$, this forces $|H_1\cap H_2|=55$. Both $H_1$ and $H_2$ have index 12 in $L_{\overline{\a}}\cong\M_{11}$. By \cite{Atlas}, all subgroups of index 12 in $\M_{11}$ are conjugate, and the action of $\M_{11}$ on the 12 right cosets of such a subgroup is 2-transitive. The equality $H_1=H_2$ would give $|H_1\cap H_2|=|H_1|=660$, which is incompatible with $|H_1\cap H_2|=55$. Thus $H_1\neq H_2$. Choose $g\in L_{\overline{\a}}$ such that $H_2=H_1^g$. In the action of $L_{\overline{\a}}$ on the right cosets of $H_1$, the stabilizers of $H_1$ and $H_1g$ are $H_1$ and $H_2$, respectively. These two points are distinct, and $2$-transitivity implies that $H_1$ is transitive on the remaining $11$ points. The stabilizer of $H_1g$ in $H_1$ is $H_1\cap H_2$, and hence $|H_1:H_1\cap H_2|=11$. It follows that $|H_1\cap H_2|=660/11=60$, contradicting $|H_1\cap H_2|=55$. Thus $(Q_i,T_i)=(\M_{12},\M_{11})$ does not occur.

Finally, we turn to the case $(Q_i,T_i)=(\M_{24},\M_{23})$ and $r\in\{2,3\}$ listed in Table \ref{tab:2.2}. In this case, $d=|Q_i:T_i|=24$ and $d^r\mid |R_i|$. The relations $p\mid |R_i|$, $R_i\leqslant Q_i\cong\M_{24}$ and $p\nmid24$ give $p\in\{5,7,11,23\}$. Suppose that $r=3$. Then $24^3\mid |R_i|$. By the classifications of vertex stabilizers of connected arc-transitive graphs of valency 5, 7, and prime valency greater than 7 in \cite[Theorem 1.1]{GF2012}, \cite[Theorem 1.1]{GLH2016} and \cite[Theorems 1.1 and 1.2]{LLL2018}, respectively, none of the possible vertex stabilizers $R_i$ with $p\in\{5,7,11,23\}$ has order both divisible by $24^3$ and dividing $|\M_{24}|$. Thus $r=2$ and so $24^2\mid |R_i|$. Suppose first that $p=5$. By \cite[Theorem 1.1]{GF2012}, together with $24^2\mid |R_i|$ and $|R_i|\mid|\M_{24}|$, the possible vertex stabilizers are $R_i\in\{\Sy_4\times\Sy_5, \AGL_2(4),\AGammaL_2(4), \ZZ_2^6\rtimes\GammaL_2(4)\}$. Suppose next that $p=7$. By \cite[Theorem 1.1]{GLH2016}, the conditions $24^2\mid |R_i|$ and $|R_i|\mid|\M_{24}|$ leave only $R_i\in\{\PSL_3(2)\times\Sy_4, \ZZ_2^6\rtimes(\SL_2(2)\times\SL_3(2))\}$. For $p=11$, the vertex-stabilizer classification in \cite[Theorems 1.1 and 1.2]{LLL2018} contains no group whose order is divisible by $24^2$ and divides $|\M_{24}|$, excluding this possibility. Finally, suppose that $p=23$. The same classification, together with $24^2\mid |R_i|$ and $|R_i|\mid|\M_{24}|$, gives $R_i\cong\M_{23}$. On the other hand, $L=\overline{G}L_{\overline{\a}}$ gives $Q_i=T_iR_i$, and hence $|R_i:R_i\cap T_i|=|Q_i:T_i|=24$. This would give a subgroup of index $24$ in $R_i\cong\M_{23}$, which does not exist by \cite{Atlas} and so $p\neq23$. This yields row 5 of Table \ref{tab:PA-final}.

Combining the above arguments, we obtain that either $\overline{G}\unlhd\overline{X}$, or $(Q,T,r)$ is one of the cases listed in Table \ref{tab:PA-final}. This completes the proof. \qed

\begin{table}[htbp]
\centering
\renewcommand{\arraystretch}{1.3}
\setlength{\tabcolsep}{8pt}
\caption{The possibilities for $(Q,T,r)$.}
\label{tab:PA-final}
{\scriptsize
\begin{tabular}{|c|c|c|p{7cm}|}
\hline
$Q$ & $T$ & $r$ & Remark\\
\hline
$\A_c$ & $\A_k$ & $r\geqslant2$ & $5\leqslant k<c$, $(k,c]$ contains no prime, $d=c!/k!$, $rv_z(d)\leqslant v_z(|\A_c|)$ for every prime $z\mid d$\\
\hline
$\PSp_{2m}(q)$ & $\POmega^-_{2m}(q)$ & $2$ & $m\geqslant4$ is even and $q$ is even\\
\hline
$\POmega_{2m+1}(q)$ & $\POmega^-_{2m}(q)$ & $2$ & $m\geqslant4$ is even and $q$ is odd\\
\hline
$\POmega^+_{2m}(q)$ & $\POmega_{2m-1}(q)$ & $2$ & $m\geqslant4$ is even\\
\hline
$\M_{24}$ & $\M_{23}$ & $2$ & $p=5$ and $L_{\overline{\a}}\in\{\Sy_4\times\Sy_5, \AGL_2(4), \AGammaL_2(4), \ZZ_2^6\rtimes\GammaL_2(4)\}$, or $p=7$ and $L_{\overline{\a}}\in\{\PSL_3(2)\times\Sy_4, \ZZ_2^6\rtimes(\SL_2(2)\times\SL_3(2))\}$.\\
\hline
\end{tabular}}
\end{table}

\begin{lemma}\label{lem:twisted-wreath-large}
Suppose that $\overline{X}$ is of twisted wreath action type. Then $\overline{G}=\soc(\overline{X})\unlhd\overline{X}$, and $\Ga_N$ is an $\overline{X}$-normal Cayley graph of $\overline{G}$.
\end{lemma}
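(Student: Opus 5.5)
The argument will mirror Lemma \ref{twisted wreath action}, applied now to the quotient $\Ga_N$ and the group $\overline{X}$.

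\textbf{Step 1: $\overline{G}$ lies in the socle.} By Lemma \ref{lem:quotient-reduction}, $\overline{X}$ is quasiprimitive on $V(\Ga_N)$ and $\overline{G}\cong T^\ell$ is transitive on $V(\Ga_N)$. Here $\ell\geqslant2$ and $T$ is nonabelian. Since $\overline{X}$ is of twisted wreath type, $L=\soc(\overline{X})\cong Q^r$ is nonabelian and regular on $V(\Ga_N)$. Lemma \ref{subgroup} then gives $\overline{G}\leqslant L$.

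\textbf{Step 2: $\overline{G}$ equals the socle.} Since $\overline{G}$ is transitive, $|\overline{G}|\geqslant|V(\Ga_N)|=|L|$. Combined with $\overline{G}\leqslant L$, this forces $\overline{G}=L$. Hence $\overline{G}=\soc(\overline{X})\unlhd\overline{X}$, and $\overline{G}$ is regular on $V(\Ga_N)$.

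\textbf{Step 3: the Cayley graph structure.} Regularity makes $\Ga_N$ a Cayley graph $\Cay(\overline{G},\overline{S})$, with vertices identified with elements of $\overline{G}$ via the orbit of $\overline{\a}$. Connectivity of $\Ga_N$ follows from the connectivity of $\Ga$, since $\Ga_N$ is its quotient. Because $\overline{G}\unlhd\overline{X}\leqslant\Aut(\Ga_N)$, the graph $\Ga_N$ is $\overline{X}$-normal. We also have $\overline{X}=\overline{G}\rtimes\overline{X}_{\overline{\a}}$ with $\overline{X}_{\overline{\a}}\leqslant\Aut(\overline{G},\overline{S})$, using $\N_{\Aut(\Ga_N)}(\overline{G})=\overline{G}\rtimes\Aut(\overline{G},\overline{S})$.

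If desired, one can add the same remark as in Lemma \ref{twisted wreath action}: by arc-transitivity and odd valency, $\overline{S}$ consists of involutions. The argument needs $\Ga_N$ to have valency $p$, which holds because $\Ga$ is an $N$-cover of $\Ga_N$ by Lemma \ref{lem:quotient-reduction}(i).

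\textbf{Main obstacle.} The only delicate point is confirming that Lemma \ref{subgroup} applies. This requires the socle of $\overline{X}$ to be nonabelian, which is part of the definition of twisted wreath type, and $\overline{G}$ to be a nonabelian characteristically simple transitive subgroup, which Lemma \ref{lem:quotient-reduction} supplies. The faithfulness of $\overline{X}$ on $V(\Ga_N)$ is also part of Lemma \ref{lem:quotient-reduction}. Everything else is a routine order comparison.
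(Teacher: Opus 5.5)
Your proposal is correct and follows essentially the same route as the paper: Lemma \ref{subgroup} places $\overline{G}$ inside the regular socle $\soc(\overline{X})$, transitivity of $\overline{G}$ forces $\overline{G}=\soc(\overline{X})\unlhd\overline{X}$, and this normality makes $\Ga_N$ an $\overline{X}$-normal Cayley graph of $\overline{G}$. Your additional remarks (the explicit order comparison, the decomposition $\overline{X}=\overline{G}\rtimes\overline{X}_{\overline{\a}}$, and the involution property of $\overline{S}$) are valid, but they go beyond what the paper's short proof records.
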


\pf\, The twisted wreath action type of $\overline{X}$ implies that $\soc(\overline{X})$ is regular on $V(\Ga_N)$. By Lemma \ref{subgroup}, $\overline{G}\leqslant \soc(\overline{X})$. The transitivity of $\overline{G}$ on $V(\Ga_N)$, together with the regularity of $\soc(\overline{X})$, gives $\overline{G}=\soc(\overline{X})\unlhd\overline{X}$. Hence $\Ga_N$ is an $\overline{X}$-normal Cayley graph of $\overline{G}$, completing the proof. \qed

The preceding lemmas now allow us to complete the proof of Theorem \ref{thm:non-quasiprimitive}.

\begin{proof}[\textbf{Proof of Theorem \ref{thm:non-quasiprimitive}}]
Let $\Ga=\Cay(G,S)$ be a connected $p$-valent $X$-arc-transitive Cayley graph of $G\cong T^k$, where $k\geqslant2$, each $T_i\cong T$ is simple, $p$ is an odd prime, and $G<X\leqslant\Aut(\Ga)$. Suppose that $X$ is not quasiprimitive on $V(\Ga)$, and let $N$ be a maximal intransitive normal subgroup of $X$. Set $\overline{X}=X/N$ and $\overline{G}=GN/N$. Now let $\soc(\overline{X})\cong Q^r$ for some $r\geqslant1$ and simple group $Q$. 
By Lemma \ref{lem:quotient-reduction}, the group $\overline{X}$ is quasiprimitive on $V(\Ga_N)$ and $\overline{G}\cong T^\ell$ for some $1\leqslant\ell\leqslant k$. 

Assume first that $G$ is abelian. Lemma \ref{lem:quotient-reduction} (ii) gives Theorem \ref{thm:non-quasiprimitive} (1), and whenever $\Ga_N\ncong\K_2$, the graph $\Ga$ is an $N$-cover of $\Ga_N$.

In what follows, assume that $G$ is nonabelian. By Lemma \ref{lem:quotient-reduction} (i), the graph $\Ga$ is an $N$-cover of $\Ga_N$. If $\ell=1$, Lemma \ref{lem:simple-quotient} gives Theorem \ref{thm:non-quasiprimitive} (2) and (3), so it remains to consider $\ell\geqslant2$. The quasiprimitivity of $\overline{X}$ on $V(\Ga_N)$ and \cite[Theorem 1]{P1993} imply that $\overline{X}$ is of affine, almost simple, simple diagonal, product action, or twisted wreath action type.

By Lemma \ref{lem:affine-large}, $\overline{X}$ is not of affine type. For the almost simple type, Lemma \ref{lem:AS-large} shows that $\Ga_N$ is a complete graph, yielding Theorem \ref{thm:non-quasiprimitive} (3). The simple diagonal case is excluded by Lemma \ref{lem:simple-diagonal-large}. Suppose next that $\overline{X}$ is of product action type. By Lemma \ref{lem:product-action}, either $\overline{G}\unlhd\overline{X}$ or $(Q,T,r)$ is listed in Table \ref{tab:PA-final}, which gives Theorem \ref{thm:non-quasiprimitive} (4). Finally, assume that $\overline{X}$ is of twisted wreath action type. Lemma \ref{lem:twisted-wreath-large} implies that $\overline{G}=\soc(\overline{X})$ and that $\Ga_N$ is a Cayley graph of $\overline{G}$, establishing Theorem \ref{thm:non-quasiprimitive} (2).

The above cases exhaust all possibilities for $\overline{X}$, and so Theorem \ref{thm:non-quasiprimitive} follows.
\end{proof}

\section{Proofs of the main theorems}\label{sec5}

Let $\Ga$ be a connected $p$-valent $(X,s)$-transitive Cayley graph of characteristically simple group $G=T_1\times\cdots\times T_k\cong T^k$, where each $T_i\cong T$ is simple, $s$ is a positive integer, $p$ is an odd prime, $k\geqslant 2$ is an integer and $G\leqslant X\leqslant \Aut(\Ga)$. Let $\a\in V(\Ga)$. We are now ready to prove one of the main theorems.

\textbf{Proof of Theorem \ref{thm:main}:}

\begin{proof}
Let $\Ga=\Cay(G,S)$ be a connected $p$-valent $X$-arc-transitive Cayley graph of $G\cong T^k$, where $k\geqslant2$, $T$ is simple, $p$ is an odd prime, and $G<X\leqslant\Aut(\Ga)$. We consider the quasiprimitive and non-quasiprimitive cases separately.

First assume that $X$ is quasiprimitive on $V(\Ga)$. By Theorem \ref{thm:quasiprimitive}, the group $X$ is of affine, almost simple, or twisted wreath action type. In the affine case, $G\cong\soc(X)\cong\ZZ_2^k$. For the almost simple type, $\Ga\cong\K_{2^k}$. Finally, Theorem \ref{thm:quasiprimitive} (3) shows that, in the twisted wreath action case, $\Ga$ is a Cayley graph of $\soc(X)$ with connection set consisting of involutions. These are precisely the quasiprimitive possibilities stated in the theorem.

We next treat the non-quasiprimitive case. Let $N$ be a maximal intransitive normal subgroup of $X$, and put $\overline{X}=X/N$, $\overline{G}=GN/N\cong T^\ell$ and $\soc(\overline{X})\cong Q^r$, where $Q$ is simple and $\ell,\,r\geqslant1$. By Theorem \ref{thm:non-quasiprimitive}, the group $\overline{X}$ is quasiprimitive on $V(\Ga_N)$, and only the affine, almost simple, product action and twisted wreath action types can occur. Moreover, $\Ga$ is an $N$-cover of $\Ga_N$ whenever $\Ga_N\ncong\K_2$.
Theorem \ref{thm:non-quasiprimitive} (1) states that $\overline{G}\cong\ZZ_2^\ell$ and that $\Ga_N$ is a Cayley graph of $\overline{G}$. By Lemma \ref{lem:affine-large}, the affine case occurs only when $\ell=1$, and then Theorem \ref{thm:non-quasiprimitive} (2) gives $\Ga_N\cong\K_8$. The same theorem shows that, in the twisted wreath action case, $\Ga_N$ is a Cayley graph of $\soc(\overline{X})$.
The almost simple type is described by Theorem \ref{thm:non-quasiprimitive} (3), according to which one of the following occurs: $\Ga_N$ is complete, $\overline{G}=T=\soc(\overline{X})\unlhd\overline{X}$, or $(\soc(\overline{X}),T)$ is one of the pairs listed in Tables \ref{tab:simple-nonclassical} and \ref{tab:simple-classical-nonsolv}.
Finally, for the product action type, Theorem \ref{thm:non-quasiprimitive} (4) yields either $\overline{G}=\soc(\overline{X})\unlhd\overline{X}$ or $(Q,T,r)$ listed in Table \ref{tab:PA-final}.

The quasiprimitive and non-quasiprimitive cases exhaust all possibilities for $X$, completing the proof.
\end{proof}

We now consider the cubic case. Let $\Ga=\Cay(G,S)$ be a connected cubic symmetric Cayley graph of $G=T_1\times\cdots\times T_k\cong T^k$, where $k\geqslant2$ and each $T_i\cong T$ is a finite nonabelian simple group. Let $A=\Aut(\Ga)$ and let $\a\in V(\Ga)$. Then $G$ acts regularly on $V(\Ga)$ and $\Ga$ is $A$-arc-transitive. The second main theorem is proved next.

\textbf{Proof of Theorem \ref{cubic}:}

\begin{proof}
Let $K=\Core_A(G)$. If $K=G$, then $G\unlhd A$, and the result follows. Suppose next that $K<G$. The relation $K\unlhd G=T_1\times\cdots\times T_k$, together with the simplicity of $T_i$, allows us to apply Lemma \ref{obs}. After relabeling the direct factors if necessary, we may assume that $K=T_1\times\cdots\times T_r$ for some $0\leqslant r<k$. Hence $G/K\cong T^{k-r}$ and  $|G:K|\geqslant|T|\geqslant60$.

The regular action of $G$ on $V(\Ga)$ implies that $K$ is semiregular and has $|G:K|>2$ orbits. By Lemma \ref{NQ}, $\Ga$ is a $K$-cover of $\Ga_K$, the group $G/K$ acts regularly on $V(\Ga_K)$, and $A/K\leqslant\Aut(\Ga_K)$. Moreover, $\Core_{A/K}(G/K)=\Core_A(G)/K=1$. By \cite[Section 2]{DM1980}, $\Ga$ is $(A,s)$-transitive for some $1\leqslant s\leqslant5$. The inequality $|G:K|>2$, together with \cite[Corollary 2.2]{LL2009}, gives $s\geqslant2$ and shows that $\Ga_K$ is a connected cubic core-free $(A/K,s)$-transitive Cayley graph of $G/K$. By \cite[Theorem 1.1 and Table 1]{LL2009}, $G/K\cong\A_{47}$, $T\cong\A_{47}$ and $k-r=1$, and $\Ga_K$ is one of the two cubic $5$-arc-regular Cayley graphs of $\A_{47}$ with $\Aut(\Ga_K)\cong\A_{48}$.

After relabeling the direct factors, we may assume that $K=T_1\times\cdots\times T_{k-1}\cong\A_{47}^{k-1}$ and $G=K\times T_k$, where $T_k\cong\A_{47}$. The equality $\Core_{A/K}(G/K)=1$, together with $G/K\neq1$, implies that $G/K<A/K$. On the other hand, $A/K\leqslant\Aut(\Ga_K)\cong\A_{48}$ and $G/K\cong\A_{47}$. The subgroup $\A_{47}$ is maximal in $\A_{48}$, and so $A/K\cong\A_{48}$.

Let $C=C_A(K)$. The decomposition $G=K\times T_k$ gives $T_k\leqslant C$. The normality of $K$ in $A$ implies $C\unlhd A$, and hence $CK/K\unlhd A/K\cong\A_{48}$. The subgroup $CK/K$ contains $T_kK/K\cong\A_{47}$ and is nontrivial. The simplicity of $\A_{48}$ yields $CK/K=A/K$, and so $CK=A$. Moreover, $C\cap K=C_K(K)=Z(K)=1$. By the definition of $C$, the subgroups $C$ and $K$ centralize each other. It follows that $A=K\times C$ and $C\cong A/K\cong\A_{48}$.

The regularity of $G=K\times T_k$ on $V(\Ga)$ gives $|A_\a|=|A:G|=|C:T_k| =|\A_{48}:\A_{47}|=48$. By \cite[Section 2]{DM1980}, $\Ga$ is $(A,5)$-arc-regular. By the universal description of finite cubic 5-arc-regular groups in \cite{CL1989}, the group $A$ is a quotient of the universal group $G_5$. Thus there is an epimorphism $\theta:G_5\to A$. Since $k\geqslant2$, the group $K\cong\A_{47}^{k-1}$ contains a direct factor $T_1\cong\A_{47}$. Let $\pi_1:A=K\times C\to T_1$ be the natural projection and let $\varphi=\pi_1\theta$. Then $\varphi:G_5\to\A_{47}$ is an epimorphism.

Let $H_1\cong\A_{46}$ be a point stabilizer in the natural degree-47 action of $\A_{47}$ and let $H=\varphi^{-1}(H_1)$. Then $|G_5:H|=|\A_{47}:H_1|=47$. Since $H_1$ is core-free in $\A_{47}$, we have $\Core_{G_5}(H)=\ker\varphi$. Thus the permutation group induced by $G_5$ on the right cosets of $H$ is isomorphic to $G_5/\ker\varphi\cong\A_{47}$. Listing \ref{code:magma-5-arc} verifies in Magma \cite{magma} that no subgroup of index $47$ in $G_5$ has a coset action with image isomorphic to $\A_{47}$. This contradiction shows that $K<G$ is impossible. Hence $K=G$ and $G\unlhd A$. Thus $\Ga$ is a normal Cayley graph.
\end{proof}

\appendix

\section{Magma computations}\label{app:magma}

\subsection{\texorpdfstring{The factorization $\A_6=\A_5\Sy_3$}{The factorization A6=A5S3}}\label{app:A6}

We first verify that $\A_6$ admits no factorization $\A_6=\A_5\Sy_3$. Since $|\A_5||\Sy_3|=60\cdot6=360=|\A_6|$, such a factorization exists if and only if there are subgroups $H,K\leqslant\A_6$ with $H\cong\A_5$, $K\cong\Sy_3$ and $H\cap K=1$. The following Magma computation shows that no such pair exists.

\begin{lstlisting}[basicstyle=\ttfamily\small,breaklines=true,columns=fullflexible,caption={Verification that $\A_6$ admits no factorization $\A_6=\A_5\Sy_3$.},label={lst:A6-factorization}]
A6 := AlternatingGroup(6);
A5 := AlternatingGroup(5);
S3 := SymmetricGroup(3);
subs60 := [ R`subgroup : R in Subgroups(A6 : OrderEqual := 60) ];
subs6 := [ R`subgroup : R in Subgroups(A6 : OrderEqual := 6) ];
fac := [ <H,K> : H in subs60, K in subs6 | IsIsomorphic(H,A5) and IsIsomorphic(K,S3) and Order(H meet K) eq 1 ];
#fac;
// Output:
// 0
\end{lstlisting}

Thus $\A_6$ has no factorization $\A_6=\A_5\Sy_3$.

\subsection{\texorpdfstring{The universal group $G_5$}{The universal group G5}}\label{app:G5}

We verify that the universal group $G_5$ for cubic $5$-arc-regular graphs has no quotient isomorphic to $\A_{47}$. We use the presentation of $G_5$ given in \cite{CL1989}. Suppose that there is an epimorphism $\varphi:G_5\to\A_{47}$. Let $H_1\cong\A_{46}$ be a point stabilizer in the natural degree-$47$ action of $\A_{47}$ and let $H=\varphi^{-1}(H_1)$. Then $|G_5:H|=47$, and the coset action of $G_5$ on the right cosets of $H$ has image $G_5/\Core_{G_5}(H)\cong\A_{47}$. It is enough to consider the coset actions arising from subgroups of index $47$ in $G_5$. The following computation was carried out in Magma \cite{magma}.

\begin{lstlisting}[basicstyle=\ttfamily\small,breaklines=true,columns=fullflexible,caption={Verification that $G_5$ has no quotient isomorphic to $\A_{47}$.},label={code:magma-5-arc}]
F<h,p,q,r,s,a> := FreeGroup(6);
G5 := quo< F | h^3, p^2, q^2, r^2, s^2, a^2, p*q*p^-1*q^-1, p*r*p^-1*r^-1, p*s*p^-1*s^-1, q*r*q^-1*r^-1, q*s*q^-1*s^-1, p*q*(r*s)^2, h^-1*p*h*p^-1, h^-1*q*h*r, h^-1*r*h*p*q*r, (s*h)^2, a*p*a*q, a*r*a*s >;
L := LowIndexSubgroups(G5, <47,47>);
images := [ CosetImage(G5,H) : H in L ];
orderA47 := Factorial(47) div 2;
hasA47quotient := exists{ P : P in images | Order(P) eq orderA47 };
hasA47quotient;
// Output:
// false
\end{lstlisting}

The command \texttt{LowIndexSubgroups} returns representatives of the conjugacy classes of subgroups of index 47 in $G_5$, and \texttt{CosetImage} constructs the image of each associated coset action. The final output is \texttt{false}, so none of these images has order $|\A_{47}|=47!/2$. Hence no degree-47 coset action has image isomorphic to $\A_{47}$, and $G_5$ has no quotient isomorphic to $\A_{47}$.

\vskip 10pt

\noindent{\bf Declaration of competing interest}

The authors declare that they have no known competing financial interests or personal relationships that could have appeared to influence the work reported in this paper.
The authors sincerely thank Dr. Binzhou Xia for their valuable comments and suggestions.

\end{document}